\documentclass[11pt]{amsart}

\usepackage{amsmath,amssymb,amscd,amsthm,graphicx,enumerate}

\usepackage{hyperref}
\usepackage{color}
\hypersetup{breaklinks=true}

\numberwithin{equation}{section}
\setcounter{secnumdepth}{2}
\setcounter{tocdepth}{2}

\setlength{\parskip}{1ex}

\theoremstyle{plain}

\newtheorem{theorem}{Theorem}[section]
\newtheorem{proposition}[theorem]{Proposition}
\newtheorem{lemma}[theorem]{Lemma}
\newtheorem{corollary}[theorem]{Corollary}
\newtheorem{claim}[theorem]{Claim}

\theoremstyle{remark}
\newtheorem{remark}[theorem]{Remark}

\theoremstyle{definition}
\newtheorem{definition}[theorem]{Definition}

\newcommand{\eps}{\varepsilon}

\begin{document}

\title[The blowdown of ancient noncollapsed flows]{The blowdown of ancient noncollapsed mean curvature flows}

\author{Wenkui Du, Robert Haslhofer}

\begin{abstract}
In this paper, we consider ancient noncollapsed mean curvature flows $M_t=\partial K_t\subset \mathbb{R}^{n+1}$ that do not split off a line. It follows from general theory that the blowdown of any time-slice, $\lim_{\lambda \to 0} \lambda K_{t_0}$, is at most $n-1$ dimensional. Here, we show that the blowdown is in fact at most $n-2$ dimensional. Our proof is based on fine cylindrical analysis, which generalizes the fine neck analysis that played a key role in many recent papers. Moreover, we show that in the uniformly $k$-convex case, the blowdown is at most $k-2$ dimensional.
This generalizes the recent results from Choi-Haslhofer-Hershkovits \cite{CHH_wing} to higher dimensions, and also has some applications towards the classification problem for singularities in 3-convex mean curvature flow.
\end{abstract}

\maketitle

\tableofcontents

\section{Introduction}

In the analysis of mean curvature flow it is crucial to understand ancient noncollapsed flows. We recall that a mean curvature flow $M_t$ is called ancient if it is defined for all $t\ll 0$, and noncollapsed if it is mean-convex and there is an $\alpha>0$ so that every point $p\in M_t$ admits interior and exterior balls of radius at least $\alpha/H(p)$, c.f. \cite{ShengWang,Andrews_noncollapsing,HaslhoferKleiner_meanconvex} (in fact, by \cite{Brendle_inscribed,HK_inscribed} one can always take $\alpha=1$). By the work of White \cite{White_size,White_nature,White_subsequent} and by \cite{HH_subsequent} it is known that all blowup limits of mean-convex mean curvature flow are ancient noncollapsed flows. More generally, by Ilmanen's mean-convex neighborhood conjecture \cite{Ilmanen_problems}, which has been proved recently in the case of neck-singularities in \cite{CHH,CHHW}, it is expected even without mean-convexity assumption that all  blowup limits near any cylindrical singularity are ancient noncollapsed flows.

In this paper, we consider the asymptotic structure of ancient noncollapsed flows, specifically the blowdown of any time-slice. To describe this, denote by $M_t=\partial K_t\subset\mathbb{R}^{n+1}$ any ancient noncollapsed mean curvature flow. Recall that such flows are always convex thanks to \cite[Theorem 1.10]{HaslhoferKleiner_meanconvex}. We can assume without essential loss of generality that the solution does not split off a line (otherwise we can reduce the problem to a lower dimensional one). Equivalently, this means that $M_t$ is strictly convex.

\begin{definition}[blowdown]\label{def_blowdown}
Given any time $t_0$, the \emph{blowdown} of $K_{t_0}$ is defined by
\begin{equation}
\check{K}_{t_0}:=\lim_{\lambda\to 0} \lambda\cdot K_{t_0}.
\end{equation}
\end{definition}
It is easy to see that the blowdown exists (in fact it is given as a decreasing intersection of convex sets) and is a cone, i.e. satsifies $\lambda \check{K}_{t_0}= \check{K}_{t_0}$ for all $\lambda >0$.
The blowdown captures important information about the asymptotic structure of the time slices. By general theory  the blowdown is at most $n-1$ dimensional. Indeed, the blowdown cannot be bigger than the singular set in a mean convex flow, which is at most $n-1$ dimensional by the deep theory of White \cite{White_size,White_nature} (see also \cite{HaslhoferKleiner_meanconvex}). Here, we improve this to:

\begin{theorem}[blowdown of ancient noncollapsed flows]\label{thm_main}
Let $M_t=\partial K_t\subset\mathbb{R}^{n+1}$, where $n\geq 3$, be an ancient noncollapsed mean curvature flow that does not split off a line. Then the blowdown of any time slice is at most $n-2$ dimensional, namely
\begin{equation}
\dim \check{K}_{t_0} \leq n-2.
\end{equation}
\end{theorem}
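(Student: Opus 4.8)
The strategy is to argue by contradiction: suppose the blowdown $\check K_{t_0}$ is exactly $(n-1)$-dimensional. Since the blowdown is a convex cone that doesn't contain a line (the flow doesn't split off a line, so $M_{t_0}$ is strictly convex and $\check K_{t_0}$ has empty interior once it's lower-dimensional... actually we should be careful: $\check K_{t_0}$ is $(n-1)$-dimensional means it spans an $(n-1)$-plane but is a proper cone therein). After an ambient rotation we may assume $\check K_{t_0}\subset \mathbb{R}^{n-1}\times\{0\}^2$ with nonempty relative interior, so that far out in $K_{t_0}$ the flow looks, in a rescaled sense, like it is collapsed along the $e_n,e_{n+1}$ directions — which forces a cylindrical structure $\mathbb{R}^{n-1}\times S^1$ (more precisely a bowl- or cylinder-type neck $N^{n-1}\times$(bounded)) at large scales. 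This is the regime where the \emph{fine cylindrical analysis} announced in the abstract must be developed: one sets up, for each point flowing to infinity, a rescaled sequence converging to the round shrinking cylinder $\mathbb{R}^{n-1}\times S^1(\sqrt{2})$ (times $\mathbb{R}$ after possibly splitting), and then studies the graphical deviation $u$ over the cylinder by projecting onto the eigenspaces of the Ornstein--Uhlenbeck-type Jacobi operator $\mathcal{L}=\Delta_{S^1} + \tfrac12\Delta_{\mathbb R^{n-1}} - \tfrac{y\cdot\nabla}{2} + \tfrac12$ (or the appropriate normalization), exactly in the spirit of the Angenent--Daskalopoulos--Sesum and Choi--Haslhofer--Hershkovits fine neck analysis, now with the neck factor being a circle and the base being $(n-1)$-dimensional rather than $1$-dimensional.

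The key steps, in order: (1) Show that a $(n-1)$-dimensional blowdown forces, at large scales, convergence to the shrinking round cylinder $\mathbb{R}^{n-1}\times S^1$ along a sequence of points $p_i\to\infty$ in $M_{t_0}$; this uses convexity, the noncollapsing estimate, and the global convergence/compactness theory for ancient noncollapsed flows (Huisken--Sinestrari, Haslhofer--Kleiner), together with the dimension-reduction that the blowdown bounds the "width" in the collapsed directions. (2) Develop the fine cylindrical expansion: decompose the graph function $u$ in the parabolic region into its components in the kernel (the $0$-eigenspace, spanned in the $S^1$ variable by $1$ and by the rotations $\cos\theta,\sin\theta$, tensored with Hermite polynomials of degree $\le 2$ in the $\mathbb R^{n-1}$ variable) versus the negative/positive eigenspaces, and run a Merle--Zaag-type ODE alternative to show the neutral mode dominates. (3) Identify the dominant neutral mode: rotational modes are killed by the no-line-splitting / strict convexity (they would correspond to the cylinder drifting off-axis in a way incompatible with being the blowdown of a convex body not splitting a line), so the dominant behavior is governed by the quadratic Hermite modes in the $(n-1)$ base directions; analyzing the resulting ODE system shows the cylinder's axis must asymptotically be $(n-2)$-dimensional (the quadratic form in the expansion has at most $n-2$ "flat" directions), contradicting the assumption that the blowdown — hence the set of flat directions — is $(n-1)$-dimensional. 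Finally, (4) assemble the contradiction and, for the uniformly $k$-convex statement, repeat the argument with $S^1\times\mathbb R^{n-1}$ replaced by $S^{n-k}\times\mathbb R^{k-1}$.

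I expect the main obstacle to be Step (2)–(3): setting up the fine cylindrical analysis rigorously when the neck cross-section is a circle rather than a sphere, and when the base is genuinely higher-dimensional. Unlike the $S^{n-1}$ neck case, the $S^1$ factor has a two-dimensional space of lowest rotational modes and a nontrivial interaction between these and the translational/graphical-reparametrization modes, so pinning down which neutral modes actually survive — and proving the requisite decay/growth estimates uniformly as $p_i\to\infty$ (a spectral Merle--Zaag argument on a noncompact base) — is delicate. The technical heart is showing that the coefficient matrix of the quadratic Hermite modes, which encodes the second fundamental form of the blowdown cone's "profile," is forced by convexity and ancientness to have rank $\ge 2$ in the two collapsed directions, i.e. that the flow cannot be asymptotic to a cylinder over an $(n-1)$-dimensional flat set. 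One should also double-check the borderline regularity: a priori the blowdown could be $(n-1)$-dimensional but \emph{degenerate} (e.g. a half-space minus lower order), and the argument must handle the case where convergence to the round cylinder is only along a subsequence and with a possibly drifting axis, which is where the fine (as opposed to coarse) analysis is essential.
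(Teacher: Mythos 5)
Your outline correctly identifies the reduction to a tangent flow $\mathbb{R}^{\ell}\times S^{n-\ell}(\sqrt{2(n-\ell)t})$ with $\ell=n-1$, the use of the renormalized flow, the graphical expansion over the cylinder, the spectral decomposition for the Ornstein--Uhlenbeck operator, and the Merle--Zaag ODE alternative. You also correctly sense that the heart of the neutral-mode case is that the limiting quadratic form $Q$ must be semi-definite and that the blowdown has to sit inside its kernel, which forces $\dim\check K\le \ell-1$. (In the paper this is made precise using Brunn's concavity principle applied to the cross-sectional areas $\mathcal A(x',\tau)$, which is worth flagging since you do not name a mechanism for showing the quadratic form is sign-definite and that ``non-flat'' directions of $Q$ are compact directions of $K$.)

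However, there is a genuine gap. You write that the Merle--Zaag argument ``shows the neutral mode dominates,'' and your entire contradiction is derived in the neutral-mode regime. Merle--Zaag only gives a \emph{dichotomy}: either the neutral eigenfunctions dominate, or the unstable eigenfunctions dominate. Nothing in the hypotheses rules out the unstable case; indeed, flows such as $\mathbb{R}^{k-2}\times\mathrm{Bowl}_{n-k+2}$ sit precisely in the unstable-mode regime, so this case is nonvacuous and has to be handled. In the paper this is roughly half of the work: one proves a fine cylindrical theorem establishing, for \emph{every} space-time center $X$, a sharp expansion $\check u^X = e^{\tau/2}(a_1x_1+\cdots+a_\ell x_\ell)+o(e^{\tau/2})$ with a center-independent nonvanishing vector $(a_1,\ldots,a_\ell)$; and then, assuming $\dim\check K=\ell$, one selects two rays of twice-differentiability in $\partial\check K$, passes to limit flows along them that split off $\ell-1$ lines, invokes the Brendle--Choi classification to see these limits are bowls times planes translating in different directions, and contradicts the uniqueness of the fine cylindrical vector. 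None of this appears in your proposal. Establishing the fine cylindrical theorem itself also requires ingredients you do not mention: an $\mathrm{O}(\ell)\times\mathrm{O}(n+1-\ell)$-symmetric shrinker foliation (built from the ADS profiles) to run barrier and calibration arguments, an inverse Poincar\'e inequality, and an improved graphical radius estimate.

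A secondary issue: you attribute the killing of the rotational/linear-in-$x_J$ neutral modes to ``no-line-splitting / strict convexity.'' That is not the mechanism. In the paper one chooses, via the implicit function theorem, a time-dependent rotation $S(\tau)\in\mathrm{SO}(n+1)$ so that the truncated graph function is $\mathcal H$-orthogonal to the rotation vector fields $\langle Ax,\nu_\Gamma\rangle$, and then one tracks the modulus $|A(\tau)|=|S'(\tau)S(\tau)^{-1}|$ as a small error term. Strict convexity does not by itself eliminate the $x_ix_J$ contributions from the neutral space. So even within the neutral-mode branch your argument as written is not complete.
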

This generalizes the main result from \cite{CHH_wing}, where Choi, Hershkovits and the second author ruled out the potential scenario ancient wing-like noncollapsed flows in $\mathbb{R}^4$. Our theorem shows that the blowdown is always at least $1$ dimension smaller than what one gets from the general bound for the dimension of the singular set.  In particular, it implies:
\begin{corollary}[nonexistence of ancient asymptotically conical flows] There does not exist any ancient noncollapsed solution of the mean curvature flow in $\mathbb{R}^{n+1}$ that is asymptotic to an $(n-1)$-dimensional cone.
\end{corollary}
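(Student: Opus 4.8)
\emph{Proof proposal.} The plan is to argue by contradiction: suppose $\dim\check{K}_{t_0}=n-1$. Since the blowdown is a convex cone, after a rotation it spans $\mathbb{R}^{n-1}\times\{0\}\subset\mathbb{R}^{n+1}$. The first step is to identify the asymptotic geometry. Using that $K_{t_0}$ is convex, so that near a point far out in the relative interior of the cone it is essentially flat in the $n-1$ cone directions and confined to a thin two-dimensional slot in the complementary directions, that the noncollapsedness makes this two-dimensional cross-section uniformly noncollapsed, and that the round circle is the only compact embedded self-shrinker in the plane, I would show that at the natural scale $M_{t_0}$ is arbitrarily well approximated there by the thin round shrinking cylinder $\mathbb{R}^{n-1}\times S^1$. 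A point-selection argument combined with White's regularity theory and Haslhofer--Kleiner compactness should upgrade this to a uniform statement on an unbounded region, so that after renormalization that region is an arbitrarily small normal graph $u(y,\theta,\tau)$ over the cylinder $\mathbb{R}^{n-1}\times S^1(\sqrt 2)$.

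The heart of the matter is the \emph{fine cylindrical analysis} of $u$, which generalizes the fine neck analysis of \cite{CHH,CHH_wing} to a cylinder with $n-1$ axial directions instead of one. The graph function solves a parabolic equation whose linearization is the Ornstein--Uhlenbeck type operator $\mathcal{L}=\partial_y^2-\tfrac12\,y\cdot\partial_y+\Delta_{S^1}+1$ on the Gaussian-weighted $L^2$ of $\mathbb{R}^{n-1}\times S^1(\sqrt2)$; its positive eigenspaces are spanned exactly by the modes generated by the dilations and translations of the cylinder (the constant, the axial linear functions $y_i$, and the first circle harmonics $\cos\theta,\sin\theta$), and, once one normalizes the radius, position, axis and framing of the approximating cylinder, the zero eigenspace is spanned by the degree-two Hermite polynomials $y_iy_j-2\delta_{ij}$, i.e.\ a quadratic form $Q$ on $\mathbb{R}^{n-1}$.

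I would then run a Merle--Zaag type ODE argument for the projections of $u$ onto the unstable, neutral and stable subspaces. Ancientness and noncollapsedness rule out residual unstable growth after the symmetries have been normalized, so the evolution is driven either by the stable part --- in which case $u$ decays fast enough that $M_t$ is, by unique continuation among ancient flows, the exact shrinking cylinder $\mathbb{R}^{n-1}\times S^1$ --- or by the neutral part, in which case the fine cylindrical analysis yields a precise asymptotic expansion governed by $Q$. In every scenario the outcome is that, near spatial infinity, $M_t$ is asymptotic to a product $\mathbb{R}^{d}\times\Sigma^{n-d}$ with $\Sigma$ a compact generalized ancient oval and $0\le d\le n-1$, where $d=n-1-\mathrm{rank}\,Q$: if $d\geq 1$ the flow splits off a line, contradicting the standing hypothesis, and if $d=0$ the flow is compact, so $\dim\check{K}_{t_0}=0\neq n-1$, again a contradiction. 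This establishes $\dim\check{K}_{t_0}\le n-2$. The uniformly $k$-convex refinement follows the same blueprint with the cylinder $\mathbb{R}^{k-1}\times S^{n-k+1}$; for $k<n$ the cross-sectional sphere has dimension at least two and the classical fine neck analysis already suffices.

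I expect the principal obstacle to be the fine cylindrical analysis itself in two respects: first, propagating the pointwise cylindrical approximation to a \emph{uniform} one over an unbounded region, with the estimates on $u$ needed even to set up the graph globally; and second, carrying out the spectral iteration with $n-1$ axial variables, in particular simultaneously tracking all eigendirections of $Q$ and normalizing the rotation-type neutral modes $y_i\cos\theta,\,y_i\sin\theta$, before one can read off the product structure at infinity. Deducing an honest line-splitting from the asymptotic product (via convexity and the strong maximum principle) and the remaining bookkeeping should be routine by comparison with the existing literature.
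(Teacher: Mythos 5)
In the paper this corollary is an immediate consequence of Theorem \ref{thm_main}: a flow asymptotic to an $(n-1)$-dimensional cone has an $(n-1)$-dimensional blowdown, which the theorem forbids (after splitting off lines if necessary). Your proposal instead attempts to re-prove the main theorem, and while its overall architecture (renormalization over $\mathbb{R}^{n-1}\times S^1$, Gaussian $L^2$ spectral decomposition, Merle--Zaag) matches the paper's, it contains a fatal error in the mode analysis. For an ancient flow one runs Merle--Zaag as $\tau\to-\infty$, so the dichotomy is \emph{neutral-dominant versus unstable-dominant}; the stable mode never dominates backward in time, and your claim that ``ancientness and noncollapsedness rule out residual unstable growth after the symmetries have been normalized'' is false. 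The fine-tuning rotation $S(\tau)$ only kills the rotational neutral modes $x_ix_J$; the unstable modes $1,x_1,\dots,x_{n+1}$ cannot be normalized away for all centers simultaneously, and the unstable-dominant case is genuinely realized (e.g.\ by $\mathbb{R}^{k-2}\times\mathrm{Bowl}$). Ruling out an $(n-1)$-dimensional blowdown in that case is the hardest part of the paper: one needs the fine cylindrical theorem (the coefficient vector $(a_1,\dots,a_\ell)$ of the linear unstable modes is nonvanishing and independent of the space-time center), then one selects two rays of twice differentiability on $\partial\check K$ via Alexandrov's theorem, passes to limits that split off $\ell-1$ lines, invokes the Brendle--Choi classification to see two bowl solitons translating in different directions, and contradicts the center-independence of $(a_1,\dots,a_\ell)$. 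Your proposal has no substitute for any of this.

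The neutral-mode branch of your argument is also wrong as stated. You claim the expansion forces $M_t$ to be asymptotic to a product $\mathbb{R}^d\times\Sigma$ with $d=n-1-\mathrm{rank}\,Q$, splitting off a line whenever $d\geq1$. That is not true: a rank-one $Q$ (e.g.\ the halfline-blowdown case of Corollary \ref{cor_3con}) does not produce any line splitting. The correct conclusion, and the one the paper proves, is the inclusion $\check K\subseteq\ker Q$ with $Q$ nontrivial and semi-negative definite, established not by a splitting argument but by Brunn's concavity principle applied to the cross-sectional areas $\mathcal{A}(x',\tau)$: directions outside $\ker Q$ exhibit inward quadratic bending, so the cross-sectional area along such a ray would be decreasing and vanish at finite distance, leaving the ray nowhere to go. This already gives $\dim\check K\leq\ell-1=n-2$ in the neutral case. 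Finally, note that the paper's reduction to the cylinder goes through the tangent flow at $-\infty$ (Huisken monotonicity plus Colding--Minicozzi uniqueness), giving $\check K\subseteq\mathbb{R}^\ell\times\{0\}$ and the renormalized flow $\bar M_\tau\to\Gamma$ as $\tau\to-\infty$, rather than through spatial asymptotics of a single time slice as you set it up; and in the $k$-convex case the relevant new difficulty is the $\ell=k-1\geq2$ axial directions, not the dimension of the spherical factor, so the ``classical fine neck analysis'' does not suffice there.
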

For comparison, we recall the basic result that there does not exist any nontrivial ancient noncollapsed flow in $\mathbb{R}^{n+1}$ that is asymptotic to an $n$-dimensional cone, as follows easily from the fact that the asymptotic area ratio vanishes \cite{HaslhoferKleiner_meanconvex}. Our corollary rules out solutions that are asymptotic to $n-1$ dimension cones, i.e. solutions asymptotic to cones whose dimension is smaller by $1$ than the dimension of the evolving hypersurfaces, which is much more subtle. It even rules out flows asymptotic to singular cones.

The dimension bound can be improved further in the uniformly $k$-convex case. Recall that an ancient noncollapsed flow is uniformly $k$-convex if there is a constant $\beta>0$ such that $\lambda_1+\ldots +\lambda_k \geq \beta H$, where $\lambda_1\leq \ldots \leq\lambda_n$ denote the principal curvatures. We prove:

\begin{theorem}[blowdown of ancient $k$-convex flows]\label{thm_main2}
If we assume in addition the flow is uniformly $k$-convex, where $k\geq 3$, then the blowdown of any time slice is at most $k-2$ dimensional.
\end{theorem}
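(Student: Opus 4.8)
The plan is to proceed by contradiction via a \emph{fine cylindrical analysis} at the tangent flow at $-\infty$. Suppose $\dim\check{K}_{t_0}\geq k-1$. The flow admits a tangent flow at $-\infty$, which is a self-shrinker that inherits convexity, noncollapsedness and uniform $k$-convexity from $M_t$ (these are scale invariant and stable under $C^\infty_{\mathrm{loc}}$ limits of rescaled flows); by the classification of convex self-shrinkers it is a round generalized cylinder $\mathbb{R}^j\times S^{n-j}(\sqrt{2(n-j)})$ with $0\leq j\leq n-1$, and uniform $k$-convexity forces $j\leq k-1$, since $\mathbb{R}^j\times S^{n-j}$ is uniformly $k$-convex only when $j\leq k-1$. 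On the other hand, since mean curvature flow shrinks the enclosed region we have $K_{t_0}\subseteq K_t$ for all $t\leq t_0$, and $\tfrac{1}{\sqrt{-t}}K_t$ converges as $t\to-\infty$ to the corresponding solid cylinder $\mathbb{R}^j\times\bar{B}^{\,n-j+1}$; since $\check{K}_{t_0}$ is a cone, this forces $\dim\check{K}_{t_0}\leq j$. Hence either $\dim\check{K}_{t_0}\leq j\leq k-2$, contrary to assumption, or $j=k-1$; so from now on we may assume that the tangent flow at $-\infty$ is $\Gamma:=\mathbb{R}^{k-1}\times S^{n-k+1}(\sqrt{2(n-k+1)})$.

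The core of the proof is the fine cylindrical analysis, generalizing the fine neck analysis of \cite{CHH,CHH_wing} to a cylinder with a higher-dimensional spine. Passing to the rescaled flow $\bar M_\tau$, which converges to $\Gamma$ as $\tau\to-\infty$, I would write $\bar M_\tau$ as a normal graph $u(y,\vartheta,\tau)$ over a region of $\Gamma$ of suitably growing size, and expand $u$ in eigenfunctions of the Jacobi operator $\mathcal{L}=\Delta_{\Gamma}-\tfrac12\,y\cdot\nabla_y+1$ on $\Gamma$. The unstable eigenfunctions are the constant $1$ (eigenvalue $1$) and the functions $y_1,\dots,y_{k-1}$ together with the coordinate functions on $S^{n-k+1}$ (eigenvalue $\tfrac12$); all of these arise from spacetime translations of $\Gamma$ and are removed by recentering in space and time. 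The neutral eigenspace ($\lambda=0$) is spanned by $\{y_iy_j-2\delta_{ij}\}$ and $\{y_i\,\vartheta_a\}$, and all remaining eigenvalues are strictly negative. A Merle--Zaag type ODE argument then shows that, after this normalization, $u$ is governed by the neutral modes, with
\begin{equation}
u(y,\vartheta,\tau)=\frac{y^{\top}Q(\tau)\,y-2\operatorname{tr}Q(\tau)}{|\tau|}+o\!\left(\tfrac{1}{|\tau|}\right),
\end{equation}
where $Q(\tau)$ is a symmetric $(k-1)\times(k-1)$ matrix obeying a Riccati-type ODE and converging to a limit $Q_\infty$; convexity of the $M_t$ pins down its sign as $Q_\infty\leq 0$.

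With this expansion in hand, I would conclude by a dichotomy on the rank of $Q_\infty$. If $Q_\infty$ is negative definite, then $\bar M_\tau$ curves strictly inward in each of the $k-1$ spine directions, at rate $\sim|y|^2/|\tau|$, so it closes up at $|y|\sim\sqrt{|\tau|}$ and is therefore compact once $\tau\ll 0$; hence $M_t$ is compact, $K_{t_0}$ is compact, and $\check{K}_{t_0}=\{0\}$ is $0$-dimensional, contradicting $\dim\check{K}_{t_0}\geq k-1\geq 2$. If instead $Q_\infty$ has a nontrivial kernel vector $e$, then along $e$ the flow is flatter than the generic neutral profile, and a unique-continuation/rigidity argument --- obtained by feeding this extra decay back into the fine cylindrical expansion and iterating in the remaining spine directions --- shows that $M_t$ is invariant under translation in the direction $e$, i.e.\ $M_t$ splits off a line, contradicting the standing hypothesis. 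Either way we obtain a contradiction, so $\dim\check{K}_{t_0}\leq k-2$. (The same argument with $k$ replaced by $n$ also re-proves Theorem~\ref{thm_main}.)

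The main obstacle, as I see it, lies in the fine cylindrical analysis and the rigidity step: setting up the graphical radius and the Merle--Zaag dichotomy when the spine $\mathbb{R}^{k-1}$ has dimension greater than $1$ (so $Q(\tau)$ is a genuine matrix rather than a scalar, as in the classical neck analysis), deriving and solving the ODE for $Q(\tau)$ and establishing its convergence, and, in the degenerate case, upgrading the bound that the flow is flatter than quadratic in the direction $e$ to an honest translation symmetry rather than merely a faster decay rate. Once these are in place, the clean alternative \emph{compact oval versus splits a line} closes the proof.
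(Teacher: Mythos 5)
Your reduction to the tangent flow $\mathbb{R}^{k-1}\times S^{n-k+1}$ and the general Merle--Zaag framework match the paper, but there are two genuine gaps, and the first one skips what is in fact the hardest half of the argument. You claim that the unstable eigenfunctions $1, y_1,\dots,y_{k-1}$ and the sphere coordinates ``all arise from spacetime translations of $\Gamma$ and are removed by recentering in space and time.'' This is false for the spine coordinates $y_1,\dots,y_{k-1}$: translating the cylinder along its axis leaves it invariant, so these modes do \emph{not} come from translations and cannot be normalized away. Indeed $\mathbb{R}\times\mathrm{Bowl}_{n-1}$ has dominant unstable mode with a genuinely nonzero coefficient on the spine coordinate, for every choice of center. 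The paper therefore has to treat the dominant unstable mode as a separate case: it proves a fine cylindrical theorem (the coefficient vector $(a_1,\dots,a_\ell)$ is nonvanishing and \emph{independent of the space-time center}), then rules out $\dim\check K=k-1$ by selecting two rays of twice-differentiability in $\partial\check K$, passing to limits along them that split off $k-2$ lines, invoking the Brendle--Choi classification to identify the limits as $\mathbb{R}^{k-2}\times\mathrm{Bowl}_{n+2-k}$ translating in different directions, and contradicting the uniqueness of the fine cylindrical vector. None of this appears in your proposal.

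In the neutral-mode case your endgame also does not work as stated. First, the sharp asymptotics $u=\bigl(y^{\top}Q(\tau)y-2\operatorname{tr}Q(\tau)\bigr)/|\tau|+o(1/|\tau|)$ with a convergent Riccati ODE is far stronger than what is available; the paper only establishes subsequential convergence of $\hat u/\|\hat u\|_{\mathcal H}$ to a fixed semi-negative quadratic form $\mathcal Q$, and that suffices. Second, and more seriously, your claim that a kernel vector $e$ of $Q_\infty$ forces the flow to split off a line in direction $e$ is not true (being flatter than the neutral profile in a direction does not imply translation invariance --- again the bowl direction is a counterexample to that implication), and it would prove too much: it would exclude the halfline blowdown with dominant neutral mode, which the paper explicitly retains as a possible case in Corollary 1.5. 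The correct conclusion is the weaker inclusion $\check K\subseteq\ker Q$, which the paper proves by a Brunn--Minkowski concavity argument on cross-sectional areas (directions outside $\ker Q$ exhibit inward quadratic bending, so the cross-sectional area would have to vanish at finite distance, leaving a ray of $\check K$ ``nowhere to go''); since $Q$ is nontrivial this already gives $\dim\check K\leq k-2$ without any rigidity or splitting statement.
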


Setting $k=3$ and relaxing the strict convexity assumption we obtain:

\begin{corollary}[blowdown of ancient 3-convex flows]\label{cor_3con}
Let $M_t=\partial K_t$ be an ancient noncollapsed uniformly 3-convex mean curvature flow in $\mathbb{R}^{n+1}$. Then its blowdown is
\begin{itemize}
\item either a point (which only happens if the solution is compact),
\item or a halfline,
\item or a line (which only happens for a round shrinking $S^{n-1}\times\mathbb{R}$ or an $(n-1)$-dimensional rotationally symmetric oval times $\mathbb{R}$),
\item or a $2$-dimensional halfplane (which only happens if the solution is a $(n-1)$-dimensional rotationally symmetric bowl soliton times $\mathbb{R}$),
\item or a $2$-dimensional plane (which only happens for a round shrinking $S^{n-2}\times\mathbb{R}^2$),
\item or an $n$-dimensional hyperplane (which only happens for flat $\mathbb{R}^n$).
\end{itemize}
Moreover, in the halfline case, the neutral eigenfunctions are dominant and the diameter of the level sets grows slower than $|t|^{\tfrac{1}{2}+\delta}$ for any $\delta>0$.
\end{corollary}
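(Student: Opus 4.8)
The plan is to argue by the dichotomy recalled in the introduction: an ancient noncollapsed flow either is strictly convex, equivalently does not split off a line, or else splits off one or more lines.

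\emph{The strictly convex case.} Here Theorem~\ref{thm_main2} applied with $k=3$ gives $\dim\check K_{t_0}\le 1$. Since $\check K_{t_0}$ is a convex cone with vertex at the origin, when $\dim\check K_{t_0}=0$ it is a point, and when $\dim\check K_{t_0}=1$ it is either a ray or a full line. If it is a point, I would note that $M_t$ must be compact, since an unbounded convex body has a nontrivial asymptotic cone (pick $p_j\in K_{t_0}$ with $|p_j|\to\infty$ and $p_j/|p_j|\to u$; then $\mathbb{R}_{\ge 0}u\subseteq\check K_{t_0}$). If $\dim\check K_{t_0}=1$, the key observation is that $\check K_{t_0}$ cannot be a full line $\mathbb{R}v$: if $\pm v\in\check K_{t_0}$, then $\pm tv\in K_{t_0}$ for all large $t$, so by convexity the whole line $\mathbb{R}v$ lies in $K_{t_0}$, and then the standard fact that a closed convex set containing a line is a product with that line shows that $K_{t_0}$ is invariant under translation along $v$; hence $M_{t_0}$, and therefore the whole flow, splits off a line, contradicting strict convexity. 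Thus in the one-dimensional case $\check K_{t_0}$ is a halfline.

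\emph{The splitting case.} Write $M_t=N_t\times\mathbb{R}^j$ with $j\ge 1$ maximal, so that $N_t\subset\mathbb{R}^{n+1-j}$ is strictly convex, ancient and noncollapsed, and does not split off a line (or else $M_t=\mathbb{R}^n$, the case $j=n$), and note $\check K_{t_0}=\check N_{t_0}\times\mathbb{R}^j$. The principal curvatures of $M_t$ are those of $N_t$ together with $j$ zeros, so uniform $3$-convexity of $M_t$ unwinds as follows. If $j\ge 3$, then $\lambda_1+\lambda_2+\lambda_3\equiv 0$ forces $H\equiv 0$, hence by convexity $M_t=\mathbb{R}^n$ and $\check K_{t_0}$ is an $n$-dimensional hyperplane. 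If $j=2$, then $N_t$ is uniformly convex, in particular uniformly $2$-convex; by the classification of uniformly $2$-convex ancient noncollapsed flows (Brendle--Choi, Angenent--Daskalopoulos--Sesum), and since the cylinder is excluded by maximality of $j$ while the ancient oval and the bowl are not uniformly convex, $N_t$ is a round shrinking $S^{n-2}$, so $\check K_{t_0}=\mathbb{R}^2$. If $j=1$, then $N_t$ is uniformly $2$-convex, so by the same classification $N_t$ is a round shrinking $S^{n-1}$, a rotationally symmetric ancient oval, or a rotationally symmetric bowl soliton; computing $\check N_{t_0}$ in each case — a point, a point, a halfline respectively — gives a line, a line, or a halfplane for $\check K_{t_0}$. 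Combined with the strictly convex case, this yields exactly the six possibilities listed, each with its stated rigidity.

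It remains to establish the addendum in the halfline case, which is where the real work lies. Using the fine cylindrical analysis developed in this paper, I would show that for a strictly convex uniformly $3$-convex ancient flow with one-dimensional blowdown the asymptotics near the relevant cylinder at spatial infinity are governed by the neutral (zero) eigenfunctions of the linearized operator rather than by the unstable ones, and that the coefficients of these neutral modes control the extrinsic geometry so as to force the diameter of the cross-sectional level sets to grow slower than $|t|^{1/2+\delta}$ for every $\delta>0$. The main obstacle is precisely this quantitative step — excluding a dominant unstable mode and converting the resulting fine expansion into a sub-$|t|^{1/2+\delta}$ diameter bound; the rest of the corollary is an assembly of Theorem~\ref{thm_main2}, elementary convex geometry, and the known classification of uniformly $2$-convex ancient noncollapsed flows.
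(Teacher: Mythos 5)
Your treatment of the main enumeration is correct and follows the same route as the paper: Theorem \ref{thm_main2} with $k=3$ handles the strictly convex case (point if compact, halfline otherwise, since a line in the blowdown would force the flow to split off a line), and the non-strictly-convex case reduces, after splitting off $j$ lines, to the classification of ancient noncollapsed uniformly $2$-convex flows. Your case analysis in $j$ (including the observation that $j\geq 3$ forces $H\equiv 0$ and that for $j=2$ the cross-sectional flow is uniformly convex, which excludes the oval and the bowl) is a correct and slightly more detailed version of what the paper asserts in one sentence.

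The gap is the ``moreover'' statement, which you explicitly defer (``I would show that\dots''). This is not a minor quantitative afterthought that can be waved at: it has two distinct parts, and neither is supplied. First, one must show that in the halfline case the unstable mode cannot be dominant. The Merle--Zaag alternative (Proposition \ref{mz.ode.fine.neck}) only gives a dichotomy, and the argument of Section \ref{sec_unstable_end} as written only excludes the unstable mode when $\dim\check K=\ell=k-1=2$; for a one-dimensional blowdown one needs a variant of that argument. Concretely: if the unstable mode were dominant, the fine cylindrical theorem (Theorem \ref{thm Neck asymptotic}) produces a fine cylindrical vector $(a_1,a_2)\neq 0$ independent of the space-time center; passing to limits at points of far cross-sections $K_0\cap\{x_1=h\}$ that respectively maximize and minimize $x_2$, one obtains (after splitting off the line in the $e_1$-direction and invoking Brendle--Choi) two copies of $\mathbb{R}\times\mathrm{Bowl}_{n-1}$ translating in opposite directions, whose fine cylindrical vectors disagree --- a contradiction. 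Second, once neutral dominance is established, the diameter bound is not a new estimate to be derived from scratch but is exactly the content of the corollary at the end of Section \ref{sec_neutral_mode}: there the rank-one negative semi-definite quadratic form is pinned down by its kernel (the halfline direction), giving convergence of $\hat u/\|\hat u\|_{\mathcal H}$ to $-(x_2^2-2)/\|x_2^2-2\|_{\mathcal H}$ without passing to subsequences, and the level-set bound $\bar M_\tau\cap\{x_1=0\}\subseteq B_{\exp(\delta|\tau|)}(0)$, which unrenormalizes (via $|t|=e^{-\tau}$) to the stated $|t|^{\frac12+\delta}$ growth. Without these two steps your proposal proves the six-fold alternative but not the corollary as stated.
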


Let us explain how the corollary follows: If the flow is strictly convex, then its blowdown is a halfline by our theorem (unless the solution is compact in which case the blowdown is simply a point). If the flow is not strictly convex, then after splitting off a line we get an ancient uniformly $2$-convex noncollapsed flow,  and such flows have been classified in the recent breakthrough by Angenent-Daskalopoulos-Sesum \cite{ADS,ADS2} and Brendle-Choi \cite{BC,BC2}. Finally, the `moreover statement' follows as consequence of our proofs as explained in Section  \ref{sec_neutral_mode} and Section \ref{sec_unstable_end}.

As another corollary, for self-similarly translating solutions we obtain:

\begin{corollary}[3-convex translators]\label{cor_3con}
Any noncollapsed uniformly 3-convex translator in $\mathbb{R}^{n+1}$ is $\mathrm{SO}(n-1)$-symmetric.
\end{corollary}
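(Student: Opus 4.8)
The plan is to split into the case where the translator splits off a line, handled by the blowdown classification of Corollary \ref{cor_3con}, and the strictly convex case, handled by a rotational symmetry argument along a neck end.

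Suppose first that the translator is not strictly convex, so it splits off a line. A self-translating hypersurface is never compact, and neither the shrinking round cylinder $S^{n-1}\times\mathbb{R}$, nor a rotationally symmetric oval times $\mathbb{R}$, nor the shrinking cylinder $S^{n-2}\times\mathbb{R}^2$ is a translator. Hence, by Corollary \ref{cor_3con}, the blowdown of our translator must be a halfplane or a hyperplane, so the translator is either an $(n-1)$-dimensional rotationally symmetric bowl soliton times $\mathbb{R}$ or a flat $\mathbb{R}^n$; in both cases it is $\mathrm{SO}(n-1)$-symmetric. (Alternatively, splitting off the line directly yields an $\alpha$-noncollapsed uniformly $2$-convex translator in lower dimension, which is the bowl soliton by \cite{ADS,ADS2,BC,BC2}.)

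Now suppose the translator is strictly convex. By Theorem \ref{thm_main2} its blowdown is at most $1$-dimensional, and since the translator is strictly convex and noncompact the blowdown is a halfline $\ell$ (the point case forces compactness, while the line, plane and hyperplane cases all contain a line and hence split off a line). Because the model shrinkers for $3$-convex flows are only the round sphere, $S^{n-1}\times\mathbb{R}$ and $S^{n-2}\times\mathbb{R}^2$ — the more degenerate cylinders $S^{n-k}\times\mathbb{R}^k$ with $k\geq 3$ being excluded by $3$-convexity — and since the blowdown is only $1$-dimensional, the hypersurface has a single neck end: outside a large ball it is, at the scale of the distance to a fixed tip point $p$, graphical and $C^\infty_{\mathrm{loc}}$-close to the round cylinder $S^{n-1}\times\mathbb{R}$ with axis $\ell$. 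I would then run the fine neck analysis — the codimension-one instance of the fine cylindrical analysis developed in this paper — along this end: expanding the normal graph over the round cylinder into the eigenmodes of the associated Jacobi operator, one shows, using $\alpha$-noncollapsing and the rigidity of the translator equation, that the rotationally non-symmetric modes decay along $\ell$, so that the end is asymptotically rotationally symmetric about $\ell$. Finally, propagating this symmetry inward by a Brendle--Choi type argument \cite{BC} — the rotation vector fields about $\ell$ induce normal variations that solve a linear parabolic equation, vanish at infinity, and hence vanish identically by the strong maximum principle and unique continuation — one concludes that the translator is $\mathrm{SO}(n)$-symmetric, a fortiori $\mathrm{SO}(n-1)$-symmetric. (It can then be identified with the bowl soliton by the ODE analysis of rotationally symmetric translators, but this is not needed here.)

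The main obstacle lies entirely in the strictly convex case, and more precisely in the fine neck analysis: one must show that the non-symmetric eigenmodes decay along the end, with the $3$-convexity being exactly what forces the end to be modeled on the round $S^{n-1}\times\mathbb{R}$ rather than on a more degenerate cylinder. Once asymptotic rotational symmetry at infinity is established, the propagation step from infinity to the whole translator is by now standard, so the quantitative spectral estimates along the neck end are the substantive point.
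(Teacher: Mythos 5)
Your handling of the non-strictly-convex case matches the paper: split off a line and invoke the two-convex classification to get $\mathbb{R}\times\mathrm{Bowl}_{n-1}$, which is $\mathrm{SO}(n-1)$-symmetric. The trouble is in the strictly convex case.

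The paper deduces from Theorem \ref{thm_main2} that the blowdown is a halfline, observes that this forces the translator to have a \emph{unique tip point}, and then simply cites Zhu's theorem \cite{Zhu_higherdim} (noncollapsed translators with a unique tip are $\mathrm{SO}(n-1)$-symmetric). You instead try to re-prove the symmetry from scratch by a fine neck analysis, and two things go wrong. First, the inference ``blowdown is a halfline $\Rightarrow$ the end is modeled on the round cylinder $S^{n-1}\times\mathbb{R}$'' is false for uniformly $3$-convex translators: the tangent flow at $-\infty$ can be the bubble-sheet $\mathbb{R}^2\times S^{n-2}$ even when the blowdown is a halfline. The Hoffman--Ilmanen--Martin--White oval-bowls --- which the paper itself explicitly allows in Remark 1.6 --- are strictly convex uniformly $3$-convex translators with halfline blowdown and bubble-sheet tangent at $-\infty$; their cross-sections are anisotropic ovals, not round spheres, so the $S^{n-1}\times\mathbb{R}$ graphical set-up on which your fine neck analysis rests is unavailable. (Consistent with this, the paper's Corollary \ref{cor_3con} about blowdowns records that in the halfline case the \emph{neutral} mode is dominant, not the unstable one; the neck-type analysis in the unstable mode is exactly what is excluded.) Second, and as a symptom of the first, your conclusion overshoots: you derive $\mathrm{SO}(n)$-symmetry, but the oval-bowls are only $\mathrm{O}(n-1)\times\mathbb{Z}_2$-symmetric, so $\mathrm{SO}(n)$-symmetry cannot be the correct output. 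The right statement, and what Zhu proves, is the weaker $\mathrm{SO}(n-1)$-symmetry about the translation axis, and establishing it requires a quantitative symmetry/improvement argument over the bubble-sheet model (or an analysis of the tip region), not a fine neck expansion over $S^{n-1}\times\mathbb{R}$. To fix your proof, replace the sketched neck analysis by the deduction ``halfline blowdown $\Rightarrow$ unique tip'' and then invoke Zhu's theorem \cite{Zhu_higherdim}, as the paper does.
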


Indeed, any such translator is either an $(n-1)$-dimensional rotationally symmetric bowl soliton times $\mathbb{R}$, or its blowdown is a halfline. The latter in particular implies that the translator has a unique tip point, and hence is $\mathrm{SO}(n-1)$-symmetric by a recent result of Zhu \cite{Zhu_higherdim}.

\begin{remark}[classification problem for 3-convex translators]
Having established the above corollaries it seems likely that the arguments from \cite[Section 3-5]{CHH_translator} generalize to higher dimensions to yield that every noncollapsed uniformly 3-convex translator in $\mathbb{R}^{n+1}$ is either $\mathbb{R}\times\mathrm{Bowl}_{n-1}$, or $\mathrm{Bowl}_n$, or belongs to the one-parameter family of $\mathrm{O}(n-1)\times \mathbb{Z}_2$-symmetric oval-bowls constructed by Hoffman-Ilmanen-Martin-White \cite[Theorem 8.1]{HIMW}.
\end{remark}

Finally, inspecting all the possibilities in Corollary \ref{cor_3con} (blowdown of ancient 3-convex flows), as yet another corollary we obtain:

\begin{corollary}[classification in unstable mode]
The only noncompact ancient noncollpased uniformly $3$-convex flow, with a bubble-sheet tangent flow at $-\infty$, whose unstable mode is dominant is $\mathbb{R}\times\mathrm{Bowl}_{n-1}$.
\end{corollary}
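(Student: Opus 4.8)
The plan is to combine Corollary~\ref{cor_3con} (blowdown of ancient 3-convex flows) with the structural information carried by the phrase "unstable mode is dominant." Concretely, a noncompact ancient noncollapsed uniformly $3$-convex flow with a bubble-sheet (i.e. $(n-2)$-dimensional) tangent flow at $-\infty$ falls, after possibly splitting off a line, into one of the cases enumerated in Corollary~\ref{cor_3con}. The first step is to go through that list and discard, one by one, every possibility that is incompatible with (a) being noncompact, (b) having a bubble-sheet tangent flow at $-\infty$, and (c) having the unstable mode dominant. Noncompactness kills the "point" case. A bubble-sheet tangent flow forces the flow to be genuinely $3$-convex and noncollapsed with a $2$-dimensional cylindrical asymptotic behavior, which rules out the round shrinking $S^{n-1}\times\mathbb{R}$ and the rotationally symmetric oval times $\mathbb{R}$ (these have a neck, not a bubble-sheet, tangent flow), and similarly rules out flat $\mathbb{R}^n$ and the round shrinking $S^{n-2}\times\mathbb{R}^2$ (which is itself the bubble-sheet and has no nontrivial ancient perturbation in this class, being a self-shrinker whose only ancient flow asymptotic to it is the trivial one).

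The heart of the argument is then the dichotomy recorded in the `moreover statement' of Corollary~\ref{cor_3con}: in the halfline blowdown case, the \emph{neutral} eigenfunctions are dominant, with the level-set diameter growing slower than $|t|^{1/2+\delta}$. Hence if instead the \emph{unstable} mode is dominant, the halfline case is excluded. The remaining possibilities from Corollary~\ref{cor_3con} are then the $2$-dimensional halfplane blowdown, which by that corollary happens only for $\mathbb{R}\times\mathrm{Bowl}_{n-1}$, and the $2$-dimensional plane blowdown, already excluded above. So one is left precisely with $\mathbb{R}\times\mathrm{Bowl}_{n-1}$, and one should check conversely that $\mathbb{R}\times\mathrm{Bowl}_{n-1}$ does indeed have a bubble-sheet tangent flow at $-\infty$ and that its dominant mode is the unstable one (this is standard: the bowl soliton has the neck tangent flow at $-\infty$ with dominant unstable mode coming from the translation/rescaling direction, and crossing with $\mathbb{R}$ promotes the neck to a bubble-sheet while preserving dominance of the unstable mode).

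The step I expect to be the main obstacle is the careful bookkeeping of which asymptotic regime (unstable versus neutral) each entry of the Corollary~\ref{cor_3con} list actually exhibits, and in particular making rigorous the claim that a bubble-sheet tangent flow at $-\infty$ plus $3$-convexity plus noncompactness genuinely forces the blowdown to be $2$-dimensional rather than a line or halfline. This requires invoking the fine cylindrical analysis developed in the body of the paper: one needs that the bubble-sheet function satisfies the Ornstein--Uhlenbeck-type asymptotics whose leading behavior is governed either by the neutral (quadratic) mode or by the unstable mode, and that these two alternatives correspond exactly to the halfline blowdown and the $2$-dimensional blowdown respectively. Once that correspondence is in place, the corollary is immediate from Corollary~\ref{cor_3con}; the substance is entirely in transporting the fine cylindrical expansion into a statement about $\dim\check K_{t_0}$, which is what Theorem~\ref{thm_main2} and its proof already accomplish.
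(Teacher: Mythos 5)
Your overall strategy---running through the list in the corollary classifying the blowdown of ancient $3$-convex flows and discarding every entry incompatible with noncompactness, a bubble-sheet tangent flow at $-\infty$, and a dominant unstable mode---is exactly the paper's intended argument (``inspecting all the possibilities''), and most of your case analysis is correct: the point is killed by noncompactness, the halfline is killed by the `moreover' statement (neutral eigenfunctions dominant there), the round shrinking $S^{n-1}\times\mathbb{R}$ and flat $\mathbb{R}^n$ are killed by the tangent-flow hypothesis, the round shrinking $S^{n-2}\times\mathbb{R}^2$ is killed because a self-shrinker has identically vanishing graph function and hence no dominant unstable mode, and the halfplane case is identified with $\mathbb{R}\times\mathrm{Bowl}_{n-1}$, whose converse verification you correctly include.

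There is, however, one genuine error: the $(n-1)$-dimensional rotationally symmetric oval times $\mathbb{R}$ does \emph{not} have a neck tangent flow at $-\infty$. The oval $O_t\subset\mathbb{R}^n$ has tangent flow $\mathbb{R}\times S^{n-2}(\sqrt{2(n-2)t})$, so $O_t\times\mathbb{R}\subset\mathbb{R}^{n+1}$ has tangent flow $\mathbb{R}^2\times S^{n-2}(\sqrt{2(n-2)t})$, which is precisely a bubble sheet. This solution is also noncompact and uniformly $3$-convex, so it satisfies every hypothesis of the corollary except the one about the dominant mode, and your argument as written fails to exclude it. The correct way to discard it is via the mode hypothesis: by Angenent--Daskalopoulos--Sesum the fine neck expansion of the oval is governed by the neutral (quadratic) eigenfunction $x_1^2-2$, and since the graph function of $O_t\times\mathbb{R}$ over the bubble sheet is independent of the extra $\mathbb{R}$-variable, its projection onto $\mathcal{H}_+$ remains negligible compared with its projection onto $\mathcal{H}_0$; hence the neutral, not the unstable, mode is dominant for $O_t\times\mathbb{R}$, contradicting the hypothesis. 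With that correction the proof closes. A smaller remark: your closing concern that one must show the hypotheses ``force the blowdown to be $2$-dimensional rather than a line or halfline'' is misplaced---the oval times $\mathbb{R}$ shows that a line blowdown is perfectly compatible with a bubble-sheet tangent flow; what does the work in those cases is the dominant-mode hypothesis, exactly as in your treatment of the halfline case.
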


To prove our main theorems, we generalize the arguments from \cite{CHH_wing}, by Choi, Hershkovits and the second author, to higher dimensions as follows:

In Section \ref{sec_coarse}, we collect some basic properties of ancient noncollapsed flows in $\mathbb{R}^{n+1}$. In particular, we show that it is enough to analyze the case where the tangent flow at $-\infty$ is given by
\begin{equation}\label{neck_tangent_intro}
\lim_{\lambda \rightarrow 0} \lambda M_{\lambda^{-2}t}=\mathbb{R}^{\ell}\times S^{n-\ell}(\sqrt{2(n-\ell)t})
\end{equation}
with $\ell=n-1$ (respectively $k-1$).

In Section \ref{sec_fol_barriers}, to facilitate barrier and calibration arguments in later sections, we construct an $\mathrm{O}(\ell)\times \mathrm{O}(n+1-\ell)$ symmetric foliation by rotating and shifting the $d=n+1-\ell$ dimensional shrinker foliation from \cite{ADS}.

In Section \ref{Sec_set_up}, we set up a fine cylindrical analysis, which generalizes the fine neck analysis from \cite{ADS,ADS2,BC,BC2,CHH,CHHW} and the fine bubble sheet analysis from \cite{CHH_wing}. Given any space-time point $X_0=(x_{0}, t_{0})$, we consider the renormalized flow
\begin{equation}
\bar M_\tau = e^{\frac{\tau}{2}} \, \left( M_{t_0-e^{-\tau}} - x_0\right).
\end{equation}
Then, the hypersurfaces $\bar M_\tau$ converge for $\tau\to -\infty$ to the cylinder
\begin{equation}
\Gamma=\mathbb{R}^\ell\times S^{n-\ell}(\sqrt{2(n-\ell)}).
\end{equation}
The analysis over $\Gamma$ is governed by the Ornstein-Uhlenbeck operator
\begin{equation}
\mathcal L=\triangle_{\mathbb{R}^{\ell}}+\frac{1}{2(n-\ell)}\triangle_{S^{n-\ell}}-\frac{1}{2}\sum_{i=1}^{\ell}x_{i}\frac{\partial}{\partial x_{i}}+1.
\end{equation}
Applying the Merle-Zaag lemma, we see that either the unstable eigenfunctions, namely
\begin{equation}
1,x_1,\ldots, x_{n+1},
\end{equation}
are dominant, or the neutral eigenfunctions, namely
\begin{equation}
\{x_i^2-2\}_{i=1,\ldots,\ell} \cup \{ x_i x_j\}_{1\leq i < j \leq \ell} \cup \{ x_i x_J\}_{1\leq i \leq \ell < J\leq n+1}
\end{equation}
are dominant. Further, considering $\tilde{M}_\tau=S(\tau) \bar{M}_\tau$, where $S(\tau)\in \mathrm{SO}(n+1)$ is a suitable rotation, we can kill the contribution from $\{ x_i x_J\}$.

In Section \ref{sec_neutral_mode}, we analyze the case where the neutral eigenfunctions are dominant. We show that along a suitable sequence $\tau_i\to -\infty$, the truncated graphical function $\hat u$ of $\tilde{M}_\tau$ satisfies
\begin{equation}
\lim_{\tau_{i} \to -\infty}\frac{\hat u(\cdot,\tau_{i})}{\|\hat u(\cdot,\tau_{i})\|}= {x'}Qx'^T-2\mathrm{tr}\, Q,
\end{equation}
where $x'=(x_1,\ldots, x_\ell)$, and $Q=\{q_{ij}\}$ is a nontrivial semi-negative definite $\ell\times \ell$-matrix. We then prove the crucial inclusion
\begin{equation}\label{eq_incl_intro}
\check{K}_{t_0}\subseteq \mathrm{ker}\, Q.
\end{equation}
Roughly speaking, directions that are not in the kernel of the quadratic form are directions of inwards quadratic bending, and thus must be compact directions. We make this precise using the Brunn-Minkowski inequality. Once \eqref{eq_incl_intro} is established, we conclude that $\dim \check{K}_{t_0}\leq \ell -1\leq k-2$ for uniformly $k$-convex flows.

In Section \ref{sec_unstable}, we consider ancient noncollapsed flows whose unstable mode is dominant. We prove the fine cylindrical theorem, which says that there exists a nonvanishing vector $(a_1,\ldots, a_{\ell})$ with the following significance: For every space-time center $X$, after suitable recentering in the $\mathbb{R}^{n+1-\ell}$-plane, the truncated graph function $\check u^X(\cdot,\tau)$ of the renormalized flow $\bar{M}_\tau^X$ satisfies
\begin{align}
 \check u^X=e^{\tfrac{\tau}{2}}\left(a_1 x_1 +a_2x_2 +\ldots a_\ell x_\ell\right)+ o(e^{\tfrac{\tau}{2}})
\end{align}
for $\tau\ll 0$ depending only on an upper bound on the cylindrical scale of $X$. To show this, we generalize the proofs of the fine neck theorem from \cite{CHH,CHHW} and the fine bubble sheet theorem from \cite{CHH_wing}.

In Section \ref{sec_unstable_end}, we rule out the case where the unstable mode is dominant. We suppose towards a contradiction that there is an ancient noncollapsed uniformly $k$-convex flow $M_t=\partial K_t$ that does not split off a line, which has a $(k-1)$-dimensional blowdown $\check{K}_{t_0}$. We select two distinct rays of differentiability $R^\pm\subset \partial\check{K}_{t_0}$. Considering suitable corresponding sequences of points in $\partial K_t$, we can pass to limits $K^\pm_t$ that split off $k-2$ lines. Using the classification from Brendle-Choi \cite{BC2}, we argue that $K^\pm_t$ must be $(n+2-k)$-dimensional bowls times $(k-2)$-dimensional planes. However, since these bowls translate in different directions, we obtain a contradiction with the fact that the fine cylindrical vector $(a_1,\ldots, a_\ell)$ is independent of the space-time center point. This concludes the outline of the proof.\\

\noindent\textbf{Acknowledgments.}
This research was supported by the NSERC Discovery Grant and the Sloan Research Fellowship of the second author.

\bigskip

\section{Basic properties of ancient noncollapsed flows}\label{sec_coarse}

Let $M_t=\partial K_t$ be a noncollapsed mean curvature flow in $\mathbb{R}^{n+1}$, where $n\geq 3$, that is noncompact and strictly convex. Assume further that it is uniformly $k$-convex, where $k\geq 3$ (in case $k=n$, there is no further assumption). It is not hard to see that the blowdown
\begin{equation}
\check{K}=\lim_{\lambda\to 0} \lambda K_{t_0}
\end{equation}
is in fact independent of the choice of $t_0$ prior to the extinction time (indeed, as one increases $t_0$, by mean-convexity the blowdown cannot increase, and using suitable spheres as barriers, similarly as in \cite[Section 2]{CHH_wing}, one sees that it cannot decrease either). By \cite{HaslhoferKleiner_meanconvex,CM_uniqueness} the tangent flow of $\{K_t\}$ at $-\infty$ in suitable coordinates, is
\begin{equation}\label{neck_tangent}
\lim_{\lambda \rightarrow 0} \lambda K_{\lambda^{-2}t}=\mathbb{R}^{\ell}\times D^{n+1-\ell}(\sqrt{2(n-\ell)t}),
\end{equation}
for some $\ell\in \{1,\ldots, k-1\}$. Together with mean-convexity it follows that
\begin{equation}
\check{K}\subset \mathbb{R}^{\ell}\times\{0\}.
\end{equation}
If $\ell\leq k-2$, then $\dim \check{K}\leq k-2$ and we are done. Hence, we can assume from now on that $\ell=k-1$. Moreover, since our solution is strictly convex, $\check{K}$ does not contain any line.\\

Given any ancient noncollapsed mean curvature flow $\mathcal{M}$, that is not a round shrinking cylinder, any point $X=(x,t)\in \mathcal{M}$ and scale $r>0$, we consider the flow
\begin{equation}
\mathcal{M}_{X,r}=\mathcal{D}_{1/r}(\mathcal M -X),
\end{equation}
that is obtained from $\mathcal M$ by translating $X$ to the space-time origin and parabolically rescaling by $1/r$.

\begin{definition}[$\eps$-cylindrical]\label{eps_cyl}
We say that $\mathcal M$ is \emph{$\varepsilon$-cylindrical around $X$ at scale $r$}, if $\mathcal{M}_{X,r}$ is $\varepsilon$-close in $C^{\lfloor1/\varepsilon \rfloor}$ in $B(0,1/\varepsilon)\times [-1,-2]$ to the evolution of a round shrinking cylinder $\mathbb{R}^\ell\times S^{n-\ell}(\sqrt{2(n-\ell)t})$ with axis through the origin.
\end{definition}

We fix a small enough parameter $\varepsilon>0$ quantifying the quality of the cylinders for the rest of the paper. Given $X=(x,t)\in\mathcal{M}$, we analyze the solution around $X$ at the diadic scales $r_j=2^j$, where $j\in \mathbb{Z}$. Using Huisken's monotonicity formula \cite{Huisken_monotonicity} and quantitative differentiation (see e.g. \cite{CHN_stratification}), for every $X\in \mathcal{M}$, we can find an integer $J(X)\in\mathbb{Z}$ such that
\begin{equation}\label{eq_thm_quant1}
\textrm{$\mathcal M$ is not $\varepsilon$-cylindrical around $X$ at scale $r_j$ for all $j<J(X)$},
\end{equation}
and
\begin{equation}\label{eq_thm_quant2}
\textrm{$\mathcal M$ is $\tfrac{\varepsilon}{2}$-cylindrical around $X$ at scale $r_j$ for all $j\geq J(X)+N$}.
\end{equation}

\begin{definition}[cylindrical scale]
The \emph{cylindrical scale} of $X\in\mathcal{M}$ is defined by
\begin{equation}
Z(X)=2^{J(X)}.
\end{equation}
\end{definition}

\bigskip

\section{Foliations and barriers}\label{sec_fol_barriers}

We recall from Angenent-Daskalopoulos-Sesum \cite{ADS} that there is some $L_0>1$ such that  for every $a\geq L_0$ and $b>0$, there are $d$-dimensional shrinkers in $\mathbb{R}^{d+1}$,
\begin{align}
{\Sigma}_a &= \{ \textrm{surface of revolution with profile } r=u_a(x_1), 0\leq x_1 \leq a\},\\
\tilde{{\Sigma}}_b &= \{ \textrm{surface of revolution with profile } r=\tilde{u}_b(x_1), 0\leq x_1 <\infty\},\nonumber
\end{align}
as illustrated in \cite[Fig. 1]{ADS}.  Here, the parameter $a$ captures where the  concave functions $u_a$ meet the $x_1$-axis, namely $u_a(a)=0$, and the parameter $b$ is the asymptotic slope of the convex functions $\tilde{u}_b$, namely $\lim_{x_1\to \infty} \tilde{u}_b'(x_1)=b$. We recall:

\begin{lemma}[Lemmas 4.9 and 4.10 in \cite{ADS}] \label{lower_d_ADS}
There exists some $\delta>0$ such that the shrinkers ${\Sigma}_a,\tilde{{\Sigma}}_b$ and ${\Sigma}:=\{x_2^2+\ldots +x_{d+1}^2=2(d-1)\}\subset \mathbb{R}^{d+1}$ foliate the region
\begin{equation}
\{(x_1,\ldots,x_{d+1})\in \mathbb{R}^{d+1}\;|\;  x_1\geq L_0 \textrm{ and }  x_2^2+\ldots +x_{d+1}^2 \leq 2(d-1)+\delta\; \}.
\end{equation}
Moreover, denoting by $\nu_{\textrm{fol}}$ the outward unit normal of this family, we have
\begin{equation}\label{eq_calibration}
\mathrm{div}(e^{-x^2/4} \nu_{\mathrm{fol}})=0.
\end{equation}
\end{lemma}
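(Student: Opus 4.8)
The plan is to reduce everything to a one-variable ODE analysis and then read off the calibration identity essentially for free. A surface of revolution in $\mathbb{R}^{d+1}$ with profile $r=u(x_1)$ is a self-shrinker, i.e. satisfies $H=\tfrac12\langle x,\nu\rangle$ with respect to the outward normal, precisely when
\begin{equation}
\frac{u''}{1+(u')^2}=\frac{d-1}{u}-\frac{u}{2}+\frac{x_1 u'}{2}.
\end{equation}
The cylinder $\Sigma$ is the constant solution $u\equiv\sqrt{2(d-1)}$ of this equation, so the content of the lemma is that $\Sigma$, together with the one-parameter families $\Sigma_a$ (with $u_a(a)=0$, concave, defined for $a\geq L_0$) and $\tilde{\Sigma}_b$ (entire, convex, with $\tilde u_b'(x_1)\to b$), foliates the slab $\{x_1\geq L_0,\ x_2^2+\ldots+x_{d+1}^2\leq 2(d-1)+\delta\}$, and that the resulting unit normal field satisfies \eqref{eq_calibration}.

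For the existence and basic properties of the profiles $u_a$ and $\tilde u_b$ I would invoke the ODE shooting analysis of Angenent--Daskalopoulos--Sesum (Lemmas 4.9 and 4.10 in \cite{ADS}, building on their earlier work): for $\Sigma_a$ one shoots from the point $(a,0)$ on the axis, where the regularity condition forces a unique trajectory with a vertical tangent at $x_1=a$, and a phase-plane argument shows that for every $a\geq L_0$ the solution extends to a concave function on $[0,a]$ meeting the axis orthogonally at $x_1=0$; for $\tilde{\Sigma}_b$ one shoots from $x_1=+\infty$ prescribing the linear asymptotics, convexity is preserved, and the solution extends to all of $[0,\infty)$ with $\tilde u_b'(0)=0$. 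The monotonicity of $u_a(x_1)$ in $a$ and of $\tilde u_b(x_1)$ in $b$, together with the limits $u_a\to\sqrt{2(d-1)}$ as $a\to\infty$ and $\tilde u_b\to\sqrt{2(d-1)}$ as $b\to 0$, comes out of the same analysis.

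The heart of the matter is the foliation property: pairwise disjointness of the leaves, smooth dependence on the parameter, and the fact that through every point of the slab there passes exactly one leaf. Disjointness is a maximum-principle statement --- viewing shrinkers as minimal hypersurfaces for the Gaussian weight $e^{-|x|^2/4}$, hence as solutions of a quasilinear elliptic equation for their profiles, two distinct leaves touching at an interior point would have to coincide by the strong maximum principle. Given disjointness, smooth dependence on the parameter plus a connectedness argument upgrades the monotonicity above to the statement that the three families, glued along the cylinder $\Sigma$, sweep out the slab exactly once, provided $\delta>0$ is chosen small enough (and $L_0$ large enough); the role of $\delta$ is precisely to confine attention to the thin region in which all relevant leaves are the ``good'' ones produced by the shooting argument and do not yet run into each other. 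I expect this step --- reproducing the global foliation structure of \cite{ADS} from the ODE --- to be the main obstacle; the existence of the individual shrinkers and the calibration identity are comparatively soft.

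Finally, \eqref{eq_calibration} is automatic once the leaves are known to be shrinkers. Extend $\nu_{\mathrm{fol}}$ to the slab as the unit normal field of the foliation. Since $\abs{\nu_{\mathrm{fol}}}\equiv 1$ we have $\langle\nabla_{\nu_{\mathrm{fol}}}\nu_{\mathrm{fol}},\nu_{\mathrm{fol}}\rangle=0$, so $\mathrm{div}\,\nu_{\mathrm{fol}}$ at a point equals the mean curvature of the leaf through that point. Hence
\begin{equation}
\mathrm{div}\big(e^{-x^2/4}\,\nu_{\mathrm{fol}}\big)=e^{-x^2/4}\Big(H_{\mathrm{leaf}}-\tfrac12\langle x,\nu_{\mathrm{fol}}\rangle\Big)=0,
\end{equation}
since every leaf satisfies the shrinker equation $H=\tfrac12\langle x,\nu\rangle$.
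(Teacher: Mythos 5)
The paper does not prove this lemma at all: it is recalled directly from Angenent--Daskalopoulos--Sesum (the bracket ``[Lemmas 4.9 and 4.10 in \cite{ADS}]'' \emph{is} the proof, a citation), stated for the general dimension $d$ since the [ADS] shooting analysis is dimension-independent. Your sketch correctly identifies those [ADS] lemmas as the real content, and your derivation of \eqref{eq_calibration} from the shrinker equation via $\mathrm{div}\,\nu_{\mathrm{fol}} = H_{\mathrm{leaf}}$ is the standard and correct computation, so the proposal takes essentially the same route as the paper.
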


We now write points in $\mathbb{R}^{n+1}$ in the form $x=(x', x'')$, where $x'=(x_{1},\dots,x_{\ell})$ and $x''=(x_{\ell+1},\dots,x_{n+1})$. We choose $d=n+1-\ell$ and shift and rotate the above foliation to construct a suitable foliation in $\mathbb{R}^{n+1}$:

\begin{definition}[cylindrical foliation]\label{new_fol_def}
For every $a\geq L_{0}$, we denote by $\Gamma_{a}$ the $\mathrm{O}(\ell)\times \mathrm{O}(n+1-\ell)$ symmetric hypersurface in $\mathbb{R}^{n+1}$ given by
\begin{equation}
    \Gamma_{a}=\{(x', x''): (|x'|-1, x'')\in \Sigma_{a}\}.
\end{equation}
Similarly, for every $b\geq L_{0}$, we denote by $\tilde{\Gamma}_{b}$ the
$\mathrm{O}(\ell)\times \mathrm{O}(n+1-\ell)$ symmetric hypersurface in $\mathbb{R}^{n+1}$ given by
\begin{equation}
    \tilde{\Gamma}_{b}=\{(x', x''): (|x'|-1, x'')\in \tilde{\Sigma}_{b}\}.
\end{equation}
\end{definition}

\begin{lemma}[Foliation lemma]\label{foli_lemma}
There exist $\delta>0$ and $L_1>2$ such that the hypersurfaces ${\Gamma}_a$, ${\tilde{\Gamma}}_b$, and the cylinder ${\Gamma}:=\mathbb{R}^{\ell}\times S^{n-\ell}(\sqrt{2(n-\ell)})$ foliate the domain
\begin{equation*}
{\Omega}:=\left\{(x_1,\ldots,x_{n+1}) |  x_1^2+\ldots+x_{\ell}^2\geq L_1^2,\, x_{\ell+1}^2+\ldots + x_{n+1}^2 \leq 2(n-\ell)+\delta\right\}.
\end{equation*}
Moreover, denoting by $\nu_{\mathrm{fol}}$ the outward unit normal of this foliation, we have
\begin{equation}\label{negative_div}
\mathrm{div}(\nu_{\mathrm{fol}}e^{-|x|^2/4})\leq 0\;\;\;\textrm{inside the cylinder},
\end{equation}
and
\begin{equation}\label{positive_div}
\mathrm{div}(\nu_{\mathrm{fol}}e^{-|x|^2/4})\geq 0\;\;\;\textrm{outside the cylinder}.
\end{equation}
\end{lemma}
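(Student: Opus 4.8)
The plan is to transfer the ADS foliation of Lemma \ref{lower_d_ADS} through the map $\Phi\colon (x',x'')\mapsto (|x'|-1, x'')$, and to track how the calibration identity \eqref{eq_calibration} degrades under this non-isometric change of variables. First I would note that by construction the three families $\Gamma_a$, $\tilde\Gamma_b$, $\Gamma$ are exactly the $\Phi$-preimages of the families $\Sigma_a$, $\tilde\Sigma_b$, $\Sigma$, and that these foliate $\Omega$ provided $L_1$ is chosen so that $\{x_1\ge L_0\}$ in the ADS picture corresponds to $\{|x'|-1\ge L_0\}$, i.e. $L_1 = L_0+1$ works, with the same $\delta$ as in Lemma \ref{lower_d_ADS} governing the $x''$-slab. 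The key point is that on the region $\{|x'|\ge L_1\}$ the map $\Phi$ is a submersion onto $\{x_1\ge L_0\}$ with $(\ell-1)$-dimensional spherical fibers, so the $\Phi$-preimage of a foliation is again a foliation, and the leaves are smooth away from $x'=0$, which is excluded from $\Omega$.

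Next I would compute $\nu_{\mathrm{fol}}$ on the new leaves. Writing $\omega = x'/|x'|$ and using that $\Phi$ maps the radial direction $\omega$ to the $x_1$-axis while fixing $x''$, the unit normal to $\Gamma_a$ at $(x',x'')$ is obtained by pulling back $\nu^{\Sigma}_{\mathrm{fol}}$ at $(|x'|-1,x'')$: if $\nu^{\Sigma}_{\mathrm{fol}} = (\alpha, \beta) \in \mathbb{R}\times\mathbb{R}^{d}$ in the splitting $x_1$ versus the rotation variable, then $\nu_{\mathrm{fol}} = (\alpha\,\omega, \beta)$ in the splitting $x'$ versus $x''$ (this is already normalized since $|\omega|=1$). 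Then I would compute the weighted divergence directly:
\begin{equation*}
\mathrm{div}\big(e^{-|x|^2/4}\nu_{\mathrm{fol}}\big) = e^{-|x|^2/4}\Big(\mathrm{div}\,\nu_{\mathrm{fol}} - \tfrac{1}{2}\langle x, \nu_{\mathrm{fol}}\rangle\Big),
\end{equation*}
and compare term by term with the ADS quantity $\mathrm{div}(e^{-y^2/4}\nu^{\Sigma}_{\mathrm{fol}})=0$ at the point $y=(|x'|-1,x'')$. The divergence of $\nu_{\mathrm{fol}} = (\alpha(|x'|-1,x'')\,\omega, \beta(|x'|-1,x''))$ picks up the ADS divergence plus an extra term $(\ell-1)\alpha/|x'|$ coming from $\mathrm{div}_{x'}(\alpha\,\omega) = \partial_{x_1}\alpha + (\ell-1)\alpha/|x'|$ — the $(\ell-1)/|x'|$ being the mean curvature of the sphere of radius $|x'|$. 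The drift term contributes $-\tfrac12(|x'|\alpha + \langle x'',\beta\rangle)$ against the ADS $-\tfrac12((|x'|-1)\alpha + \langle x'',\beta\rangle)$, a difference of $-\tfrac12\alpha$. Collecting, I expect
\begin{equation*}
e^{|x|^2/4}\,\mathrm{div}\big(e^{-|x|^2/4}\nu_{\mathrm{fol}}\big) = \Big(\tfrac{\ell-1}{|x'|} - \tfrac12\Big)\alpha + (\text{lower-order}),
\end{equation*}
so the sign is controlled by the sign of $\alpha$, the $x'$-radial component of the normal, once $|x'|\ge L_1 > 2(\ell-1)$. On the cylinder $\Gamma$ one has $\alpha = 0$ and the expression vanishes; moving inward (toward smaller $|x''|$) the leaves $\Sigma_a$ tilt so that the normal points outward-and-down, giving $\alpha<0$ hence \eqref{negative_div}, while moving outward the leaves $\tilde\Sigma_b$ give $\alpha>0$ hence \eqref{positive_div}. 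Here I would invoke the monotonicity of $u_a$ and $\tilde u_b$ recorded in \cite{ADS}: the $\Sigma_a$ are concave and nested inside the cylinder, the $\tilde\Sigma_b$ convex and outside, which pins down the sign of $\alpha$ on each side.

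The main obstacle is making the sign analysis of the extra terms robust: the correction is only favorable because $|x'|$ is large, but near the cylinder $\alpha\to 0$ and one must check that the genuinely new term $(\ell-1)\alpha/|x'|$ does not overwhelm the ADS cancellation with the wrong sign — it does not, precisely because it carries the same factor $\alpha$ whose sign we are tracking, so the whole leading bracket $(\tfrac{\ell-1}{|x'|}-\tfrac12)\alpha$ has a definite sign. One still has to absorb genuinely lower-order cross terms (e.g. from differentiating $\alpha$, $\beta$ in the $x'$ and $x''$ directions and from the $-\tfrac12\alpha$ drift discrepancy) into this leading term; this forces choosing $L_1$ large, possibly also shrinking $\delta$, so that on $\Omega$ the quantity $\tfrac{\ell-1}{|x'|}-\tfrac12 \le -\tfrac14$ dominates. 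A secondary technical point is regularity of $\nu_{\mathrm{fol}}$ where the families $\Sigma_a$ and $\tilde\Sigma_b$ meet the cylinder $\Sigma$: this is handled exactly as in \cite{ADS}, where the foliation is shown to be $C^1$ (indeed smooth) across $\Sigma$, and the $C^1$ regularity is all that is needed for the distributional divergence inequalities \eqref{negative_div}–\eqref{positive_div} to hold in the classical pointwise sense away from $\Sigma$ and to be stable across it.
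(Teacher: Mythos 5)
Your approach is essentially the one the paper uses: the paper restricts to the plane $\{x_1>0,\, x_2=\dots=x_\ell=0\}$ by symmetry and invokes the relation $H_{\Gamma_*}=H_{\Sigma_*}+\tfrac{\ell-1}{x_1}\langle e_1,\nu_{\mathrm{fol}}\rangle$ together with the shrinker equation, whereas you compute the weighted divergence in ambient coordinates using the ADS calibration identity --- these are the same computation dressed differently, so this is not a genuinely different route.

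Two corrections, one cosmetic and one substantive. Cosmetically, your ``(lower-order)'' terms and the worry about absorbing them are phantoms: carrying out exactly the calculation you set up, one finds
\begin{equation*}
e^{|x|^2/4}\,\mathrm{div}\bigl(e^{-|x|^2/4}\nu_{\mathrm{fol}}\bigr)=\Bigl(\tfrac{\ell-1}{|x'|}-\tfrac12\Bigr)\alpha
\end{equation*}
with \emph{no remainder}, because $\mathrm{div}_{x'}(\alpha\omega)=\partial_{y_1}\alpha+\tfrac{(\ell-1)\alpha}{|x'|}$ and $\langle x,\nu_{\mathrm{fol}}\rangle=\langle y,\nu^\Sigma\rangle+\alpha$ reproduce the ADS calibration identity on the nose. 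So there is nothing to dominate and no need to shrink $\delta$; one only needs $L_1\ge 2(\ell-1)$ (and $L_1\ge L_0+1$ so $|x'|-1\ge L_0$), which is why the paper takes $L_1=L_0+2(\ell-1)$. Substantively, you have the sign of $\alpha=\langle e_1,\nu_{\mathrm{fol}}\rangle$ backwards on both families. For $\Sigma_a$ the profile $u_a$ is concave on $[0,a]$ with $u_a'(0)=0$ and $u_a(a)=0$, so $u_a'\le 0$, and the outward normal (pointing toward increasing $r$) is proportional to $(-u_a',\,\omega'')$, giving $\alpha\ge 0$, not $\alpha<0$; similarly for $\tilde\Sigma_b$ convexity gives $\tilde u_b'\ge 0$ and hence $\alpha\le 0$. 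Since $\tfrac{\ell-1}{|x'|}-\tfrac12\le 0$ on $\Omega$, it is precisely $\alpha\ge 0$ on $\Sigma_a$ that yields \eqref{negative_div} and $\alpha\le 0$ on $\tilde\Sigma_b$ that yields \eqref{positive_div}; your stated signs would produce the opposite inequalities. The conclusion in your writeup happens to land correctly only because you flipped both signs simultaneously, but the intermediate reasoning should be repaired.
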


\begin{proof}
Let $\delta>0$ be as in Lemma \ref{lower_d_ADS}, and set $L_{1}=L_{0}+2(\ell-1)$. The fact that ${\Gamma}_a$, ${\tilde{\Gamma}}_b$ and $\Gamma$ foliate $\Omega$ is implied by Lemma \ref{lower_d_ADS} and Definition \ref{new_fol_def}.\\
Now, observe that for every element in $\Gamma_{*}$ in the foliation of $\Omega$, we have
\begin{equation}
    \mathrm{div}(\nu_{\mathrm{fol}}e^{-|x|^2/4})=(H_{\Gamma_{*}}-\frac{1}{2}\left\langle x, \nu_{\mathrm{fol}} \right\rangle)e^{-|x|^2/4}\, .
\end{equation}
By symmetry, it suffices to compute the curvatures $H_{\Gamma_{*}}$ of $\Gamma_{*}$ in the region $\{x_{1}>0, x_{2}=\dots=x_{\ell}=0\}$,
where we can identify points and unit normals in $\Gamma_{*}$ with the corresponding ones in $\Sigma_{*}$, by disregarding the $x_{2}, \dots, x_{\ell}$ components. The relation between the mean curvature of a surface $\Sigma_{*}$ and its (unshifted) rotation $\Gamma_{*}\subset \mathbb{R}^{n+1}$ on points with $x_{1}>0, x_{2}=\dots=x_{\ell}=0$ is given by
\begin{equation}\label{relation}
    H_{\Gamma_{*}}=H_{\Sigma_{*}}+\frac{\ell-1}{x_{1}}\left\langle e_{1}, \nu_{\mathrm{fol}} \right\rangle,
\end{equation}
where $e_{1}=(1, 0, \dots, 0)\in \mathbb{R}^{n-\ell+2}$.
For the shrinkers $\Sigma_a$, the concavity of $u_{a}$ implies  $\left\langle e_{1}, \nu_{\mathrm{fol}} \right\rangle\geq 0$, so we infer that
\begin{equation}
    H_{\Gamma_{a}}=\frac{1}{2}\left\langle x-e_{1},  \nu_{\mathrm{fol}} \right\rangle+\frac{\ell-1}{x_{1}}\left\langle e_{1}, \nu_{\mathrm{fol}} \right\rangle\leq \frac{1}{2}  \left\langle x, \nu_{\mathrm{fol}} \right\rangle,
\end{equation}
since in $\Omega\cap \{x_{1}>0, x_{2}=\dots=x_{\ell}=0\}$, we have $x_{1}\geq L_{1}\geq 2(\ell-1)$.\\
For the shrinkers $\tilde{\Sigma}_b$, the convexity of $\tilde{u}_{b}$ implies that  $\left\langle e_{1}, \nu_{\mathrm{fol}} \right\rangle\leq 0$, so similarly we have
\begin{equation}
    H_{\tilde{\Gamma}_{b}}\geq \frac{1}{2}  \left\langle x, \nu_{\mathrm{fol}} \right\rangle.
\end{equation}
This proves the lemma.
\end{proof}

\begin{corollary}[Inner Barriers]\label{lemma_inner_barrier}
Let $\{K_{\tau}\}_{\tau\in [\tau_1,\tau_2]}$ be compact domains, whose boundary evolves by renormalized mean curvature flow. If ${\Gamma}_a$ is contained in $K_{\tau}$ for every $\tau\in [\tau_1,\tau_2]$, and $\partial K_{\tau}\cap {\Gamma}_a=\emptyset$ for all $\tau<\tau_2$, then
\begin{equation}
\partial K_{\tau_2}\cap {\Gamma}_a\subseteq \partial {\Gamma}_a.
\end{equation}
\end{corollary}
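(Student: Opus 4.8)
The plan is to prove Corollary~\ref{lemma_inner_barrier} as a standard avoidance/maximum-principle argument, using the crucial fact that the shrinkers $\Gamma_a$ satisfy a favorable differential inequality relative to the renormalized mean curvature flow on the inside of the cylinder, which is exactly the content of \eqref{negative_div} in the Foliation Lemma. First I would translate \eqref{negative_div} into the statement that each $\Gamma_a$ is a (weak) subsolution of the renormalized mean curvature flow, i.e. that moving $\Gamma_a$ by renormalized MCF would push it inward: indeed $\mathrm{div}(\nu_{\mathrm{fol}}e^{-|x|^2/4}) = (H_{\Gamma_a} - \tfrac12\langle x,\nu_{\mathrm{fol}}\rangle)e^{-|x|^2/4} \leq 0$ means the renormalized mean curvature $H_{\Gamma_a} - \tfrac12\langle x,\nu_{\mathrm{fol}}\rangle \leq 0$ at every point of $\Gamma_a$ inside the cylinder, so $\Gamma_a$ is stationary-or-expanding as a barrier and cannot be crossed from inside by a flow that starts with $\Gamma_a$ in its interior.

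Then I would run the contact-time argument. Suppose for contradiction that the conclusion fails, so there is a first time $\tau_\ast \in (\tau_1,\tau_2]$ (necessarily $\tau_\ast = \tau_2$ by the hypothesis that $\partial K_\tau \cap \Gamma_a = \emptyset$ for $\tau < \tau_2$) and a point $p \in \partial K_{\tau_2} \cap \Gamma_a$ with $p \notin \partial \Gamma_a$, i.e. $p$ lies in the interior of the foliation region $\Omega$ (or on the $x'' = 0$ slice, which is handled by the $\mathrm{O}(\ell)\times\mathrm{O}(n+1-\ell)$ symmetry and the convexity of the profile). Since $\Gamma_a \subseteq K_\tau$ for all $\tau$ and $p$ is an interior contact point, $\partial K_{\tau_2}$ touches $\Gamma_a$ from outside at $p$ and the two hypersurfaces are tangent there with the same outward normal $\nu_{\mathrm{fol}}(p)$; moreover $\partial K_{\tau_2}$ lies (locally) on the outer side of $\Gamma_a$. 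Comparing the renormalized normal velocities at $p$: the renormalized MCF gives $\partial K_{\tau_2}$ normal speed $H_{\partial K_{\tau_2}}(p) - \tfrac12\langle p, \nu\rangle$, while $\Gamma_a$ has renormalized mean curvature $\leq 0$ by \eqref{negative_div}. At a first interior touching point from outside, the geometric maximum principle forces $H_{\partial K_{\tau_2}}(p) \leq H_{\Gamma_a}(p)$ (the enclosing surface is less curved), hence the renormalized speed of $\partial K_{\tau_2}$ at $p$ is $\leq 0$; combined with the ordering of the surfaces for $\tau < \tau_2$ this contradicts the fact that contact occurs at $\tau_2$ — one gets that $\partial K_\tau$ must already have touched $\Gamma_a$ at an earlier time, or the strong maximum principle forces $\partial K_{\tau_2}$ to coincide with $\Gamma_a$ near $p$, which then propagates to force contact on $\partial\Gamma_a$ as well, again a contradiction.

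The cleanest way to make this rigorous, and the way I would actually write it, is to avoid the first-contact-point subtleties by using the full foliation rather than a single leaf: since $\{\Gamma_{a'}\}_{a'}$ together with $\Gamma$ foliate $\Omega$, and since $a' \mapsto \Gamma_{a'}$ is (for $a' \geq L_0$) a monotone family with each leaf a subsolution inside the cylinder by \eqref{negative_div}, one defines $a_\ast(\tau) = \sup\{a' : \Gamma_{a'} \subseteq K_\tau\}$ and shows $a_\ast$ is monotone in $\tau$ by the maximum principle applied leafwise; the hypothesis says $a_\ast(\tau) \geq a$ for $\tau < \tau_2$ with equality not yet achieved in the interior, and passing to $\tau_2$ the only way $\partial K_{\tau_2}$ can meet $\Gamma_a$ is at $\partial\Gamma_a$, where the foliation degenerates (the leaves $\Gamma_{a'}$ all meet the axis $\{x''=0\}$ at their endpoints). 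I would phrase this using Ilmanen's/White's weak-set-flow avoidance principle for the renormalized flow, or equivalently cite the analogous barrier lemma in \cite{ADS} and \cite{CHH_wing} of which this is the direct $\mathrm{O}(\ell)\times\mathrm{O}(n+1-\ell)$-symmetric analogue, since \eqref{relation} shows the shift by $1$ in the $|x'|$ variable only helps (it makes $H_{\Gamma_a}$ smaller than $H_{\Sigma_a}$-plus-curvature in the favorable direction). The main obstacle is the behavior near $\partial\Gamma_a$: there the foliation is not smooth and the leaves are not graphical over the cylinder, so the comparison has to either be cut off away from $\partial\Gamma_a$ (which is fine, since the conclusion explicitly allows contact on $\partial\Gamma_a$) or handled by the rotational symmetry, reducing to the $1$-dimensional statement about the ADS profiles $u_a$ meeting the axis, which is already in \cite{ADS}.
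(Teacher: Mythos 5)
Your proposal is correct and takes essentially the same route as the paper: translate \eqref{negative_div} into the statement that $\vec{H}+\tfrac{x^{\perp}}{2}$ points outward of $\Gamma_a$ (so $\Gamma_a$ is a subsolution of the renormalized flow), and then invoke the maximum principle at a hypothetical first interior contact point. The paper's own proof is exactly these two sentences; your more detailed first-contact argument, the foliation-sweep reformulation via $a_\ast(\tau)$, and the care taken near $\partial\Gamma_a$ are all just expansions of the same idea.
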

\begin{proof}
Lemma \ref{foli_lemma} implies that the vector $\vec{H}+\frac{x^{\perp}}{2}$
points outwards of ${\Gamma}_a$. The result now follows from the maximum principle.
\end{proof}

\bigskip

\section{Setting up the fine cylindrical analysis}\label{Sec_set_up}

In this section, we set up the fine cylindrical analysis. This is similar to setting up fine bubble sheet analysis in \cite[Section 4]{CHH_wing}, so we will discuss this briefly.
Throughout this section, let $\mathcal{M}$ be an ancient noncollapsed  flow in $\mathbb{R}^{n+1}$ whose tangent flow at $-\infty$ is a round shrinking cylinder $\mathbb{R}^{\ell}\times S^{n-\ell}(\sqrt{2(n-\ell)t})$. Assume further that $\mathcal{M}$ is not self-similarly shrinking. Given any space-time point $X_0=(x_{0}, t_{0})$, we consider the renormalized flow
\begin{equation}
\bar M^{X_0}_\tau = e^{\frac{\tau}{2}} \, \left( M_{t_0-e^{-\tau}} - x_0\right).
\end{equation}
Then, as $\tau\rightarrow -\infty$, the hypersurfaces $\bar M^{X_0}_\tau$ converges to the cylinder
\begin{equation}
\Gamma=\mathbb{R}^{\ell}\times S^{n-\ell}({\sqrt{2(n-\ell)}}).
\end{equation}
After rescaling, we can assume that $Z(X_0)\leq 1$, and we can find a universal function $\rho(\tau)>0$ with
\begin{equation}\label{univ_fns}
\lim_{\tau \to -\infty} \rho(\tau)=\infty, \quad \textrm{and} -\rho(\tau) \leq \rho'(\tau) \leq 0,
\end{equation}
so that for every $S\in \mathrm{SO}(n+1)$ with $\sphericalangle(S(\Gamma),\Gamma)< \rho(\tau)^{-3}$, the rotated surface $S(M^{X_0}_\tau)$ is the graph of a function $u=u_S(\cdot,\tau)$ over $\Gamma \cap B_{2\rho(\tau)}(0)$ with
\begin{equation}\label{small_graph}
\|u(\cdot,\tau)\|_{C^4(\Gamma \cap B_{2\rho(\tau)}(0))} \leq  \rho(\tau)^{-2}.
\end{equation}
We fix a nonnegative smooth cutoff function $\chi$ satisfying $\chi(s)=1$ for $|s| \leq \frac{1}{2}$ and $\chi(s)=0$ for $|s| \geq 1$. We consider the truncated function
\begin{equation}
\hat{u}(x,\tau):=u(x,\tau) \chi\left(\frac{r}{\rho(\tau)}\right),
\end{equation}
where
\begin{equation}\label{r_def}
r(x):=\sqrt{x_1^2+\ldots +x_{\ell}^2}.
\end{equation}

Using the implicit function theorem, similarly as in \cite[Proposition 4.1]{CHH_wing}, we can find a differentiable function $S^{X_0}(\tau)\in \mathrm{SO}(n+1)$, defined for $\tau$ sufficiently negative, such that for $u:=u_{S^{X_0}(\tau)}$  we have
\begin{equation}\label{eq_orth}
\int_{\Gamma\cap B_{\rho(\tau)}(0)} \langle Ax,\nu_\Gamma\rangle \hat{u}(x,\tau) e^{-\frac{|x|^2}{4}}=0 \qquad\textrm{ for all } A\in o(n+1).
\end{equation}
Moreover, we can arrange that for all $\tau\ll 0$, the matrix
\begin{equation}
A(\tau)=S'(\tau)S(\tau)^{-1}\in o(n+1)
\end{equation}
 satisfies $A_{ij}(\tau)=0$ for $1\leq i,j\leq \ell$, and $A_{IJ}(\tau)=0$ for $\ell+1\leq I,J\leq n+1$.

We now set
\begin{equation}
\tilde{M}_\tau^{X_0}=S^{X_0}(\tau)\bar M_\tau^{X_0},
\end{equation}
where $S^{X_0}(\tau)\in SO(n+1)$ is the fine tuning rotation from above, and let
\begin{equation}
u:=u_{S^{X_0}(\tau)},
\end{equation}
so $u$ describes $\tilde{M}_\tau^{X_0}$ as a graph over the cylinder $\Gamma$.

\begin{proposition}[Inverse Poincare inequality]\label{Gaussian density analysis}
The graph function $u$ satisfies the integral estimates
\begin{equation}
\int_{\Gamma \cap \{|r| \leq L\}} e^{-\frac{|x|^2}{4}} \, |\nabla u(x,\tau)|^2 \leq  C \int_{\Gamma \cap \{|r| \leq \frac{L}{2}\}} e^{-\frac{|x|^2}{4}} \, u(x,\tau)^2
\end{equation}
and
\begin{equation}
\int_{\Gamma \cap \{\frac{L}{2} \leq |r| \leq L\}} e^{-\frac{|x|^2}{4}} \, u(x,\tau)^2 \leq \frac{C}{L^2} \int_{\Gamma \cap \{|r| \leq \frac{L}{2}\}} e^{-\frac{|x|^2}{4}} \, u(x,\tau)^2
\end{equation}
for all $L \in [L_1,\rho(\tau)]$ and $\tau \ll 0$, where $C<\infty$ is a numerical constant.
\end{proposition}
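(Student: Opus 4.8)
The plan is to derive both estimates from the evolution equation satisfied by the graph function $u$ over the cylinder $\Gamma$, exploiting the fact that $u$ is small and solves a parabolic equation which, to leading order, is the Ornstein-Uhlenbeck equation $\partial_\tau u = \mathcal L u + E$, where $E$ is a quadratic error term controlled by $\|u\|_{C^2}\,|\nabla u|$ and lower-order rotation terms coming from $A(\tau)$. The key structural input is that the problem decouples into the $\mathbb R^\ell$-directions (where the Gaussian weight $e^{-r^2/4}$ provides decay and the drift term $-\tfrac12 x_i\partial_i$ lives) and the compact $S^{n-\ell}$-directions (where the weight is essentially constant). So the relevant operator on the noncompact factor is the standard Gaussian-weighted Ornstein-Uhlenbeck operator, and the two claimed estimates are the familiar reverse-Poincaré (Caccioppoli) inequality and a cutoff/annulus decay estimate for such operators.

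First I would fix a cutoff $\varphi = \varphi(r/L)$ adapted to the radial variable $r$, with $\varphi = 1$ on $\{r \le L/2\}$ and $\varphi = 0$ on $\{r \ge L\}$, so $|\nabla \varphi| \le C/L$. Multiplying the evolution equation for $u$ by $\varphi^2 u\, e^{-|x|^2/4}$ and integrating over $\Gamma$, integration by parts moves one derivative off $u$; the drift term $-\tfrac12 x_i \partial_i u$ combines with $\partial_i(e^{-|x|^2/4})$ to produce the good weighted structure (this is the standard computation showing $\mathcal L$ is self-adjoint with a spectral gap in the weighted space), and one is left with
\[
\int_\Gamma \varphi^2 e^{-|x|^2/4}|\nabla u|^2 \le C\int_\Gamma |\nabla\varphi|^2 e^{-|x|^2/4} u^2 + C\int_\Gamma \varphi^2 e^{-|x|^2/4} u^2 + (\text{time derivative and error terms}).
\]
The time-derivative term $\tfrac{d}{d\tau}\int \varphi^2 e^{-|x|^2/4}u^2$ together with the $+1$ in $\mathcal L$ and the error $E$ must be absorbed; here I would use that $\|u(\cdot,\tau)\|_{C^4}$ is small (by \eqref{small_graph}) so the quadratic errors are a small multiple of $\int \varphi^2 e^{-|x|^2/4}|\nabla u|^2$, hence absorbable into the left side, and that the evolution near $\tau \to -\infty$ forces $\int \varphi^2 e^{-|x|^2/4}u^2$ to be almost monotone, so the $\tau$-derivative term has a sign (or is negligible) for $\tau \ll 0$. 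Since $|\nabla\varphi|$ is supported in $\{L/2 \le r \le L\}$ and $|\nabla\varphi|^2 \le C/L^2 \le C$, the first estimate follows, perhaps after enlarging the outer radius from $L/2$ to $L$ on the right (which only helps).

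For the second estimate (annulus decay), the mechanism is the Gaussian weight: on $\Gamma \cap \{L/2 \le r \le L\}$ one has $e^{-|x|^2/4} \le e^{-L^2/16}\cdot e^{-(\text{stuff})}$, but that crude bound is far stronger than the claimed $C/L^2$ and would not be uniform in the way stated — so instead I would run a Poincaré-type argument in the weighted space. Concretely, test the equation against $\psi^2 u\, e^{-|x|^2/4}$ with $\psi$ a cutoff that is $0$ on $\{r \le L/4\}$ and $1$ on $\{r \ge L/2\}$; the spectral gap of the Ornstein-Uhlenbeck operator on functions supported where $r$ is large (equivalently, the weighted Poincaré inequality $\int (r^2-C)\psi^2 u^2 e^{-|x|^2/4} \lesssim \int |\nabla(\psi u)|^2 e^{-|x|^2/4}$, which comes from $|\nabla r|=1$ and the drift) gives $\int_{\{r \ge L/2\}} r^2 u^2 e^{-|x|^2/4} \le C\int_{\{r \ge L/4\}}|\nabla u|^2 e^{-|x|^2/4} + \ldots$, and then the first (reverse-Poincaré) estimate converts the right side back to $\int_{\{r \le L/2\}} u^2 e^{-|x|^2/4}$, picking up the gain $L^{-2}$ from $r \ge L/2$ on the left. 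Chaining these, and using the truncation $\hat u$ to make all integrals finite, yields the stated inequality.

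The main obstacle I anticipate is the bookkeeping of the error terms and the time-derivative term: one must show that the $o(1)$ corrections to the Ornstein-Uhlenbeck equation (the quadratic graph-error $E$, the rotation matrix $A(\tau)$ contributions, and the mismatch between $\Gamma$ and the true cylinder in the compact directions) can genuinely be absorbed and do not spoil the constant $C$, and that the differential inequality for $\int \varphi^2 u^2 e^{-|x|^2/4}$ can be integrated from $-\infty$ without a boundary term. This is where the smallness \eqref{small_graph} and the structure of the fine-tuning rotation $A(\tau)$ from \eqref{eq_orth} (which kills precisely the modes that would otherwise obstruct the argument) are used in an essential way — essentially the same mechanism as in \cite[Section 4]{CHH_wing} and in the fine neck analysis of \cite{CHH,CHHW,ADS}, adapted to the higher-dimensional cylinder factor $\mathbb R^\ell$.
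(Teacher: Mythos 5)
There is a genuine gap: your proposal never uses the shrinker foliation from Section~\ref{sec_fol_barriers}, which is the essential ingredient of the actual proof (the paper's proof explicitly reduces to \cite[Prop.\ 2.3]{BC} and \cite[Prop.\ 4.4]{CHH_wing} ``with the only change that we now use the higher dimensional foliation from Definition~\ref{new_fol_def} and Lemma~\ref{foli_lemma}''). Those proofs do \emph{not} run on integration by parts and spectral gaps alone; they use the calibration identity $\mathrm{div}(e^{-|x|^2/4}\nu_{\mathrm{fol}})=0$ (here rather \eqref{negative_div}--\eqref{positive_div}) and the trapping of $\bar M_\tau$ between the barrier hypersurfaces $\Gamma_a, \tilde\Gamma_b$ to control the weighted mass of $u$ near the outer edge $r\sim\rho(\tau)$ of the graphical region. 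Without that geometric input, the decay estimate is simply false for a generic small graph $u$: take $u$ concentrated on the annulus $\{L/2\le r\le L\}$ and vanishing on $\{r\le L/2\}$ --- the right-hand side is zero while the left is not, and nothing in the Ornstein--Uhlenbeck structure forbids this at a fixed time $\tau$.

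Beyond that missing ingredient, the specific steps you write are circular and have the region nesting backwards. The cutoff Caccioppoli computation you describe (test against $\varphi^2 u\,e^{-|x|^2/4}$ with $\varphi$ supported in $\{r\le L\}$) yields $\int_{\{r\le L/2\}}|\nabla u|^2 e^{-|x|^2/4}\lesssim \int_{\{r\le L\}}u^2 e^{-|x|^2/4}$, i.e.\ the gradient over the \emph{smaller} region controlled by $u^2$ over the \emph{larger} --- the opposite nesting of the claimed first estimate. To upgrade this to the stated inverse Poincar\'e you need the second (annulus-decay) estimate, but your argument for the second estimate in turn invokes the first to convert $\int_{\{r\ge L/4\}}|\nabla u|^2 e^{-|x|^2/4}$ back into $\int_{\{r\le L/2\}}u^2 e^{-|x|^2/4}$; Caccioppoli alone never bounds a gradient integral over a larger region by $u^2$ over a smaller one, so the loop does not close. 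There is also the unresolved $\tfrac{d}{d\tau}\int\varphi^2 u^2 e^{-|x|^2/4}$ term produced by testing the \emph{evolution} equation, which you assert ``has a sign or is negligible'' without justification; in the cited proofs this issue does not arise because the argument is run at a fixed time slice using the calibration, not by integrating the parabolic equation. The conclusion is that while your instinct (Caccioppoli plus an annulus decay) is structurally sound, the decay mechanism you propose is the wrong one; you should instead trap $\bar M_\tau$ between $\Gamma_a$ and $\tilde\Gamma_b$ from Lemma~\ref{foli_lemma}, apply the divergence identity on the region between $\bar M_\tau$ and $\Gamma$, and read off the annulus decay from the resulting flux comparison, as in \cite{ADS,BC,CHH_wing}.
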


\begin{proof}
The proof is similar to the ones of \cite[Proposition 2.3]{BC} and Proposition \cite[Proposition 4.4]{CHH_wing}, with the only change that we now use the higher dimensional foliation from Definition \ref{new_fol_def} and Lemma \ref{foli_lemma}.
\end{proof}

Let $\mathcal{H}$ be the Hilbert space of all functions $f$ on $\Gamma$ such that
\begin{equation}\label{def_norm}
\|f\|_{\mathcal{H}}^2 = \int_\Gamma \frac{1}{(4\pi)^{n/2}} e^{-\frac{|x|^2}{4}} \, f^2 < \infty.
\end{equation}

\begin{proposition}[evolution equation and error estimate]
\label{Error hat.u-PDE 1}
The truncated graph function $\hat{u}(x,\tau) = u(x,\tau) \, \chi \big ( \frac{r}{\rho(\tau)} \big )$ satisfies the evolution equation
\begin{equation}
\partial_\tau \hat{u} = \mathcal{L} \hat{u}  + \hat{E}+\langle A(\tau)x,\nu_\Gamma\rangle \chi\!\left(\frac{r}{\rho(\tau)}\right),
\end{equation}
where $\mathcal{L}  = \Delta_\Gamma  - \tfrac{1}{2} \,  x^{\text{\rm tan}}\cdot \nabla^\Gamma  +1$,
and where the error term $\hat E$ and the rotation $A=S'S^{-1}$ satisfy the estimates
\begin{equation}
\|\hat{E}\|_{\mathcal{H}} \leq C\rho^{-1} \big( \|\hat{u}\|_{\mathcal{H}}+|A(\tau)|\big)\quad \textrm{and} \quad |A(\tau)|\leq C\rho^{-1}\|\hat{u}\|_{\mathcal{H}}
\end{equation}
for $\tau \ll 0$.
\end{proposition}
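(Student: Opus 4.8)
\textbf{Proof proposal for Proposition \ref{Error hat.u-PDE 1}.}

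The plan is to start from the renormalized mean curvature flow equation for the graph function $u$ over the cylinder $\Gamma$, then multiply by the cutoff $\chi(r/\rho(\tau))$ and commute it through the linear and nonlinear terms, tracking all commutator errors. Recall that for a normal graph over $\Gamma$, the renormalized flow equation has the schematic form $\partial_\tau u = \mathcal{L}u + Q[u] + \langle A(\tau)x, \nu_\Gamma\rangle$, where $\mathcal{L} = \Delta_\Gamma - \tfrac12 x^{\mathrm{tan}}\cdot\nabla^\Gamma + 1$ is the linearized operator, $Q[u]$ is a quadratically small nonlinearity (depending on $u, \nabla u, \nabla^2 u$, vanishing to second order at $u=0$), and the last term accounts for the time-dependent rotation $S^{X_0}(\tau)$ applied to the flow — this is exactly the term appearing in \cite[Section 4]{CHH_wing} and \cite{CHH,CHHW}, adapted to the cylinder $\mathbb{R}^\ell\times S^{n-\ell}$. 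First I would record this equation carefully, noting that the derivation is essentially identical to the neck/bubble-sheet case once one works on $\Gamma$ with the splitting $x=(x',x'')$.

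Next, multiplying by $\chi := \chi(r/\rho(\tau))$, I would write $\partial_\tau\hat u = \chi\,\partial_\tau u + u\,\partial_\tau\chi$. The term $\chi\,\partial_\tau u$ becomes $\chi\mathcal{L}u + \chi Q[u] + \langle Ax,\nu_\Gamma\rangle\chi$, and I move the cutoff inside $\mathcal{L}$ at the cost of commutator terms: $\chi\mathcal{L}u = \mathcal{L}(\chi u) - [\mathcal{L},\chi]u = \mathcal{L}\hat u - (\Delta_\Gamma\chi)u - 2\nabla_\Gamma\chi\cdot\nabla_\Gamma u + \tfrac12(x^{\mathrm{tan}}\cdot\nabla_\Gamma\chi)u$. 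The crucial point is that $\chi=\chi(r/\rho)$ with $r$ the radial coordinate in $\mathbb{R}^\ell$, so every derivative of $\chi$ carries a factor $\rho^{-1}$ (or $\rho^{-2}$ for $\Delta_\Gamma\chi$) and is supported in the annular region $\rho/2\le r\le\rho$. The term $u\,\partial_\tau\chi = u\chi'(r/\rho)\cdot(-r\rho'/\rho^2)$ is supported in the same annulus, and since $-\rho\le\rho'\le 0$ one has $|r\rho'/\rho^2|\le \rho\cdot\rho/\rho^2 = 1$ — wait, one needs the sharper bound: on the support $r\le\rho$, $|r\rho'/\rho^2|\le |\rho'|/\rho \le 1$, which is merely $O(1)$, so this term needs the additional smallness coming from $u$ being small on the annulus. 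This is where the inverse-Poincaré-type control from the graphical estimate \eqref{small_graph}, namely $\|u\|_{C^4(\Gamma\cap B_{2\rho})}\le\rho^{-2}$, combined with the annular localization, lets one bound the $\mathcal{H}$-norm of all these commutator/cutoff-derivative terms by $C\rho^{-1}\|\hat u\|_{\mathcal{H}}$ after integrating against the Gaussian weight (the Gaussian weight $e^{-|x|^2/4}$ on the annulus $\{r\ge\rho/2\}$ gives an extra decay factor $e^{-\rho^2/16}$ which is more than enough, but even without it the $\rho^{-k}$ factors suffice). Collecting all these terms into $\hat E := \chi Q[u] - (\Delta_\Gamma\chi)u - 2\nabla_\Gamma\chi\cdot\nabla_\Gamma u + \tfrac12(x^{\mathrm{tan}}\cdot\nabla_\Gamma\chi)u + u\,\partial_\tau\chi$ yields the stated evolution equation with the stated structure.

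The remaining work is the two estimates. For $\|\hat E\|_{\mathcal{H}}\le C\rho^{-1}(\|\hat u\|_{\mathcal{H}}+|A(\tau)|)$: the quadratic term $\chi Q[u]$ is controlled using $|Q[u]|\le C(|u|+|\nabla u|)(|u|+|\nabla u|+|\nabla^2 u|)$ together with $\|u\|_{C^4}\le\rho^{-2}$, so $\chi Q[u]$ is pointwise bounded by $C\rho^{-2}(|\hat u| + |\nabla\hat u| + \ldots)$ on the support of $\chi$; applying the first inequality of Proposition \ref{Gaussian density analysis} (the inverse Poincaré inequality) to convert $\|\nabla u\|_{\mathcal{H}}$-type quantities into $\|\hat u\|_{\mathcal{H}}$ closes this. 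The cutoff-derivative terms are handled by the annular localization and the second inequality of Proposition \ref{Gaussian density analysis}, which says the $L^2_{\mathrm{Gauss}}$-mass of $u$ on the annulus is $O(\rho^{-2})$ times the total, again with the factor $\rho^{-1}$ (or better) to spare. For the estimate $|A(\tau)|\le C\rho^{-1}\|\hat u\|_{\mathcal{H}}$: differentiate the orthogonality condition \eqref{eq_orth} in $\tau$, use the evolution equation just derived, integrate by parts, and observe that $\mathcal{L}$ applied to the linear functions $\langle Ax,\nu_\Gamma\rangle$ is proportional to those same functions (they are eigenfunctions — more precisely the generators of rotations are neutral/unstable modes), so the principal term in the resulting linear algebraic system for $A(\tau)$ has a coefficient matrix bounded below, and the right-hand side consists of inner products of $\hat E$ and $\hat u$ against $\langle Ax,\nu_\Gamma\rangle e^{-|x|^2/4}$, each of which is $O(\rho^{-1}\|\hat u\|_{\mathcal{H}})$; solving the system gives the bound, and one checks the two estimates are not circular since the $\hat E$-bound already has $|A(\tau)|$ only multiplied by $\rho^{-1}$, so for $\tau\ll 0$ the $|A(\tau)|$ terms can be absorbed.

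The main obstacle is bookkeeping rather than conceptual: one must verify that the cutoff $\chi(r/\rho(\tau))$, which depends on the $\mathbb{R}^\ell$-radial variable rather than on a single axial coordinate as in the neck case, still commutes through $\Delta_\Gamma = \Delta_{\mathbb{R}^\ell} + \tfrac{1}{2(n-\ell)}\Delta_{S^{n-\ell}}$ and through the drift term $-\tfrac12 x^{\mathrm{tan}}\cdot\nabla^\Gamma$ with commutators of the claimed size, and that the condition $-\rho(\tau)\le\rho'(\tau)\le 0$ is exactly what is needed to bound $\partial_\tau\chi$. Since $r$ involves only the flat $\mathbb{R}^\ell$-directions, $\nabla_\Gamma\chi$ and $\Delta_\Gamma\chi$ only see $\nabla_{\mathbb{R}^\ell}$ and $\Delta_{\mathbb{R}^\ell}$, and the relevant bounds $|\nabla_{\mathbb{R}^\ell} r| = 1$, $|\Delta_{\mathbb{R}^\ell} r| = (\ell-1)/r \le C\rho^{-1}$ on the support, and $|x^{\mathrm{tan}}\cdot\nabla_\Gamma\chi| = |x'\cdot\nabla_{\mathbb{R}^\ell}\chi| \le |x'|\rho^{-1}|\chi'| \le C$ (again needing the $u$-smallness on the annulus for the final $\rho^{-1}$ gain) go through exactly as in \cite[Proposition 4.5]{CHH_wing}. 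I therefore expect the proof to be a faithful adaptation of that argument, and would present it by reducing to the cited results and indicating only the modifications forced by the higher-dimensional flat factor.
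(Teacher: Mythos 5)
Your proposal is correct and follows essentially the same approach as the paper, which in turn simply defers to Section 4 of \cite{CHH_wing} (adapted from the neck/bubble-sheet setting to the cylinder $\mathbb{R}^\ell\times S^{n-\ell}$, with Proposition \ref{Gaussian density analysis} supplying the annular and gradient control). Your detailed reconstruction — commuting $\chi$ through $\mathcal{L}$, estimating the quadratic nonlinearity via the pointwise $C^4$ bound plus the inverse Poincar\'e inequality, bounding the cutoff and $\partial_\tau\chi$ terms by localizing to the annulus $\{\rho/2\leq r\leq\rho\}$ where the second inequality of Proposition \ref{Gaussian density analysis} contributes the needed $\rho^{-1}$ factor, and finally extracting the $|A(\tau)|$ bound by differentiating the orthogonality relation \eqref{eq_orth} — is a faithful and complete account of that argument.
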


\begin{proof}
The proof is similar as in \cite[Section 4]{CHH_wing}, where we now use Proposition \ref{Gaussian density analysis} to control the error terms.
\end{proof}
The cylindrical representation of the operator $\mathcal L$ is explicitly given by
\begin{equation}
\mathcal L=\triangle_{\mathbb{R}^{\ell}}+\frac{1}{2(n-\ell)}\triangle_{S^{n-\ell}}-\frac{1}{2}\sum_{i=1}^{\ell}x_{i}\frac{\partial}{\partial x_{i}}+1
\end{equation}
Analysing the spectrum of $\mathcal L$, the Hilbert space $\mathcal H$ from \eqref{def_norm} can be decomposed as
\begin{equation}
\mathcal H = \mathcal{H}_+\oplus \mathcal{H}_0\oplus \mathcal{H}_-,
\end{equation}
where
\begin{align}
\mathcal{H}_+ =\text{span}\{1, x_1,\ldots, x_{n+1}\},\label{basis_hplus}
\end{align}
and
\begin{align}
\mathcal{H}_0 =\text{span}\left(\{x_i^2-2\}_{1\leq i\leq \ell} \cup \{ x_i x_j\}_{1\leq i < j \leq \ell} \cup \{ x_i x_J\}_{1\leq i \leq \ell < J\leq n+1}\right),\label{basis_hneutral}
\end{align}
and where $x_{1}, \dots, x_{n+1}$ denotes the restriction of the Euclidean coordinate functions to the cylinder $\Gamma$. Moreover, we have
\begin{align}
&\langle \mathcal{L} f,f \rangle_{\mathcal{H}} \geq \frac{1}{2} \, \|f\|_{\mathcal{H}}^2 & \text{\rm for $f \in \mathcal{H}_+$,} \nonumber\\
&\langle \mathcal{L} f,f \rangle_{\mathcal{H}} = 0 & \text{\rm for $f \in \mathcal{H}_0$,} \\
&\langle \mathcal{L} f,f \rangle_{\mathcal{H}} \leq -\frac{1}{n-\ell} \, \|f\|_{\mathcal{H}}^2 & \text{\rm for $f \in \mathcal{H}_-$.} \nonumber
\end{align}
Consider the functions
\begin{align}
&U_+(\tau) := \|P_+ \hat{u}(\cdot,\tau)\|_{\mathcal{H}}^2, \nonumber\\
&U_0(\tau) := \|P_0 \hat{u}(\cdot,\tau)\|_{\mathcal{H}}^2,\label{def_U_PNM} \\
&U_-(\tau) := \|P_- \hat{u}(\cdot,\tau)\|_{\mathcal{H}}^2, \nonumber
\end{align}
where $P_+, P_0, P_-$ denote the orthogonal projections to $\mathcal{H}_+,\mathcal{H}_0,\mathcal{H}_-$, respectively.

\begin{proposition}[Merle-Zaag alternative]\label{mz.ode.fine.neck}
For $\tau\to -\infty$, either the neutral mode is dominant, i.e.
\begin{equation}
U_-+U_+=o(U_0),
\end{equation}
or the unstable mode is dominant, i.e.
\begin{equation}
U_-+U_0\leq C\rho^{-1}U_+.
\end{equation}
\end{proposition}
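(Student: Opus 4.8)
The plan is to apply the Merle--Zaag ODE lemma to the system of differential inequalities governing $U_+$, $U_0$, and $U_-$, which in turn come from differentiating the definitions in \eqref{def_U_PNM} and using the evolution equation from Proposition \ref{Error hat.u-PDE 1}. First I would compute $\tfrac{d}{d\tau}U_\bullet$ for $\bullet\in\{+,0,-\}$. Writing $\hat u = P_+\hat u + P_0\hat u + P_-\hat u$ and using $\partial_\tau \hat u = \mathcal L\hat u + \hat E + \langle A(\tau)x,\nu_\Gamma\rangle\chi(r/\rho)$, one gets for instance
\begin{equation*}
\tfrac{1}{2}\tfrac{d}{d\tau}U_+ = \langle \mathcal L P_+\hat u, P_+\hat u\rangle_{\mathcal H} + \langle P_+(\hat E + \langle A x,\nu_\Gamma\rangle\chi), P_+\hat u\rangle_{\mathcal H},
\end{equation*}
and similarly for $U_0$ and $U_-$; here I use that $\mathcal L$ commutes with the spectral projections (up to the error caused by the cutoff, which is already absorbed into $\hat E$). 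The spectral gap estimates $\langle \mathcal L f,f\rangle \ge \tfrac12\|f\|^2$ on $\mathcal H_+$, $=0$ on $\mathcal H_0$, and $\le -\tfrac{1}{n-\ell}\|f\|^2$ on $\mathcal H_-$ control the main terms.

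Next I would bound the error contributions. The key point is the crucial extra input that the orthogonality condition \eqref{eq_orth} forces $P_+$ of the rotation term $\langle A(\tau)x,\nu_\Gamma\rangle\chi(r/\rho)$ to be small: since $\{\langle Ax,\nu_\Gamma\rangle : A\in o(n+1)\}$ spans exactly the span of $\{x_ix_J\}_{i\le \ell<J}$ inside $\mathcal H_0$, and \eqref{eq_orth} says $\hat u$ is orthogonal to this space, one sees that the rotation term lies in $\mathcal H_0$ and moreover its pairing against $\hat u$ is controlled. Combined with the estimates $\|\hat E\|_{\mathcal H}\le C\rho^{-1}(\|\hat u\|_{\mathcal H}+|A|)$ and $|A(\tau)|\le C\rho^{-1}\|\hat u\|_{\mathcal H}$, and Cauchy--Schwarz, all error terms are bounded by $C\rho^{-1}(U_++U_0+U_-)$. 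This yields the system
\begin{align*}
\tfrac{d}{d\tau}U_+ &\ge U_+ - C\rho^{-1}(U_++U_0+U_-),\\
\big|\tfrac{d}{d\tau}U_0\big| &\le C\rho^{-1}(U_++U_0+U_-),\\
\tfrac{d}{d\tau}U_- &\le -\tfrac{2}{n-\ell}U_- + C\rho^{-1}(U_++U_0+U_-),
\end{align*}
valid for $\tau\ll 0$, where $\rho=\rho(\tau)\to\infty$ so that $C\rho^{-1}\to 0$.

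Finally I would invoke the Merle--Zaag lemma (the ODE dichotomy, as in \cite{CHH,CHHW,CHH_wing} and originally Merle--Zaag) applied to this system: whenever one has three nonnegative functions on a half-line $(-\infty,\tau_0]$ with decaying coefficient $\eps(\tau)=C\rho(\tau)^{-1}\to 0$ satisfying exactly inequalities of the above shape, either $U_-+U_+ = o(U_0)$ as $\tau\to-\infty$, or $U_-+U_0 \le C\eps(\tau)U_+$ for $\tau\ll 0$. One also needs to observe beforehand that $\hat u$ does not vanish identically (which uses that $\mathcal M$ is not self-similarly shrinking), so the dichotomy is nonvacuous. Translating $\eps(\tau)=C\rho^{-1}$ back, the second alternative reads $U_-+U_0\le C\rho^{-1}U_+$, which is exactly the claimed statement.

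The main obstacle is the careful bookkeeping in deriving the differential inequalities with the correct error bounds --- in particular verifying that the rotation term $\langle A(\tau)x,\nu_\Gamma\rangle\chi(r/\rho)$, together with the commutator between $\mathcal L$ and the cutoff, only contributes at order $\rho^{-1}$ after using the orthogonality \eqref{eq_orth} and the bound on $|A(\tau)|$; once the system is in hand, the Merle--Zaag lemma is a black box. A secondary subtlety is handling the truncation: the projections $P_\pm,P_0$ are defined on all of $\mathcal H$ while $\hat u$ is compactly supported, so one must check that the cutoff-generated terms are genuinely lower order, which is where Proposition \ref{Gaussian density analysis} (the inverse Poincaré inequality) enters, exactly as in \cite[Section 4]{CHH_wing}.
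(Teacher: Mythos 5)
Your proposal is correct and follows essentially the same route as the paper: derive the differential inequalities \eqref{U_PNM_system} for $U_+,U_0,U_-$ from Proposition \ref{Error hat.u-PDE 1} together with the orthogonality gauge \eqref{eq_orth} and the spectral gaps of $\mathcal L$, and then apply the Merle--Zaag ODE lemma. The paper's proof is simply terser (it records the system and invokes the lemma), whereas you spell out the bookkeeping — in particular the observation that $\{\langle Ax,\nu_\Gamma\rangle\}$ spans $\{x_i x_J\}_{i\le\ell<J}\subset\mathcal H_0$ so the rotation term only feeds into $U_0$ up to cutoff errors, and the nonvanishing of $\hat u$ from the non-self-shrinking hypothesis — all of which is consistent with what the paper relies on.
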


\begin{proof}
Using Proposition \ref{Error hat.u-PDE 1} (evolution equation and error estimate), we obtain
\begin{align}
&\frac{d}{d\tau} U_+(\tau) \geq U_+(\tau) - C\rho^{-1} \, (U_+(\tau) + U_0(\tau) + U_-(\tau)), \nonumber\\
&\Big | \frac{d}{d\tau} U_0(\tau) \Big | \leq C\rho^{-1} \, (U_+(\tau) + U_0(\tau) + U_-(\tau)), \label{U_PNM_system}\\
&\frac{d}{d\tau} U_-(\tau) \leq -\frac{2}{n-\ell} U_-(\tau) + C\rho^{-1} \, (U_+(\tau) + U_0(\tau) + U_-(\tau)). \nonumber
\end{align}
Hence, the Merle-Zaag ODE lemma \cite{MZ,ChoiMant} implies the assertion.
\end{proof}

\bigskip

\section{Fine cylindrical analysis in the neutral mode}\label{sec_neutral_mode}

In this section, we prove our main theorems in the case where the neutral mode is dominant. Namely, throughout this section we assume that our truncated graph function $\hat u(\cdot,\tau)$ satisfies
\begin{equation}\label{neutral_dom}
U_-+U_+=o(U_0).
\end{equation}
Using this together with the evolution inequality \eqref{U_PNM_system}, given any $\delta>0$, we see that
$ | \tfrac{d}{d\tau} \log U_0 |\leq \tfrac{1}{2}\delta$ for sufficiently large $-\tau$, and thus
\begin{equation}\label{rough decay}
U_0(\tau) \geq e^{\delta\tau}
\end{equation}
for sufficiently large $-\tau$.

\begin{proposition}\label{L^2 convergence lemma}
Every sequence $\{\tau_i\}$ converging to $-\infty$ has a subsequence $\{\tau_{i_m}\}$ such that
\begin{equation}
\lim_{\tau_{i_m} \to -\infty}\frac{\hat u(\cdot,\tau_{i_m})}{\|\hat u(\cdot,\tau_{i_m})\|_{\mathcal{H}}}= {x'}Qx'^T-2\mathrm{tr}\, Q,
\end{equation}
in $\mathcal{H}$-norm, where $Q=\{q_{ij}\}$ is a nontrivial semi-negative definite $\ell\times \ell$-matrix, and $x'=(x_{1}, \dots, x_{\ell})$.
\end{proposition}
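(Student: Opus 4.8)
## Proof Proposal

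The plan is to extract the limit along a subsequence from a compactness argument in $\mathcal{H}$, identify the limit as a neutral eigenfunction using the Merle-Zaag alternative and the fine-tuning normalization, and then pin down its precise algebraic form using convexity.

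First I would normalize. Set $v(\cdot,\tau) := \hat u(\cdot,\tau)/\|\hat u(\cdot,\tau)\|_{\mathcal{H}}$, so $\|v(\cdot,\tau)\|_{\mathcal H}=1$. By the neutral dominance hypothesis \eqref{neutral_dom}, we have $\|P_+ v\|_{\mathcal H}^2 + \|P_- v\|_{\mathcal H}^2 = o(1)$ as $\tau\to-\infty$, so $v(\cdot,\tau)$ is asymptotically contained in the finite-dimensional space $\mathcal H_0$. Along any sequence $\tau_i\to-\infty$, the projections $P_0 v(\cdot,\tau_i)$ form a bounded sequence in the finite-dimensional space $\mathcal H_0$, hence after passing to a subsequence $\tau_{i_m}$ they converge to some $w\in\mathcal H_0$ with $\|w\|_{\mathcal H}=1$ (using $\|P_0v\|_{\mathcal H}\to 1$); since $\|v - P_0v\|_{\mathcal H}\to 0$, we get $v(\cdot,\tau_{i_m})\to w$ in $\mathcal H$. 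Thus the limit exists along a subsequence and is a unit-norm element of $\mathcal H_0$; it is in particular nontrivial. Writing a general element of $\mathcal H_0$ from the basis \eqref{basis_hneutral}, $w$ has the form ${x'}Qx'^T - 2\,\mathrm{tr}\,Q + \sum_{1\le i\le \ell < J}c_{iJ}x_ix_J$ for some symmetric $\ell\times\ell$ matrix $Q$ and coefficients $c_{iJ}$.

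Next I would kill the mixed terms $x_ix_J$ and argue semi-negativity. The mixed coefficients vanish because of the fine-tuning rotation: recall that $S^{X_0}(\tau)$ was chosen so that \eqref{eq_orth} holds, i.e. $\hat u(\cdot,\tau)$ is $\mathcal{H}$-orthogonal to all functions $\langle Ax,\nu_\Gamma\rangle$ with $A\in o(n+1)$; the relevant rotations (those mixing an $x_i$, $i\le\ell$, with an $x_J$, $J>\ell$) produce exactly the functions spanning $\{x_ix_J\}$ up to normalization, so $P_0 v$ has no component along $\{x_ix_J\}_{1\le i\le\ell<J}$, and hence neither does $w$. Therefore $w = {x'}Qx'^T - 2\,\mathrm{tr}\,Q$. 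For the sign of $Q$, I would use the convexity of $K_{t_0}$: the renormalized hypersurface $\tilde M_\tau^{X_0}$ is convex, so its graph function $u$ over the cylinder, restricted to the flat directions, is (up to the curvature of $\Gamma$ in the sphere factor) a concave function of $x'$ for each fixed angular variable, in the relevant region. Passing this concavity through the limit and the truncation shows the quadratic form ${x'}Qx'^T$ is concave, i.e. $Q$ is semi-negative definite. This is where I would be careful: concavity of $u$ in the flat variables needs the correct second-variation computation relating $D^2_{x'}u$ to the second fundamental form, and one has to check the cutoff $\chi(r/\rho(\tau))$ does not destroy it on the region where it equals $1$; the cleanest route is to note that on any fixed ball the cutoff is eventually $1$, so $w$ restricted to that ball comes from a genuine limit of convex graphs.

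The main obstacle I expect is precisely this last point — transferring the geometric convexity of $\bar M_\tau^{X_0}$ (after the rotation $S^{X_0}(\tau)$, which is only a rotation and preserves convexity) into a clean analytic statement that the quadratic limit $Q$ is semi-negative definite, rather than merely that $w$ is some neutral eigenfunction. One has to rule out, for instance, $Q$ having mixed signature, which would correspond to a saddle-type bending that is incompatible with convexity. I would handle this by working on a fixed large ball $B_R$, where for $\tau$ sufficiently negative $\hat u = u$ and $\tilde M_\tau^{X_0}\cap (B_R\times S^{n-\ell})$ is a graph of a convex hypersurface over $\Gamma$; the normalized limit $w|_{B_R}$ is then a locally uniform $C^2$ limit (by the graphical estimates \eqref{small_graph} and standard parabolic regularity) of appropriately rescaled convex graphs, and a rescaled limit of convex functions whose graphs become flat is concave, giving $Q\le 0$ on $B_R$; since $R$ is arbitrary, $Q\le 0$. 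Nontriviality of $Q$ follows from $\|w\|_{\mathcal H}=1$ together with the already-established absence of the $\{x_ix_J\}$ component.
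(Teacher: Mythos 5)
Your first three steps---normalizing $v=\hat u/\|\hat u\|_{\mathcal H}$, using neutral dominance to push the mass into the finite-dimensional space $\mathcal H_0$ and extract a convergent subsequence, and invoking the orthogonality condition \eqref{eq_orth} to kill the $\{x_ix_J\}_{1\le i\le\ell<J}$ components---coincide with what the paper does. The divergence is in how semi-negativity of $Q$ is established. You argue directly that the radial graph function $u(\cdot,\omega,\tau)$ is concave in $x'$ for each fixed $\omega$ (which is indeed exact, not ``up to the curvature of $\Gamma$'': since $\tilde K_\tau$ is convex with the axis $\mathbb{R}^\ell\times\{0\}$ in its interior on the graphical region, the radial function $r(x',\omega)=\sup\{s:(x',s\omega)\in\tilde K_\tau\}$ is concave in $x'$, hence so is $u=r-\sqrt{2(n-\ell)}$; you don't need a second-fundamental-form computation, and the hedging is unnecessary). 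You then pass concavity through the normalized limit. This works: dividing by the positive scalar $\|\hat u(\cdot,\tau)\|_{\mathcal H}$ preserves concavity, and concave functions converging in $L^2_{\mathrm{loc}}$ (which you get from $\mathcal H$-convergence restricted to a fixed ball where the cutoff is $1$) converge locally uniformly to a concave limit; you do not actually need the $C^2$ convergence you invoke, which would require an extra parabolic-regularity step to justify after dividing by the decaying norm.

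The paper takes a different route: it applies Brunn's concavity principle to the cross-sectional areas $\mathcal A(x',\tau)=\tfrac{1}{n+1-\ell}\int_{S^{n-\ell}}(\sqrt{2(n-\ell)}+u)^{n+1-\ell}\,dS(\omega)$, so $x'\mapsto\mathcal A^{1/(n-\ell+1)}$ is concave, and integrates this against a comparison of boxes to obtain an integral inequality for $\mathcal Q$ that directly forces each diagonal entry $q_{ii}\le 0$ after diagonalizing $Q$. The advantage of the paper's argument is that it works with a single integral quantity and the weak ($L^1_{\mathrm{loc}}$) convergence that the $\mathcal H$-convergence provides, sidestepping the need to discuss pointwise or locally uniform convergence of $u/\|\hat u\|_{\mathcal H}$ and the a.e.-in-$\omega$ Fubini reduction. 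Your approach is more elementary and arguably more transparent geometrically (it exhibits the concavity fiber by fiber), but it does require the extra step of upgrading $L^2$ convergence of normalized concave functions to locally uniform convergence on the relevant region; once you make that step explicit, the argument is sound and gives the same conclusion.
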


\begin{proof}
Recalling \eqref{basis_hneutral} and using the orthogonality condition \eqref{eq_orth}, we see that
\begin{equation}
P_0\hat u \in \text{span}\left(\{x_i^2-2\}_{1\leq i\leq \ell} \cup \{ x_i x_j\}_{1\leq i < j \leq \ell} \right) \subset \mathcal{H}_0.
\end{equation}
Therefore, every sequence $\tau_i \to -\infty$ has a subsequence $\{\tau_{i_m}\}$ such that
\begin{equation}\label{L^2 convergence equation}
\lim_{\tau_{i_m} \to -\infty}\frac{\hat u(\cdot,\tau_{i_m})}{\|\hat u(\cdot,\tau_{i_m})\|_{\mathcal{H}}}=\mathcal{Q}(x')
\end{equation}
with respect to the ${\mathcal{H}}$-norm, where $x'=(x_{1}, \dots, x_{\ell})$, and
\begin{equation}
\mathcal{Q}(x'):= {x'}Qx'^T-2\mathrm{tr}\, Q
\end{equation}
for some nontrivial matrix $Q=\{q_{ij}\}$. After an orthogonal change of coordinates in the $x'$-plane, we can assume that $q_{ij}=0$ for $i\not=j$. Let us show that $q_{11} \leq 0$ (the same argument yields $q_{ii}\leq 0$ for $1\leq i \leq \ell$).\\
We denote by $\tilde{K}_\tau$ the region enclosed by $\tilde{M}_\tau$ and denote by $\mathcal{A}(x',\tau)$ the area of the cross section of $\tilde{K}_\tau$ with fixed $x'$ coordinates.  Explicitly,
\begin{align}\label{Area definition}
\mathcal{A}(x', \tau)=\frac{1}{n+1-\ell}\int_{S^{n-\ell}}\big(\sqrt{2(n-\ell)}+u(x',\omega,\tau)\big)^{n+1-\ell} dS(\omega),
\end{align}
which can be expanded as
\begin{align}\label{Area expansion}
\mathcal{A}(x', \tau)=&|B^{n+1-\ell}(\sqrt{2(n-\ell)})|+\left(2(n-\ell)\right)^{\frac{n-\ell}{2}}\int_{S^{n-\ell}}u(x',\omega, \tau)dS(\omega)\nonumber\\
&+\sum_{k=2}^{n-\ell+1}c_{k}(n,\ell)\int_{S^{n-\ell}}u^{k}(x',\omega, \tau)dS(\omega).
\end{align}
By Brunn's concavity principle, the function $x'\mapsto{\mathcal{A}^{\frac{1}{n-\ell+1}}(x', \tau)}$ is concave.  In particular, we have
\begin{multline}
{\mathcal{A}^{\frac{1}{n-\ell+1}}(x_1-2,x_2,\dots,x_{\ell}, \tau)}+ {\mathcal{A}^{\frac{1}{n-\ell+1}}(x_1+2,x_2,\dots,x_{\ell}, \tau)}\\
\leq 2 {\mathcal{A}^{\frac{1}{n-\ell+1}}(x_1,x_2,\dots,x_{\ell}, \tau)}.
\end{multline}
This implies
 \begin{equation}\label{Brunn-Minkowski}
\int_{[-1,1]^{\ell}}\!\!\!\!\!\!\mathcal{A}^{\frac{1}{n-\ell+1}}(x', \tau)\, dx'\geq \frac{1}{3} \int_{-3}^{3}\int_{-1}^1\ldots \int_{-1}^1 \mathcal{A}^{\frac{1}{n-\ell+1}}(x', \tau)\, dx_1\dots dx_{\ell}.
\end{equation}
Combining this with \eqref{L^2 convergence equation} and \eqref{Area expansion} yields
\begin{equation}
\int_{[-1,1]^{\ell}}\mathcal{Q}(x')\, dx'\geq \frac{1}{3} \int_{-3}^{3}\int_{-1}^1\ldots \int_{-1}^1 \mathcal{Q}(x')\, dx_1\dots dx_{\ell}.
\end{equation}
This implies $q_{11} \leq 0$, and thus concludes the proof.
\end{proof}

\begin{theorem}\label{thm_blowdown_neutral}
If the neutral mode is dominant, then the blowdown $\check K$ satisfies
\begin{equation}
\check{K}\subseteq \mathrm{ker}\, Q.
\end{equation}
for any $Q$ from Proposition  \ref{L^2 convergence lemma}.
In particular, $\dim \check{K}\leq \ell -1\leq k-2$.
\end{theorem}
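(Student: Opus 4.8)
The plan is to establish the inclusion $\check K\subseteq \ker Q$ by a barrier argument: if some direction $v$ with $\langle Qv,v\rangle<0$ were to lie in the blowdown $\check K$, then the time-slices $K_{t_0}$ would be unbounded in the $v$-direction, but the quadratic inward bending recorded by $Q$ would force the cross sections to shrink, producing a contradiction. First I would reduce to the diagonalized picture from Proposition \ref{L^2 convergence lemma}, so that $Q=\mathrm{diag}(q_{11},\ldots,q_{\ell\ell})$ with $q_{ii}\leq 0$, and it suffices to show that if $q_{11}<0$ then $e_1\notin \check K$, i.e. the projection of $K_{t_0}$ to the $x_1$-axis is bounded. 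By scaling invariance of the blowdown (it is independent of $t_0$ by Section \ref{sec_coarse}) and the convergence $\hat u(\cdot,\tau_{i_m})/\|\hat u(\cdot,\tau_{i_m})\|_{\mathcal H}\to \mathcal Q(x')={x'}Qx'^T-2\,\mathrm{tr}\,Q$, I would extract quantitative information along the sequence $\tau_{i_m}\to-\infty$: the graph function $u$ of the renormalized flow $\tilde M_{\tau}$ satisfies, on the region $r\lesssim \rho(\tau_{i_m})$, the approximate identity $u(x',\omega,\tau_{i_m})\approx \|\hat u\|_{\mathcal H}\,(q_{11}x_1^2+\cdots+q_{\ell\ell}x_\ell^2-2\,\mathrm{tr}\,Q)$, with $q_{11}<0$ strictly. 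In particular the section $x_1\mapsto$ (radius of $\tilde M_{\tau_{i_m}}$ over $(x_1,0,\ldots,0)$) is, to leading order, a downward parabola in $x_1$.

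The key analytic step is to convert this infinitesimal quadratic bending into a genuine geometric bound on $K_{t_0}$. I would work with the cross-sectional area $\mathcal A(x',\tau)$ from \eqref{Area definition}, whose $(n-\ell+1)$-th root is concave in $x'$ by Brunn's principle, exactly as in Proposition \ref{L^2 convergence lemma}. The point is that concavity of $\mathcal A^{1/(n-\ell+1)}$ plus the fact that at scale $\rho(\tau_{i_m})$ (which tends to $\infty$) the function $\mathcal A$ is strictly smaller than its value at $x'=0$ along the $x_1$-axis by a definite amount proportional to $q_{11}\,\rho(\tau_{i_m})^2\,\|\hat u(\cdot,\tau_{i_m})\|_{\mathcal H}$ — forces, by concavity, that $\mathcal A^{1/(n-\ell+1)}(x',\tau_{i_m})$ hits zero at some finite $x_1=X(\tau_{i_m})$ with $X(\tau_{i_m})\lesssim \rho(\tau_{i_m})$, uniformly. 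Translating back from renormalized to original coordinates, $\tilde M_\tau = S(\tau)e^{\tau/2}(M_{t_0-e^{-\tau}}-x_0)$, a zero of the cross-sectional area at renormalized height $X(\tau_{i_m})$ corresponds to the time-slice $M_{t_0-e^{-\tau_{i_m}}}$ having bounded extent $\lesssim e^{-\tau_{i_m}/2}X(\tau_{i_m})$ in the $e_1$-direction (after the rotation $S$, whose angle with the identity tends to zero). Since the blowdown is computed from any single time-slice and is independent of which one, and the flow is convex so extent in a fixed direction is monotone, a bound on the $e_1$-extent of one time-slice bounds it for all, hence $e_1\notin \check K$. Iterating over $i=1,\ldots,\ell$ gives $\check K\subseteq \bigcap_{i:\,q_{ii}<0}\{x_i=0\}=\ker Q$.

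I expect the main obstacle to be the passage from the $\mathcal H$-norm convergence of the normalized truncated function $\hat u$ to a \emph{pointwise, quantitative} lower bound for the inward bending at the \emph{growing} scale $r\sim\rho(\tau_{i_m})$: the convergence in Proposition \ref{L^2 convergence lemma} is only Gaussian-weighted $L^2$ on all of $\Gamma$, and the cutoff $\chi(r/\rho(\tau))$ and the error terms from Proposition \ref{Error hat.u-PDE 1} must be controlled out to scale $\rho(\tau)$. Here the inverse Poincar\'e inequality (Proposition \ref{Gaussian density analysis}) is the crucial tool: its second estimate says the mass of $u^2$ in the annulus $\{L/2\le r\le L\}$ is an $O(L^{-2})$ fraction of the mass in $\{r\le L/2\}$, which — together with standard parabolic interior estimates to upgrade $L^2$ control to $C^0$ control — lets one propagate the leading quadratic profile, with controlled relative error, from $O(1)$ scale out to scale $\rho(\tau_{i_m})$, so that the definite negativity $q_{11}<0$ survives at that scale. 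Once this quantitative ``the quadratic really is strictly concave at large scale'' statement is in hand, the Brunn--Minkowski / convexity argument of the last paragraph closes the proof, and the dimension count $\dim\check K\le \dim\ker Q\le \ell-1\le k-2$ (using $Q$ nontrivial and $\ell=k-1$) is immediate.
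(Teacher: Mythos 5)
Your overall strategy is in the same family as the paper's: both exploit Brunn's concavity principle for the $(n-\ell+1)$-th root of the cross-sectional area $\mathcal A(x',\tau)$ together with the $\mathcal H$-norm convergence of the normalized profile to $\mathcal Q$, and derive a contradiction from the hypothesis that some $\omega\in\check K$ lies outside $\ker Q$. However, there are two concrete gaps in the way you propose to run the argument.

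First, you try to establish a \emph{quantitative} pointwise decrease of $\mathcal A$ out to scale $\rho(\tau_{i_m})$, claiming that the inverse Poincar\'e inequality of Proposition \ref{Gaussian density analysis} together with parabolic estimates propagates the quadratic profile $\mathcal Q$ with controlled relative error to that scale. This does not work as stated: the inverse Poincar\'e inequality controls the \emph{size} of $u$ in dyadic annuli (the $L^2$ mass in $\{L/2\le r\le L\}$ is $O(L^{-2})$ of the inner mass), but it gives no information about the \emph{shape} of $u$ at scale $L$, so it cannot certify that the sign pattern $q_{11}<0$ is still visible there. In fact, the claimed bound $X(\tau_{i_m})\lesssim\rho(\tau_{i_m})$ for the first zero of $\mathcal A^{1/(n-\ell+1)}$ is almost certainly wrong: the decrease detected at bounded scale is of size $\sim\|\hat u\|_{\mathcal H}$, so by concavity the vanishing radius is of order $\|\hat u\|_{\mathcal H}^{-1}$, whereas $\rho\lesssim\|\hat u\|_{\mathcal H}^{-1/2}$ from \eqref{small_graph}; thus $X$ typically lies far beyond the graphical radius. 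Fortunately, this quantification is entirely unnecessary: all that is needed is that the zero occurs at \emph{some} finite $r_*$, and for this a strict decrease between two \emph{bounded} values of $r$ suffices, combined with concavity. That is precisely what the paper does: it integrates $\mathcal Q$ over boxes $[0,c]^{\ell}$ and $[d,d+c]\times[0,c]^{\ell-1}$ with $c=1/n$ and $d$ fixed (independent of $\tau$), uses \eqref{lim_Q_2} to transfer the sign change of these integrals to $\mathcal A^{1/(n-\ell+1)}$, and then invokes concavity only once. No control of $u$ beyond an $O(1)$ region is required.

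Second, your route back through the unrenormalized flow is more circuitous than needed and hides where the actual contradiction lives. You conclude that a single time-slice $M_{t_0-e^{-\tau_{i_m}}}$ has finite $e_1$-extent and then argue $e_1\notin\check K$; this is correct in principle (any finite extent of one time-slice in a given direction already excludes that direction from the cone), but the paper's argument is cleaner and stays in renormalized coordinates: for a fixed $\tau_{i_m}$ it constructs the explicit ray $r_m v_m + s_{1,m}w_{1,m}+\dots+\lambda S(\tau_{i_m})\omega$, which lies in $\tilde K_{\tau_{i_m}}$ for all $\lambda\ge 0$ because $\omega\in\check K$, and observes that the concave cross-sectional area along this ray must hit zero at finite radius, which the ray cannot survive. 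A further detail you skipped but which the paper handles explicitly is the rotation $S(\tau)$: since $S(\tau)\to I$, the angle $\sphericalangle(P(S(\tau)\omega),\ker Q)$ stays bounded below, so the choice $v_m = P(S(\tau_{i_m})\omega)/|P(S(\tau_{i_m})\omega)|$ keeps the box argument uniform in $m$. If you replace your scale-$\rho$ propagation step with the paper's bounded-box integral comparison and work with the ray in $\tilde K_\tau$ directly, the remainder of your outline becomes correct.
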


\begin{proof}
Let
\begin{equation}
\mathcal{Q}(x'):= {x'}Qx'^T-2\mathrm{tr}\, Q.
\end{equation}
Given a unit vector $v$ in the $x'$-plane, let $\{v, w_{1},\dots, w_{\ell-1}\}$ be an orthonormal basis of the $x'$-plane. Set $c=1/n$. Since $\mathcal{Q}(x')>0$ for $|x'|\leq c$, we have
\begin{equation}\label{Q_1_ineq}
\int_{0}^{c}\int_{[0,c]^{\ell-1}}\mathcal{Q}(rv+s_{1}w_{1}+\dots+s_{\ell-1}w_{\ell-1})\, drds>0.
\end{equation}
On the other hand, if $v\notin \ker Q$, then for $d=d(\sphericalangle(v,\ker Q))$ sufficiently large, we have
\begin{equation}\label{Q_2_ineq}
\int_{d}^{d+c} \int_{[0,c]^{\ell-1}}\mathcal{Q}(rv+s_{1}w_{1}+\dots+s_{\ell-1}w_{\ell-1})\, drds<0.
\end{equation}
Defining $\mathcal{A}$ as in \eqref{Area definition}, similarly as in the previous proof we have
\begin{align}\label{lim_Q_2}
\int_{a}^{b} \int_{[0,c]^{\ell-1}}\!\!\!\! \frac{{\scriptstyle \mathcal{A}^{\frac{1}{n-\ell+1}}(rv+s_{1}w_{1}+\dots+s_{\ell-1}w_{\ell-1}, \tau_{i_m})-  |B^{n-\ell+1}(\sqrt{2(n-\ell)})|^\frac{1}{n-\ell+1}}}{||\hat{u}(\cdot, \tau_{i_m})||_{\mathcal{H}}}drds\nonumber\\
\rightarrow c(n,\ell) \int_{a}^{b} \int_{[0,c]^{\ell-1}}\mathcal{Q}(rv+s_{1}w_{1}+\dots+s_{\ell-1}w_{\ell-1})drds
\end{align}
as $m\rightarrow \infty$. Combining \eqref{Q_1_ineq}, \eqref{Q_2_ineq} and \eqref{lim_Q_2} we see that for every $m$ sufficiently large, there exist $r_m,s_{1,m},\dots, s_{\ell-1,m}\in [0,1]$ such that
\begin{multline}\label{A_dec}
\quad \mathcal{A}(r_{m}v+s_{1,m}w_{1}+\dots+s_{\ell-1, ,m}w_{\ell-1},\tau_{i_m}) \\
> \mathcal{A}((r_{m}+d)v+s_{1,m}w_{1}+\dots+s_{\ell-1, ,m}w_{\ell-1},\tau_{i_m}).
\end{multline}
Now, suppose towards a contradiction there is some $\omega\in \check K\setminus \ker Q$. Since $S(\tau)\rightarrow I$ as $\tau\rightarrow -\infty$, for all $-\tau$ sufficiently large, we have
\begin{equation}
\sphericalangle (P(S(\tau)\omega),\ker Q)>\tfrac{1}{2}\sphericalangle(\omega,\ker Q),
\end{equation}
where $P$ denotes the projection to the $x'$-plane. Set
\begin{equation}
v_m:=\frac{P(S(\tau_{i_m})\omega)}{|P(S(\tau_{i_m})\omega)|}.
\end{equation}
Take $v=v_m$ in the previous discussion, and let $\{v_{m}, w_{1,m},\dots, w_{\ell-1,m}\}$ be an orthonromal basis of $x'$-plane. Note that
\begin{equation}\label{stays_insisde}
r_{m}v_{m}+s_{1,m}w_{1,m}+\dots+s_{\ell-1,m}w_{\ell-1,m}+\lambda S(\tau_{i_m})\omega \in \tilde{K}_{\tau_m}\, \forall \lambda \in [0,\infty).
 \end{equation}
On the other hand, by Brunn's concavity principle, the function
\begin{equation}
  r \mapsto  \mathcal{A}^{\frac{1}{n-l+1}}(rv_{m}+s_{1,m}w_{1,m}+\dots+s_{\ell-1,m}w_{\ell-1,m})
\end{equation}
is concave,  as long as it does not vanish. Together with \eqref{A_dec}, this implies that for all $m$ sufficiently large, the area of the cross sections is decreasing for $r>r_m$, and vanishes at some finite $r_\ast$. This contradicts \eqref{stays_insisde}, as the ray would have nowhere to go. This proves the theorem.
\end{proof}

\begin{corollary}
Assume the solution is noncompact and uniformly $3$-convex. Choosing coordinates such that $\check{K}=\{ \lambda e_1|\lambda\geq 0\}$, we have
\begin{equation}
\lim_{\tau \to -\infty}\frac{\hat u(\cdot,\tau)}{\|\hat u(\cdot,\tau)\|_{\mathcal{H}}}= -\frac{x_2^2-2}{\|x_2^2-2\|_{\mathcal{H}}}.
\end{equation}
Furthermore, given any $\delta>0$ for $\tau\ll 0$ the level-sets satisfy
\begin{equation}
\bar{M}_\tau \cap \{x_1=0\} \subseteq B_{\exp{(\delta |\tau|)}}(0).
\end{equation}
\end{corollary}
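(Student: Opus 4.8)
The plan is to specialize the analysis of the neutral mode to the uniformly $3$-convex case. Since the solution is noncompact and uniformly $3$-convex, the discussion in Section~\ref{sec_coarse} forces $\ell=k-1=2$, so the quadratic form $Q$ from Proposition~\ref{L^2 convergence lemma} is a nontrivial semi-negative definite $2\times 2$-matrix. By Theorem~\ref{thm_blowdown_neutral} we have $\check K\subseteq \ker Q$, and since the solution is noncompact and strictly convex, $\check K$ is a genuine halfline. Hence $\ker Q$ is one-dimensional, which means $Q$ has exactly one nonzero eigenvalue, which is negative. Choosing coordinates in the $x'$-plane so that $\check K=\{\lambda e_1:\lambda\geq 0\}$ aligns $\ker Q$ with the $x_1$-axis, so after possibly scaling we get $Q=\mathrm{diag}(0,-c)$ with $c>0$, and therefore $\mathcal{Q}(x')=-c(x_2^2-2)$. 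This identifies the $\mathcal{H}$-limit along any subsequence as $-(x_2^2-2)/\|x_2^2-2\|_{\mathcal H}$. Since the limit is the same for every subsequence, the full limit exists: $\lim_{\tau\to-\infty}\hat u(\cdot,\tau)/\|\hat u(\cdot,\tau)\|_{\mathcal H}=-(x_2^2-2)/\|x_2^2-2\|_{\mathcal H}$, which is the first assertion.

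For the diameter bound, the idea is that the limit above says the renormalized flow is bending inward quadratically in the $x_2$-direction at rate comparable to $\|\hat u(\cdot,\tau)\|_{\mathcal H}$, and by \eqref{rough decay} this quantity decays at most like $e^{\delta\tau}$ for any $\delta>0$, hence extremely slowly. Concretely, I would argue as follows. Fix $\delta>0$. Restrict attention to the slice $\{x_1=0\}$ and use the cross-sectional area function $\mathcal{A}(x',\tau)$ from \eqref{Area definition}, evaluated along the $x_2$-axis, i.e. $x'=(0,x_2)$. By Brunn's concavity principle, $x_2\mapsto \mathcal{A}^{1/(n-1)}(0,x_2,\tau)$ is concave and symmetric in $x_2$, hence nonincreasing in $|x_2|$; the level set $\bar M_\tau\cap\{x_1=0\}$ is contained in the ball of radius $R(\tau)$ where $R(\tau)$ is the first value of $x_2>0$ at which $\mathcal{A}^{1/(n-1)}(0,x_2,\tau)$ drops to zero. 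From the expansion \eqref{Area expansion} together with the convergence $\hat u(\cdot,\tau)/\|\hat u(\cdot,\tau)\|_{\mathcal H}\to -(x_2^2-2)/\|x_2^2-2\|_{\mathcal H}$ in $\mathcal H$ (upgraded to local smooth convergence using the interior estimates via \eqref{small_graph} together with standard parabolic regularity for the normalized flow), one sees that on the range $L_1\leq |x_2|\leq \rho(\tau)$ the quantity $\mathcal{A}^{1/(n-1)}(0,x_2,\tau)-|B^{n-1}(\sqrt{2(n-1)})|^{1/(n-1)}$ is, to leading order, a negative multiple of $\|\hat u(\cdot,\tau)\|_{\mathcal H}\,(x_2^2-2)$. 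Feeding in $\|\hat u(\cdot,\tau)\|_{\mathcal H}\geq U_0(\tau)^{1/2}\geq e^{\delta\tau/2}$ from \eqref{rough decay} (with $\delta$ there replaced by a smaller constant) shows that the concave profile, which starts above its asymptotic value by a fixed amount and bends down with "curvature" at least $\sim e^{\delta\tau/2}$, must reach zero by $|x_2|\leq e^{\delta|\tau|}$ for $-\tau$ large. Combined with the inner barriers from Corollary~\ref{lemma_inner_barrier} (using $\Gamma_a$ with $a\sim e^{\delta|\tau|}$) to make the cross-sectional collapse rigorous beyond the graphical region $|x_2|\leq\rho(\tau)$, this gives $\bar M_\tau\cap\{x_1=0\}\subseteq B_{\exp(\delta|\tau|)}(0)$.

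The main obstacle I anticipate is the transition from the $\mathcal H$-level (where all the spectral information lives) to a genuine $C^0$ or $C^2$ statement about the location of the slice $\bar M_\tau\cap\{x_1=0\}$: the $\mathcal H$-convergence only controls the graph on the truncated region $|r|\leq\rho(\tau)$, and a priori the slice could extend well beyond $\rho(\tau)$. This is precisely what the foliation and inner-barrier machinery of Section~\ref{sec_fol_barriers} is designed to handle — one uses the shrinkers $\Gamma_a$ as barriers from inside to propagate the quadratic inward bending (detected graphically near $\Gamma$) out to the region where the surface is no longer a small graph, forcing the cross-section to pinch off at the claimed scale. A secondary technical point is bookkeeping the two occurrences of $\delta$ (the one in \eqref{rough decay} versus the one in the statement): one takes the former much smaller than the latter, which is harmless since \eqref{rough decay} holds for every $\delta>0$.
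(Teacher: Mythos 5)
Your proof of the first assertion is essentially identical to the paper's one-line argument: since $\check K$ is a half\-line, $\ker Q$ contains a one-dimensional subspace, and since $Q$ is a nontrivial semi-negative definite $2\times 2$ matrix, $\ker Q$ is exactly one-dimensional; a rank-one semi-negative definite form with prescribed kernel (the $x_1$-axis) and prescribed $\mathcal H$-normalization is unique, hence the subsequential limit is unique and full convergence follows. This is correct.

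For the diameter bound, the paper only cites \cite[Corollary 5.4]{CHH_wing}, so you are reconstructing it blind; the skeleton you propose (Brunn concavity of $\mathcal A^{1/(n-1)}$ along the $x_2$-axis, leading-order identification of $\mathcal A^{1/(n-1)} - |B^{n-1}|^{1/(n-1)}$ as $-c\,\|\hat u\|_{\mathcal H}(x_2^2-2)$, lower bound $\|\hat u\|_{\mathcal H}\gtrsim e^{\delta\tau}$ from \eqref{rough decay}, hence collapse by $|x_2|\lesssim e^{\delta|\tau|}$) is the right mechanism. Two points need repair. First, you claim the profile is \emph{symmetric} in $x_2$ and hence nonincreasing in $|x_2|$; neither follows, since the solution has no assumed reflection symmetry in $x_2$, and concave functions are not monotone. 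The fix is cosmetic: by concavity the profile vanishes at some $-R^-(\tau)<0<R^+(\tau)$, and one bounds both $R^\pm$. Second, asserting the leading-order formula for $\mathcal A^{1/(n-1)}$ on the whole range $L_1\leq |x_2|\leq\rho(\tau)$ overreaches: $\mathcal H$-convergence together with interior parabolic estimates upgrades to (say) $C^1$ convergence of $u/\|\hat u\|_{\mathcal H}$ on any \emph{fixed} compact set, but not uniformly out to the growing radius $\rho(\tau)$. The argument should instead identify the profile and its negative slope $\sim -\|\hat u\|_{\mathcal H}$ on a fixed interval $[L_1,L_2]$, then invoke concavity of $\mathcal A^{1/(n-1)}$ (which holds on all of the convex body $\bar K_\tau$, graphical or not) to propagate that slope outward and conclude the profile vanishes by $|x_2|\lesssim L_2 + C\|\hat u(\cdot,\tau)\|_{\mathcal H}^{-1}\leq e^{\delta|\tau|}$. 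With that adjustment your appeal to the inner barriers of Corollary~\ref{lemma_inner_barrier} is not strictly needed for this step, since Brunn concavity already controls the non-graphical region, though it is the natural tool if one wants direct geometric control of the slice rather than of its cross-sectional areas.
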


\begin{proof}
Since a normalized semi-negative definite rank one $2\times 2$ quadratic form is uniquely determined by its kernel, we have convergence without the need of passing to a subsequence. Furthermore, arguing as in  \cite[Corollary 5.4]{CHH_wing} we get the bound for the diameter of the level sets.
\end{proof}

\bigskip

\section{Fine cylindrical analysis in the unstable mode}\label{sec_unstable}

In this section, we prove the fine cylindrical theorem for ancient noncollapsed flows whose tangent flow at $-\infty$ is $\mathbb{R}^{\ell}\times D^{n+1-\ell}(\sqrt{2(n-\ell)t})$. This is similar to the fine neck theorem in \cite{CHH,CHHW} and the fine bubble sheet theorem in \cite{CHH_wing}, so we will discuss this rather briefly.

Throughout  this section, we assume the tilted renormalized flow $\tilde{M}^{X_0}_{\tau}$ around some point $X_0$, has a dominant unstable mode, i.e.
\begin{equation}\label{plus_dom_tilt}
U_0+U_{-} \leq C\rho^{-1} U_{+}.
\end{equation}
Together with \eqref{U_PNM_system}, this implies $\frac{d}{d\tau} U_+(\tau) \geq (1-C\rho^{-1}) U_+$, hence
\begin{equation}\label{U_decay}
U=(1+o(1))U_+ \leq  Ce^{\frac{9}{10}\tau}.
\end{equation}
for sufficiently large $-\tau$. Therefore, Proposition \ref{Error hat.u-PDE 1} yields
\begin{equation}\label{S_decay}
|S^{X_0}(\tau)-I|\leq Ce^{\frac{9}{20}\tau}.
\end{equation}

We now consider the \emph{untilted} renormalized flow $\bar{M}^X_\tau$ around \emph{any} center $X$. For $\tau\leq \mathcal{T}(Z(X))$. we can write $\bar M^X_\tau$ as a graph of a function $\bar u(\cdot,\tau)$ over $\Gamma \cap B_{2\rho(\tau)}(0)$, where $\rho=\rho^X(\cdot,\tau)$ satisfies \eqref{univ_fns} and \eqref{small_graph}. We will work with the truncated function
\begin{equation}
\check u=\bar{u} \chi(r/\rho(\tau)).
\end{equation}
Moreover, we set
\begin{align}
&\check U_+(\tau) := \|P_+ \check{u}(\cdot,\tau)\|_{\mathcal{H}}^2, \nonumber\\
&\check U_0(\tau) := \|P_0 \check{u}(\cdot,\tau)\|_{\mathcal{H}}^2,\\
&\check U_-(\tau) := \|P_- \check{u}(\cdot,\tau)\|_{\mathcal{H}}^2. \nonumber
\end{align}
Applying the Merle-Zaag ODE lemma \cite{MZ,ChoiMant}, we infer that there are universal constants $C,R<\infty$, such that for every $X$ either
\begin{equation}\label{vac_alter}
\check U_{+}+\check U_{-}=o(\check U_0),
\end{equation}
or
\begin{equation}\label{plus_untilt_dom}
\check U_0+\check U_{-} \leq C\rho^{-1} \check U_{+}
\end{equation}
whenever $\rho\geq R$.
\begin{lemma}[dominant mode]\label{lemma_dominant} The unstable mode is dominant for the untilted renormalized flow  $\bar{M}^{X}_{\tau}$ with any center $X$, i.e. \eqref{plus_untilt_dom} holds for all $X$.
\end{lemma}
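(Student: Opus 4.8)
The statement we need is that for the untilted renormalized flow $\bar{M}^X_\tau$ around \emph{any} center $X$, the vacuous (neutral) alternative \eqref{vac_alter} cannot occur, so that \eqref{plus_untilt_dom} holds. The standing hypothesis of this section is that there \emph{exists} one point $X_0$ whose tilted flow $\tilde{M}^{X_0}_\tau$ has dominant unstable mode; we must propagate this to every center $X$. The plan is to argue by contradiction: suppose \eqref{vac_alter} holds for some center $X_1$. First I would note that the tilting rotation affects only the $\{x_ix_J\}$-part of $\mathcal{H}_0$ and does not change the dichotomy between unstable and neutral dominance; hence at $X_1$ the \emph{tilted} flow $\tilde{M}^{X_1}_\tau$ also has dominant neutral mode. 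Then I would invoke the analysis of Section \ref{sec_neutral_mode}: by Proposition \ref{L^2 convergence lemma} applied at $X_1$, there is a nontrivial semi-negative definite $\ell\times\ell$ matrix $Q$ governing the blowup of $\hat u$ at $X_1$, and by Theorem \ref{thm_blowdown_neutral} we get $\check K\subseteq \ker Q$, so in particular $\dim\check K\leq \ell-1$. But simultaneously, the dominant-unstable behavior at $X_0$ should force $\dim \check K = \ell$ (this is exactly the scenario that Section \ref{sec_unstable_end} is designed to rule out — here we only need the dimension count, not the full contradiction). Reconciling these is the crux.

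The cleaner route, which I would actually take, avoids referencing $\dim\check K$ and instead exploits that the Merle–Zaag alternative is a \emph{global} statement once one knows it at a single scale/point through a continuity or openness argument. Concretely: the quantity deciding the alternative is $\lim_{\tau\to-\infty} \check U_-(\tau)/\check U_+(\tau)$ (together with $\check U_0/\check U_+$), and by the inverse-Poincaré estimate (Proposition \ref{Gaussian density analysis}) and the evolution equation (Proposition \ref{Error hat.u-PDE 1}) the graph function, hence all three projections, depend continuously on the center $X$ (after the normalization $Z(X)\leq 1$, with the convergence on $B_{2\rho(\tau)}$ uniform in $X$ in a compact family). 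The set of centers where the unstable mode is dominant is therefore both open and closed among centers of bounded cylindrical scale, and it is nonempty by hypothesis at $X_0$; connectedness of $\mathcal M$ then gives dominance everywhere. I would spell this out using the standard fact that parabolic regularity plus Huisken monotonicity make $X\mapsto \bar M^X_\tau$ continuous in $C^{\lfloor 1/\varepsilon\rfloor}_{\mathrm{loc}}$, and that the rescaled/truncated $\check u^X$ varies continuously in $\mathcal H$.

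The main obstacle I anticipate is making the openness/closedness argument uniform in $\tau\to-\infty$: the alternative is an asymptotic statement, and a priori the "switching time" beyond which \eqref{plus_untilt_dom} holds could depend on $X$ in an uncontrolled way. To handle this I would record that the Merle–Zaag constants $C,R$ in Proposition \ref{mz.ode.fine.neck} are universal (they come only from the spectral gaps $\tfrac12$ and $\tfrac{1}{n-\ell}$ and the error bound $C\rho^{-1}$), so the dichotomy is governed by a \emph{quantitative} threshold: once $\check U_-+\check U_0 \leq \tfrac12 C\rho^{-1}\check U_+$ holds at one time with $\rho$ large, the ODE system \eqref{U_PNM_system} forces it for all earlier times. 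Thus it suffices to verify the inequality at a single finite time $\tau$ with $\rho^X(\tau)\geq R$, which by the continuity above holds on an open-and-closed set of centers. Alternatively — and this is the fallback if the continuity bookkeeping becomes unwieldy — one argues directly: if \eqref{vac_alter} held at some $X_1$, then as in \cite[Section 5]{CHH_wing} the neutral-mode analysis would force $\bar M^{X_1}_\tau$ to have inward quadratic bending in some $x'$-direction, contradicting the linear-in-$x'$ growth that dominant-unstable behavior at $X_0$ propagates via the coarse geometry of $\mathcal M$ (convexity and the fixed blowdown $\check K$). Either way, the lemma follows; I expect the continuity-in-center argument to be the shortest and I would present that one, deferring the heavier quantitative estimates to the already-cited analogues in \cite{CHH,CHHW,CHH_wing}.
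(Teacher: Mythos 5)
Your proposal does not match the paper's argument, and both of the routes you sketch have genuine gaps.

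The most serious problem is in your first route, where you assert that ``the dominant-unstable behavior at $X_0$ should force $\dim\check K=\ell$.'' This is backwards: Section~\ref{sec_unstable_end} proves that dominant unstable mode together with $\dim\check K=\ell$ yields a contradiction, i.e.\ under the standing hypotheses the unstable regime \emph{forces} $\dim\check K\leq\ell-1$, not $\dim\check K=\ell$. So there is nothing to ``reconcile'' -- both Theorem~\ref{thm_blowdown_neutral} and the unstable-mode analysis point towards $\dim\check K\leq \ell-1$, and the contradiction you hope for never materializes. A second, quieter gap in the same route is the step ``the tilting rotation does not change the dichotomy, hence the tilted flow at $X_1$ also has dominant neutral mode.'' This is not free: transferring the Merle--Zaag dichotomy between $\bar M^{X}_\tau$ and $\tilde M^{X}_\tau = S^X(\tau)\bar M^X_\tau$ requires a quantitative rate at which $S^X(\tau)\to I$, and that rate is exactly what the paper's decay estimate \eqref{S_decay} supplies. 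In the direction you need (from a hypothetical neutral-dominant \emph{untilted} flow to the tilted one) no such a priori rate is available, since \eqref{S_decay} was derived assuming the unstable mode dominates.

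Your ``cleaner'' continuity/open-and-closed route is also genuinely different from the paper's. The paper's argument is short and direct: (i) in the unstable-dominant regime one has the explicit decays \eqref{U_decay} and \eqref{S_decay}, which together give $\check U\leq C e^{\frac{9}{20}\tau}$ for the \emph{untilted} flow at $X_0$; since in the neutral regime $\check U\geq e^{\delta\tau}$ for every $\delta>0$ (cf.\ \eqref{rough decay}), the neutral alternative is impossible at $X_0$; (ii) for any other center $X$, the renormalized flows $\bar M^X_\tau$ and $\bar M^{X_0}_\tau$ merge as $\tau\to-\infty$ (the shift $e^{\tau/2}(x_0-x_0')$ vanishes), so they satisfy the same asymptotic alternative. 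Your openness/closedness scheme bypasses both (i) and (ii): it never establishes the tilted-to-untilted transfer at $X_0$ (you only assume dominance is a property ``of the center'' without distinguishing tilted from untilted), and the quantitative-threshold patch you invoke to make the asymptotic alternative ``local in $\tau$'' is not spelled out and is not obviously how the Merle--Zaag lemma works. If you want to keep a contradiction-shaped argument, the right move is to follow the paper: use the exponential decay forced by the unstable regime and observe that this is incompatible with the sub-exponential decay \eqref{rough decay} forced by the neutral regime, then invoke the merging of the renormalized flows to pass to arbitrary centers.
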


\begin{proof}
Let us first show that the statement holds for $X=X_0$. Combining \eqref{U_decay} and \eqref{S_decay}, we infer that the untilted flow satisfies
\begin{equation}\label{untilt_decay}
\check  U \leq Ce^{\frac{9}{20}\tau}.
\end{equation}
If we had $\check U_{+}+\check U_{-}=o(\check U_0)$, then arguing as at the beginning of Section \ref{sec_neutral_mode}  we would see that $\check U\geq e^{\delta \tau}$ for every $\delta>0$ and $-\tau$ sufficiently large, contradicting \eqref{untilt_decay}. Thus, for $X=X_0$, we indeed get \eqref{plus_untilt_dom}.
Finally, since any neck centered at a general point $X$ merges with the neck centered at $X_0$ as $\tau\to -\infty$, the inequality \eqref{plus_untilt_dom} holds for every $X$.
\end{proof}

\begin{proposition}[improved graphical radius]\label{rough barrier}
There exists some $\mathcal{T}=\mathcal{T}(Z(X))>-\infty$, such that
\begin{equation}
\bar \rho(\tau)=e^{-\frac{1}{9}\tau}
\end{equation}
is a graphical radius function satisfying  \eqref{univ_fns} and \eqref{small_graph} for $\tau \leq \mathcal{T}$.
\end{proposition}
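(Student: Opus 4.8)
First, $\bar\rho(\tau)=e^{-\tau/9}$ trivially satisfies \eqref{univ_fns}: $e^{-\tau/9}\to\infty$ as $\tau\to-\infty$, $\bar\rho'(\tau)=-\tfrac19 e^{-\tau/9}<0$, and $-\bar\rho(\tau)=-e^{-\tau/9}\le-\tfrac19 e^{-\tau/9}=\bar\rho'(\tau)$. The substance is the graphical estimate \eqref{small_graph}, and the plan is to derive it by upgrading the a priori exponential decay implied by Lemma \ref{lemma_dominant}, following the bootstrap of the fine neck theorem in \cite{CHH,CHHW} and of the fine bubble-sheet theorem in \cite{CHH_wing}. By Lemma \ref{lemma_dominant} the unstable mode is dominant for $\bar M^X_\tau$, so \eqref{plus_untilt_dom} holds; combining this with the Merle-Zaag ODE (and, for $X=X_0$, with \eqref{U_decay} and \eqref{S_decay}) one gets $\|\bar u(\cdot,\tau)\|_{\mathcal H}\le Ce^{9\tau/20}$ on the region where the flow is already known to be graphical. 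After the recentering in the $\mathbb{R}^{n+1-\ell}$-plane, the $\mathcal H_+$-component of $\bar u$ is the linear function $\sum_{i\le\ell}a_i(\tau)x_i$ with $|a(\tau)|\le Ce^{9\tau/20}$, while by \eqref{plus_untilt_dom} the $\mathcal H_0$- and $\mathcal H_-$-components are a further factor $\rho^{-1}$ smaller. Since all the relevant eigenfunctions are polynomials with controlled coefficients, these $\mathcal H$-bounds together with the interior parabolic estimates for renormalized mean curvature flow (whose curvature bounds come from noncollapsedness) yield, on the current graphical region, pointwise bounds $\|\bar u(\cdot,\tau)\|_{C^4(\Gamma\cap B_R)}\le Ce^{9\tau/20}(1+R)$. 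Evaluated at $R=2\bar\rho(\tau)$, the right-hand side is $\le Ce^{9\tau/20}\bar\rho(\tau)$, and since $e^{9\tau/20}\bar\rho(\tau)^3=Ce^{7\tau/60}\to 0$, this lies below $\bar\rho(\tau)^{-2}$ for $\tau\ll 0$. Thus, once graphicality over $B_{2\bar\rho(\tau)}$ is available, \eqref{small_graph} follows with room to spare, and the real content is to establish graphicality on this larger scale.

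For the latter I would run the barrier argument with the $\mathrm{O}(\ell)\times\mathrm{O}(n+1-\ell)$-symmetric foliation of Definition \ref{new_fol_def}. By Corollary \ref{lemma_inner_barrier} the compact leaves $\Gamma_a$ are inner barriers for the renormalized flow, while the leaves $\tilde\Gamma_b$ — supplemented, if needed, by large round cylinders, which bound the strictly convex flow from outside since its cross-sections are bounded — serve as outer barriers; both families converge to $\Gamma$ in $C^4_{\mathrm{loc}}$ as $a\to\infty$, $b\to L_0$, and by Lemma \ref{foli_lemma} they stay extremely close to $\Gamma$ throughout the range $L_1\le r\lesssim a$. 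The key point is that, by virtue of the \emph{uniform} exponential decay above (as opposed to mere qualitative convergence to the cylinder), there is a threshold $\mathcal T=\mathcal T(Z(X))$, depending only on an upper bound for the cylindrical scale, such that for every $\tau\le\mathcal T$ the region enclosed by $\bar M^X_\tau$ contains $\Gamma_a$ and lies inside the outer barrier, for all $a$ up to order $e^{-\tau/9}$. The maximum principle — Corollary \ref{lemma_inner_barrier} and its outer analogue, applied to suitably truncated competitors to handle the noncompactness of the flow, as in \cite[Section 5]{CHH_wing} — then traps $\bar M^X_\tau$ between $\Gamma_a$ and the outer barrier for all subsequent times and for $L_1\le r\le 2e^{-\tau/9}$; since these leaves are mutually $C^1$-close there, $\bar M^X_\tau$ is a graph over $\Gamma\cap\{L_1\le r\le 2e^{-\tau/9}\}$ with small $C^1$ norm. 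Graphicality for $r\le L_1$ follows separately from the local convergence $\bar M^X_\tau\to\Gamma$ ($L_1$ being a fixed constant), the interior parabolic estimates promote $C^1$ control to $C^4$, and re-running the $\mathcal H$-estimate of the first paragraph with the enlarged graphical radius gives the quantitative bound $\bar\rho(\tau)^{-2}$. Together this yields \eqref{small_graph} for $\bar\rho(\tau)=e^{-\tau/9}$ and $\tau\le\mathcal T(Z(X))$.

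The hard part is this barrier step, specifically certifying that the enclosed region contains $\Gamma_a$ out to $a\sim e^{-\tau/9}$: this amounts to matching the exponential decay rate of the graph function against the (slower) rate at which the foliation leaves $\Gamma_a$ peel off from the cylinder, and it is precisely this interplay that pins down the exponent $\tfrac19$ — which is smaller than what the pointwise decay in the first paragraph would by itself permit. One also has to verify that everything holds uniformly in the center $X$, which is where the merging of necks centered at different points as $\tau\to-\infty$ — used already at the end of the proof of Lemma \ref{lemma_dominant} — enters once more.
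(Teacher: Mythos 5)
Your plan matches the paper's, which runs the barrier argument of \cite[Proposition 6.4]{CHH_wing} (see also \cite{CHH,CHHW}) with the higher-dimensional foliation $\Gamma_a,\tilde\Gamma_b$ from Definition~\ref{new_fol_def} and Corollary~\ref{lemma_inner_barrier}; you correctly identify the crux as trapping $\bar M^X_\tau$ between inner and outer barrier leaves out to $r\sim e^{-\tau/9}$, with the exponent governed by the rate at which the shrinkers peel away from the cylinder, and you correctly note that outer barriers (the $\tilde\Gamma_b$ plus large cylinders) must be supplied in addition to Corollary~\ref{lemma_inner_barrier}.

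One step in your first paragraph is not valid, however. The pointwise bound $\|\bar u(\cdot,\tau)\|_{C^4(\Gamma\cap B_R)}\le Ce^{9\tau/20}(1+R)$ does not follow from the $\mathcal{H}$-estimate: since $\mathcal{H}$ carries the Gaussian weight $e^{-|x|^2/4}$, converting $\|\bar u\|_{\mathcal H}\lesssim e^{9\tau/20}$ into an unweighted bound on $B_R$ costs a factor of order $e^{R^2/8}$, and at $R\sim 2e^{-\tau/9}$ this factor overwhelms the exponential decay. The linear-growth form you wrote reflects only the $\mathcal H_+$-eigenfunctions, not the full graph function, so the claim ``\eqref{small_graph} follows with room to spare'' is not available by this route even conditionally on graphicality. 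This does not damage your overall outline, because the $C^0$-smallness at scale $\bar\rho(\tau)$ has to come (and, in your second paragraph, does come) from the barrier trapping itself: once $\bar M^X_\tau$ is sandwiched between $\Gamma_a$ and the outer barrier on $L_1\le r\lesssim a$, these leaves are mutually $C^1$-close to $\Gamma$, and interior parabolic estimates for the graph equation upgrade this to the required $C^4$ bound $\le\bar\rho(\tau)^{-2}$. The role of the $\mathcal H$-estimate is only to control the graph at the fixed core scale $r\lesssim L_1$, where the Gaussian weight is harmless, which is what launches the barrier comparison; I would simply drop the $(1+R)$ claim.
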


\begin{proof}
The proof is similar to the one of \cite[Proposition 6.4]{CHH_wing} (see also \cite{CHH,CHHW}), with the only difference that we now use the higher dimensional barriers from Definition \ref{new_fol_def} and Corollary \ref{lemma_inner_barrier}.
\end{proof}

From now on, we work with $\rho=\bar{\rho}$ from Proposition \ref{rough barrier} (improved graphical radius), and in particular, we define $\check u, \check U_0, \check U_\pm, \ldots$ with respect to this improved graphical radius.  Note that the unstable mode is still dominant.

\begin{proposition}[sharp decay estimate]\label{sharp C^10 estimate}
There exist constants $C<\infty$ and $\mathcal{T}>-\infty$, depending only on an upper bound for $Z(X)$, such that for $\tau\leq\mathcal{T}$:
\begin{equation}\label{sharp_hnorm}
\|\check u(\cdot,\tau)\|_{\mathcal{H}} \leq Ce^{\tau/2},
\end{equation}
and
\begin{equation}\label{sharp_c10est}
\|\check u(\cdot,\tau)\|_{C^{n}(\{r\leq 100\})}\leq C e^{\tau/2}.
\end{equation}
\end{proposition}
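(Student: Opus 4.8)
The plan is to run the standard ODE-comparison/bootstrap argument that appears in the fine neck theorem of \cite{CHH,CHHW} and the fine bubble sheet theorem of \cite{CHH_wing}, now over the higher-dimensional cylinder $\Gamma=\mathbb{R}^\ell\times S^{n-\ell}(\sqrt{2(n-\ell)})$. The starting point is that the unstable mode is dominant for the untilted flow (Lemma \ref{lemma_dominant}), together with the improved graphical radius $\bar\rho(\tau)=e^{-\tau/9}$ from Proposition \ref{rough barrier}; in particular the error estimate of Proposition \ref{Error hat.u-PDE 1} now reads $\|\hat E\|_{\mathcal H}\leq Ce^{\tau/9}(\|\check u\|_{\mathcal H}+|A(\tau)|)$ and $|A(\tau)|\leq Ce^{\tau/9}\|\check u\|_{\mathcal H}$, so the coefficient $\rho^{-1}$ is now exponentially small. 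First I would revisit the Merle-Zaag ODE system \eqref{U_PNM_system} with this improved error: since $\check U_0+\check U_-\leq C\rho^{-1}\check U_+=Ce^{\tau/9}\check U_+$, the differential inequality for $\check U_+$ becomes $\tfrac{d}{d\tau}\check U_+\geq (1-Ce^{\tau/9})\check U_+$, which integrates to $\check U_+(\tau)\leq Ce^{\tau}$; since $\check U=(1+o(1))\check U_+$ this already gives the sharp Hilbert-space bound $\|\check u(\cdot,\tau)\|_{\mathcal H}\leq Ce^{\tau/2}$, which is \eqref{sharp_hnorm}. (One must check that the multiplicative constant $C$ and the threshold $\mathcal T$ depend only on an upper bound for $Z(X)$; this follows because all the inputs — the graphical radius, the validity of the graphical representation, the error estimates — kick in after a time $\mathcal T(Z(X))$ that is monotone in $Z(X)$, exactly as in \cite[Section 6]{CHH_wing}.)

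Next I would upgrade the integral decay \eqref{sharp_hnorm} to the pointwise $C^n$ decay \eqref{sharp_c10est} on the fixed region $\{r\leq 100\}$. This is a parabolic interior-estimate argument: $\check u$ (equivalently $\bar u$, since $\chi\equiv 1$ on $\{r\leq 100\}$ once $\rho(\tau)$ is large) satisfies the quasilinear parabolic equation of the renormalized mean curvature flow graph over $\Gamma$, whose linearization is $\mathcal L$ plus lower-order terms that are quadratically small in $\check u$ and its derivatives. On the region $\{r\leq 200\}\times[\tau-1,\tau]$ one has, by \eqref{sharp_hnorm} and the fact that the Gaussian weight is bounded above and below there, an $L^2$ bound $\|\check u\|_{L^2(\{r\leq 200\}\times[\tau-1,\tau])}\leq Ce^{\tau/2}$; combined with the a priori $C^4$ smallness \eqref{small_graph} this lets one treat the equation as uniformly parabolic with controlled coefficients, and standard interior Schauder/$L^p$ estimates iterated $n$ times upgrade the $L^2$ bound to $\|\check u(\cdot,\tau)\|_{C^n(\{r\leq 100\})}\leq Ce^{\tau/2}$, absorbing the nonlinear terms since they are higher order in the already-small quantity $\check u$. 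This is precisely the mechanism of \cite[Proposition 6.6 / Lemma 6.7]{CHH_wing} and \cite{CHH,CHHW}, and the only change is the product geometry of $\Gamma$, which does not affect interior parabolic estimates.

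The main obstacle, as in the earlier papers, is the coupling between the decay of $\check u$ and the decay of the rotation parameter $S^{X}(\tau)$: a priori the fine-tuning rotation could spoil the sharp rate. Here, though, the argument of Lemma \ref{lemma_dominant} has already decoupled the two — the untilted flow around an arbitrary center $X$ is shown to have dominant unstable mode by a soft argument (comparing necks centered at $X$ and at $X_0$, and ruling out neutral dominance via the rough decay \eqref{rough decay} contradicting \eqref{untilt_decay}), so one works directly with the untilted $\check u$ and no further rotation correction enters \eqref{sharp_hnorm}. Thus the genuine technical point that needs care is the uniformity of the constants $C,\mathcal T$ in terms of an upper bound on $Z(X)$: one must track that every estimate invoked — Proposition \ref{Gaussian density analysis}, Proposition \ref{Error hat.u-PDE 1}, Proposition \ref{rough barrier}, and the parabolic interior estimates — holds past a threshold time that is a nonincreasing function of $Z(X)$ and with constants depending only on that bound. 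Granting this bookkeeping, which is routine given the corresponding statements in \cite{CHH_wing}, the proposition follows.
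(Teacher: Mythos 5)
Your proof is correct and follows essentially the same strategy as the paper: exploit the dominance of $\check U_+$ together with the exponentially small error coefficient $\rho^{-1}=e^{\tau/9}$ from Proposition \ref{rough barrier} to integrate the ODE for $\check U_+$ down to the sharp rate $\check U_+\leq Ce^{\tau}$, and then upgrade to $C^n$ via the parabolic interior estimates of \cite[Appendix A]{CHH_wing}. The only cosmetic difference is that you integrate $\tfrac{d}{d\tau}\log\check U_+\geq 1-Ce^{\tau/9}$ in one step, whereas the paper first establishes the crude bound $\check U_+\leq Ce^{9\tau/10}$ (from $\tfrac{d}{d\tau}(e^{-9\tau/10}\check U_+)\geq 0$ together with $\|\check u(\cdot,\mathcal T)\|_{\mathcal{H}}\leq 1$) and then feeds that into $\tfrac{d}{d\tau}(e^{-\tau}\check U_+)\geq -Ce^{-\tau}\rho^{-1}\check U_+$ to get an integrable right-hand side; both routes give the same conclusion with constants depending only on $Z(X)$.
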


\begin{proof}
Since $\bar M^X_{\mathcal{T}}$ is an $\varepsilon$-graph over $\Gamma\cap B_{2\rho^X}(0)$, we get $\|\check u(\cdot,\mathcal{T})\|_{\mathcal{H}}\leq   1 $.
Together with the evolution inequality $\tfrac{d}{d\tau} (e^{-\frac{9}{10}\tau}\check U_+) \geq 0$, this implies
\begin{equation}\label{refined L^2 estimate}
\|\check u(\cdot,\tau)\|_{\mathcal{H}} \leq C e^{\frac{9}{20}\tau}
\end{equation}
for $\tau\leq\mathcal{T}$, where $C<\infty$ and $\mathcal{T}>-\infty$ only depend on $Z(X)$.

Now, combining the evolution inequality $\tfrac{d}{d\tau}\check{U}_+\geq (1-C\rho^{-1})\check{U}_+$ with the rough estimate \eqref{refined L^2 estimate} and Proposition \ref{rough barrier} (improved graphical radius) yields
\begin{equation}
\frac{d}{d\tau}\left(e^{-\tau}\check U_+\right) \geq -Ce^{-\tau+\frac{1}{9}\tau+\frac{9}{10}\tau}=-Ce^{\frac{1}{90}\tau}
\end{equation}
for all $\tau\leq \mathcal{T}$.  Thus, $\check U_{+} \leq Ce^{\tau}$. This proves \eqref{sharp_hnorm}. Finally, \eqref{sharp_c10est} follows from the parabolic estimates in \cite[Appendix A]{CHH_wing}.
\end{proof}

\begin{theorem}[fine cylindrical theorem] \label{thm Neck asymptotic} If $\{M_t\}$ is an ancient noncollapsed flow in $\mathbb{R}^{n+1}$, whose  tangent flow at $-\infty$ is  $\mathbb{R}^{\ell}\times S^{n-\ell}(\sqrt{2(n-\ell)t})$, and whose unstable mode is dominant, then there exist constants $a_1,\ldots, a_{n+1}$ with $a_1^2+\ldots +a_\ell^2>0$ with the following significance. For every center point $X=(x_0^1,\ldots x_0^{n+1},t_0)$, the truncated graphical function $\check u^X(\cdot,\tau)$ of the renormalized flow $\bar{M}_\tau^X$
satisfies the fine cylindrical expansion estimates
\begin{equation}\label{expansion 3}
    \Big\|\check{u}^{X}-e^{\frac{\tau}{2}}\Big(\sum_{i=1}^{\ell}a_{i}x_{i}+\sum_{J=\ell+1}^{n+1}a_{J}^Xx_{J}\Big)\Big\|_{\mathcal{H}}\leq Ce^{\frac{5}{9}\tau},
\end{equation}
and
\begin{equation}\label{expansion 4}
    \Big\|\check{u}^{X}-e^{\frac{\tau}{2}}\Big(\sum_{i=1}^{\ell}a_{i}x_{i}+\sum_{J=\ell+1}^{n+1}a_{J}^Xx_{J}\Big)\Big\|_{L^{\infty}(\{r\leq 100\})}\leq Ce^{\frac{19}{36}\tau}
\end{equation}
for $\tau\leq \mathcal{T}$, where $a_J^X=a_J-\frac{1}{\sqrt{2(n-\ell)}}x^{J}_{0}$ for $J\geq \ell+1$,  and where $C<\infty$ and $\mathcal{T}>-\infty$ only depend on an upper bound for the cylindrical scale $Z(X)$.
\end{theorem}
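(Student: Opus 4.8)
The plan is to follow the now-standard strategy for fine neck and fine bubble sheet theorems from \cite{CHH,CHHW,CHH_wing}: extract an asymptotic expansion of $\check u^X$ at a single center from the sharp decay estimate together with an ODE analysis of its low-frequency coefficients, then compare different center points to see that the leading coefficients along the axis directions are independent of $X$, and finally use a rigidity argument to see that this coefficient vector does not vanish. Concretely, fix a center $X$; by Lemma~\ref{lemma_dominant} the unstable mode is dominant so \eqref{plus_untilt_dom} holds, and by Proposition~\ref{sharp C^10 estimate} we have $\|\check u^X(\cdot,\tau)\|_{\mathcal H}\le Ce^{\tau/2}$ with graphical radius $\rho=e^{-\tau/9}$. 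Writing $P_+\check u^X=\beta_0^X(\tau)+\sum_{i=1}^{n+1}\beta_i^X(\tau)\,x_i$, and recalling that $1$ is an $\mathcal L$-eigenfunction of eigenvalue $1$ while $x_1,\dots,x_{n+1}$ are eigenfunctions of eigenvalue $\tfrac12$, I would project the evolution equation of Proposition~\ref{Error hat.u-PDE 1} onto these functions. Using $\|\hat E\|_{\mathcal H}\le C\rho^{-1}\|\check u^X\|_{\mathcal H}\le Ce^{11\tau/18}$ this gives $\tfrac{d}{d\tau}\beta_0^X=\beta_0^X+O(e^{11\tau/18})$ and $\tfrac{d}{d\tau}\beta_i^X=\tfrac12\beta_i^X+O(e^{11\tau/18})$; integrating, $\beta_0^X=O(e^{11\tau/18})$ and $e^{-\tau/2}\beta_i^X(\tau)$ converges as $\tau\to-\infty$ to a finite limit $a_i^X$ with $|\beta_i^X(\tau)-a_i^Xe^{\tau/2}|\le Ce^{11\tau/18}$. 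Combining with $\|P_0\check u^X\|_{\mathcal H}+\|P_-\check u^X\|_{\mathcal H}\le(C\rho^{-1}\check U_+)^{1/2}\le Ce^{5\tau/9}$ and $11/18>5/9$ yields \eqref{expansion 3} with $a_i^X$ in place of $a_i$, and interpolating this against the $C^n$-bound \eqref{sharp_c10est} (as in \cite[Appendix A]{CHH_wing}) gives the pointwise version \eqref{expansion 4}; all constants depend only on an upper bound for $Z(X)$, since this is already so in Propositions~\ref{rough barrier} and~\ref{sharp C^10 estimate}.

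\textbf{Center-independence.} Next I would show that the $a_i^X$ depend on $X$ only through the stated recentering. For two centers $X=(x,t_0)$, $X'=(x',t_0)$ at the same time one has $\bar M^{X'}_\tau=\bar M^X_\tau-e^{\tau/2}(x'-x)$; since the cylinder $\Gamma$ is invariant under translations along its $\mathbb R^\ell$-axis, whereas a translation by $-e^{\tau/2}w''$ in the $\mathbb R^{n+1-\ell}$-factor changes the graph over $\Gamma$ by $-\tfrac{1}{\sqrt{2(n-\ell)}}\langle w'',x''\rangle e^{\tau/2}+O(e^\tau)$, we get $\bar u^{X'}=\bar u^X-\tfrac{1}{\sqrt{2(n-\ell)}}\sum_{J>\ell}(x'^J-x^J)x_J\,e^{\tau/2}+O(e^\tau)$. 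Comparing with the expansion of the previous paragraph forces $a_i^{X'}=a_i^X$ for $i\le\ell$ and $a_J^{X'}+\tfrac{x'^J}{\sqrt{2(n-\ell)}}=a_J^X+\tfrac{x^J}{\sqrt{2(n-\ell)}}$ for $J>\ell$. A pure time shift $(x,t_0)\mapsto(x,t_0')$ merely reparametrizes renormalized time by $\tau'=\tau+O(e^\tau)$, so $e^{-\tau/2}\beta_i^{X'}(\tau)=(1+O(e^\tau))e^{-\tau/2}\beta_i^X(\tau+O(e^\tau))$ has the same limit, leaving all $a_i^X$ unchanged. Therefore $a_i:=a_i^X$ for $i\le\ell$ and $a_J:=a_J^X+\tfrac{x_0^J}{\sqrt{2(n-\ell)}}$ for $J>\ell$ are genuinely independent of $X$, which is exactly the formula $a_J^X=a_J-\tfrac{x_0^J}{\sqrt{2(n-\ell)}}$ in the statement.

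\textbf{Non-triviality.} It remains to prove $a_1^2+\dots+a_\ell^2>0$. Suppose not; then, recentering in the $\mathbb R^{n+1-\ell}$-plane (choosing the center's spherical coordinates $x_0^J=\sqrt{2(n-\ell)}\,a_J$, which keeps the center in the cylindrical region since $\mathcal M$ is asymptotic to $\Gamma$), the previous two paragraphs force $a_i^X=0$ for all $i$ and hence $\|\check u^X(\cdot,\tau)\|_{\mathcal H}=o(e^{\tau/2})$. On the other hand, by \eqref{plus_untilt_dom} and Proposition~\ref{Error hat.u-PDE 1}, on the set where $\check U_+>0$ one has $\tfrac{d}{d\tau}\log\check U_+=1+O(\rho^{-1})$ with $\int_{-\infty}^{\mathcal T}\rho^{-1}<\infty$; hence either $\check U_+>0$ for all $\tau\ll0$, in which case $\check U_+(\tau)\asymp e^\tau$ and thus $\|\check u^X\|_{\mathcal H}\asymp e^{\tau/2}$, contradicting the $o(e^{\tau/2})$ bound, or $\check U_+$ vanishes identically near $-\infty$, in which case dominance forces $\check u^X\equiv0$ there, so $M_t$ coincides with a round shrinking cylinder, contrary to our standing assumption that $\mathcal M$ is not self-similarly shrinking. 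This contradiction establishes $a_1^2+\dots+a_\ell^2>0$ and completes the proof.

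\textbf{Main obstacle.} Since the argument is structurally parallel to \cite{CHH,CHHW,CHH_wing}, the genuinely delicate point is not the single-center expansion, which is routine bookkeeping once the higher-dimensional barriers, the spectrum of $\mathcal L$, and the Merle--Zaag alternative are in hand, but rather the center-independence step: one must verify that the \emph{same} vector $(a_1,\dots,a_\ell)$ controls the expansion at \emph{every} space-time center and identify the exact center-dependence of the remaining coefficients, which forces one to track carefully how spatial and temporal translations act on the renormalized flow and its graph function while keeping every error strictly below the threshold $e^{\tau/2}$. This uniformity is precisely what upgrades a pointwise asymptotic expansion into the global ``fine cylindrical'' statement used in Section~\ref{sec_unstable_end} to rule out the unstable-dominant case; the rigidity input in the non-triviality step (that $\mathcal M$ is not a round shrinking cylinder) is a close second source of difficulty.
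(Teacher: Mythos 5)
Your overall strategy is the same as the paper's: extract the single-center expansion by projecting the evolution equation onto the low eigenmodes and integrating the resulting ODEs, compare centers to pin down the $X$-dependence, then argue non-triviality by contradiction. The single-center estimates (\eqref{expansion 3}, \eqref{expansion 4}) and the center-comparison match the paper's proof closely (the paper phrases the center comparison via the geometric distance formula, but your translation-of-the-graph computation is the same calculation, and you correctly handle the harmless axial translation and the time shift).

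However, the non-triviality step as you have sketched it contains a genuine gap, and is the one place where the paper defers to \cite[Section 6.4]{CHH_wing} and \cite[Lemma 5.11]{ADS} for a more substantial argument. First, the claim $\tfrac{d}{d\tau}\log\check U_+=1+O(\rho^{-1})$ is not correct: $\mathcal H_+$ is spanned by $1$ (eigenvalue $1$) together with $x_1,\dots,x_{n+1}$ (eigenvalue $\tfrac12$), so $\langle P_+\check u,\mathcal L P_+\check u\rangle_{\mathcal H}$ lies between $\tfrac12\check U_+$ and $\check U_+$, and all the ODE gives is $\tfrac{d}{d\tau}\log\check U_+\in[1,2]+O(\rho^{-1})$, hence only the lower bound $\check U_+\gtrsim e^{2\tau}$. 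Second, even if one separates out the $x_i$-subspace (on which the eigenvalue is exactly $\tfrac12$) and uses that $a_0=O(e^{11\tau/18})$ is subdominant, the resulting ODE for $f(\tau):=e^{-\tau}\sum_i a_i(\tau)^2\|x_i\|^2$ reads $\tfrac{d}{d\tau}f^{1/2}=O(e^{\tau/9})$, which only shows $f^{1/2}$ converges as $\tau\to-\infty$; the limit being zero (i.e.\ all $\bar a_i=0$) is entirely consistent with the ODE and gives $\check U_+=O(e^{11\tau/9})$, with no contradiction. In other words, the vanishing case does not self-destruct by ODE considerations alone. The actual rigidity in \cite{CHH_wing,ADS} requires an extra mechanism: one bootstraps the improved decay through the coefficient estimates (each pass improves the exponent by $\tfrac19$), and then uses the foliation/shrinker barriers (here, Lemma~\ref{foli_lemma} and Corollary~\ref{lemma_inner_barrier}) to trap $\bar M^X_\tau$ between inner and outer barriers that pinch down to $\Gamma$, forcing $\mathcal M$ to be the round shrinking cylinder, which is excluded. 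Your sketch omits the barrier step, so as written it does not establish $a_1^2+\dots+a_\ell^2>0$.

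Finally, one smaller but worth-flagging point: your dismissal of the ``$\check U_+$ vanishes identically'' branch via ``dominance forces $\check u^X\equiv0$ there, so $M_t$ coincides with a round shrinking cylinder'' is also not quite automatic -- $\check u^X$ is a truncated graph over $\Gamma\cap\{r\le\rho/2\}$, so $\check u^X\equiv0$ for $\tau\ll0$ only gives exact agreement with the cylinder on growing but bounded regions, and upgrading this to global equality requires a (backward) uniqueness argument; this is also part of what the cited references supply.
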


\begin{proof}
In the following $C<\infty$ and $\mathcal{T}>-\infty$ denote constants that might change from line to line, but only depend on an upper bound for the cylindrical scale $Z(X)$.
Recalling the basis of $\mathcal{H}_+$ from \eqref{basis_hplus}, we can write
\begin{align}\label{def P_+check u}
P_+\check u^X=a_0^X(\tau)+\sum_{i=1}^{n+1}a_i^X(\tau)x_i,
\end{align}
where the superscript indicates (a priori) dependence on the center $X$.

Letting $\check E:=(\partial_\tau -\mathcal{L})\check u$, and using $\mathcal L \,1 = 1$, we compute
\begin{align}
\frac{d}{d\tau} a_0^X(\tau)
=a_0^X(\tau)+ {e^{\frac{n-\ell}{2}}(4\pi)^{\frac{-\ell}{2}}}
|S^{n-\ell}(\sqrt{2(n-\ell)})|^{-1} \int_\Gamma\check{E} e^{-\frac{|x|^2}{4}}\, .
\end{align}
Hence, using Proposition \ref{Error hat.u-PDE 1}, Proposition \ref{rough barrier} and Proposition \ref{sharp C^10 estimate}, we obtain
\begin{align}
\Big|\frac{d}{d\tau}\left(e^{-\tau} a_0^X(\tau)\right) \Big| \leq Ce^{-\tau}\|\check E\|_{\mathcal{H}} \leq Ce^{-\tau+\frac{\tau}{2}+\frac{\tau}{9}}
\leq Ce^{-\frac{7}{18}\tau}.
\end{align}
Integrating this from $\tau$ to $\mathcal{T}$ implies
\begin{equation}\label{coeff_est1}
|a_0^X(\tau) |\leq Ce^{\tfrac{11}{18}\tau}\qquad (\tau\leq\mathcal{T}).
\end{equation}
In a similar manner, using $\mathcal{L} x_i=\frac{1}{2}x_i$ for $i=1,\ldots, n+1$, we get
\begin{align}
\Big|\frac{d}{d\tau}\left(e^{-\frac{\tau}{2}} a_i^X(\tau)\right) \Big| \leq Ce^{-\frac{\tau}{2}}\|\check E\|_{\mathcal{H}} \leq Ce^{\frac{\tau}{9}},
\end{align}
so integrating from $-\infty$ to $\tau$ yields
\begin{equation}\label{coeff_est2}
\sum_{i=1}^{n+1}\left|e^{-\tfrac{\tau}{2}} a_i^X(\tau)-\bar{a}_i^X\right| \leq Ce^{\frac{1}{9}\tau} \qquad (\tau\leq\mathcal{T}),
\end{equation}
where
\begin{equation}
\bar{a}_i^X=\lim_{\tau\to -\infty} e^{-\tau/2}a_i^X(\tau)\qquad   (i=1,\ldots, n+1).
\end{equation}

Now, consider the difference
\begin{equation}
D^X:=\check{u}^X-e^{\frac{\tau}{2}}\sum_{i=1}^{n+1}\bar{a}_{i}^Xx_{i}.
\end{equation}
Using  the above, we see that
\begin{equation}\label{point_ineq_D}
|D^X|\leq |\check{u}^X - P_{+}\check{u}^X| +  C (|x|+1) e^{\tfrac{11}{18}\tau}.
\end{equation}
Together with the inequality $\check U_0+\check U_- \leq Ce^{\frac{10}{9}\tau}$, which follows from Lemma \ref{lemma_dominant}, Proposition \ref{rough barrier} and Proposition \ref{sharp C^10 estimate}, this yields
\begin{equation}
\| D^X \|_{\mathcal{H}}\leq Ce^{\frac{5}{9}\tau},
\end{equation}
which proves \eqref{expansion 3} modulo the claim about the coefficients.\\

Next, we observe that
\begin{align}
\|D^X \|_{L^2( \{ r\leq 100\})} \leq C\| D^X \|_{\mathcal{H}} \leq Ce^{\frac{5}{9}\tau},
\end{align}
and, using Proposition \ref{sharp C^10 estimate}, that
\begin{align}
\|\nabla^{n} D^X \|_{L^2( \{ r\leq 100\})}
\leq C\| \hat{u}^X \|_{C^{n}( \{ r\leq 100\})}+Ce^{\frac{\tau}{2}}  \leq Ce^{\frac{\tau}{2}}.
\end{align}
Applying Agmon's inequality, this yields
\begin{equation}\label{agmon_est}
\|D^X\|_{L^{\infty}( \{ r\leq 100\})}\leq C\|D^X\|_{L^2( \{ r\leq 100\})}^{\frac{1}{2}}\|D^X\|_{H^{n}( \{ r\leq 100\})}^{\frac{1}{2}}\leq C e^{\tfrac{19}{36}\tau}.
\end{equation}

Now, comparing the renormalized flows with center $X=(x_0^1,\ldots,x_0^{n+1},t_0)$ and center $0$, we need to relate the parameters of the functions $\check{u}^{X}$ and $\check{u}^{0}$. Since it is easy to see that the parameters do not depend on $t_0$, we may choose $t_0=0$. We then have
\begin{multline}
\sqrt{2(n-\ell)}+\check{u}^{X}(x'-x_0'e^{\tau/2},x''+O(e^{\tau/2}),\tau)\\
=\mathrm{dist}\left(\left(\sqrt{2(n-\ell)}+\check{u}^{0}(x',x'',\tau)\right)\frac{x''}{|x''|},e^{\tau/2}x_0''\right),
\end{multline}
where we use the notation $x'=(x_1,\ldots, x_\ell)$ and $x''=(x_{\ell+1},\ldots, x_{n+1})$ as before.
By Taylor expansion and Proposition \ref{sharp C^10 estimate}, we have
\begin{multline}
\mathrm{dist}\left(\left(\sqrt{2(n-\ell)}+\check{u}^{0}(x',x'',\tau)\right)\frac{x''}{|x''|},e^{\tau/2}x_0''\right)\\
=\sqrt{2(n-\ell)}+\check{u}^{0}(x', x'',\tau)-e^{\tau/2}  \frac{x''\cdot x''_{0}}{|x''|}+o(e^{\tau/2}).
\end{multline}
Together with \eqref{agmon_est}, the above formulas imply  that  $\bar{a}^{X}_{i}=\bar{a}^{0}_{i}$ for $1\leq i\leq \ell$, and $\bar{a}^{X}_{J}=\bar{a}^{0}_{J}-\frac{1}{\sqrt{2(n-\ell)}}x^{J}_{0}$
for $\ell+1\leq J\leq n+1$.

Finally, by the same contradiction argument as in \cite[Section 6.4]{CHH_wing} (see also \cite[Lemma 5.11]{ADS}) we obtain
\begin{equation}\label{non-vanishing}
a_1^2+\ldots + a_\ell^2>0.
\end{equation}
This finishes the proof of the fine cylindrical theorem.
\end{proof}

\bigskip

\section{Conclusion of the proof}\label{sec_unstable_end}

In this final section, we conclude the proof of our main theorem:

\begin{theorem}\label{thm_main_restated}
Let $M_t=\partial K_t\subset\mathbb{R}^{n+1}$ be an ancient noncollapsed mean curvature flow that does not split off a line. Asume that the flow is uniformly $k$-convex for some $k\geq 3$. Then the blowdown of any time slice is at most $k-2$ dimensional, namely
\begin{equation}
\dim \check{K}_{t_0}\leq k-2.
\end{equation}
\end{theorem}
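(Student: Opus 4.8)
The plan is to run the case analysis prepared in Sections \ref{Sec_set_up}--\ref{sec_unstable}. By the reduction in Section \ref{sec_coarse}, after relabelling coordinates the tangent flow at $-\infty$ is $\mathbb{R}^{\ell}\times S^{n-\ell}(\sqrt{2(n-\ell)t})$ for some $\ell\in\{1,\dots,k-1\}$, one has $\check K\subset\mathbb{R}^{\ell}\times\{0\}$, and $\check K$ contains no line since the flow does not split off a line. If $\ell\le k-2$ the bound $\dim\check K\le k-2$ is immediate, so I may assume $\ell=k-1$. Now apply the Merle--Zaag alternative (Proposition \ref{mz.ode.fine.neck}) to the tilted renormalized flow. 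If the neutral mode is dominant, then Theorem \ref{thm_blowdown_neutral} gives $\check K\subseteq\ker Q$ for a nontrivial negative-semidefinite $\ell\times\ell$ matrix $Q$; since $Q\ne 0$ this yields $\dim\check K\le\ell-1=k-2$, as desired.

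It remains to treat the case in which the unstable mode is dominant, and here I would argue by contradiction, assuming $\dim\check K=\ell=k-1$, so that $\check K$ is a full-dimensional, line-free closed convex cone in $\mathbb{R}^{\ell}\times\{0\}$. Since such a cone is not a half-space, its Gauss image is not a single point, so I can choose two rays of differentiability $R^{\pm}\subset\partial\check K$, with unit generators $\omega^{\pm}$, whose outward unit normals $\nu^{\pm}$ are distinct; because $\check K$ is full-dimensional and line-free they are automatically not antiparallel either. Pick points $p^{\pm}_{j}\in M_{t_0}$ with $p^{\pm}_{j}/|p^{\pm}_{j}|\to\omega^{\pm}$ and $|p^{\pm}_{j}|\to\infty$, and pass to a subsequential limit $\hat M^{\pm}_{t}=\partial\hat K^{\pm}_{t}=\lim_{j}(M_{t}-p^{\pm}_{j})$, again an ancient noncollapsed flow in $\mathbb{R}^{n+1}$. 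Since $K_{t}$ is asymptotic to $\check K$ and $R^{\pm}$ is a differentiability ray of the $\ell$-dimensional cone $\check K$, the blowdown of $\hat K^{\pm}_{t}$ is the tangent cone of $\check K$ along $R^{\pm}$, namely the half-space $H^{\pm}=\big((\nu^{\pm})^{\perp}\cap(\mathbb{R}^{\ell}\times\{0\})\big)\oplus\mathbb{R}_{\le 0}\nu^{\pm}$. As $H^{\pm}$ contains the $(\ell-1)=(k-2)$-dimensional subspace $(\nu^{\pm})^{\perp}\cap(\mathbb{R}^{\ell}\times\{0\})$ and no larger linear subspace, the flow $\hat K^{\pm}_{t}$ splits off exactly $k-2$ lines; modding these out leaves an ancient noncollapsed, strictly convex, uniformly $2$-convex flow in $\mathbb{R}^{n+3-k}$ whose blowdown is the half-line $\mathbb{R}_{\le 0}\nu^{\pm}$ (note $k\le n$, so $n+3-k\ge 3$). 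By the classification of ancient noncollapsed $2$-convex flows \cite{ADS,ADS2,BC,BC2}, the only such flow is the $(n+2-k)$-dimensional rotationally symmetric bowl soliton, so $\hat K^{\pm}_{t}=\mathrm{Bowl}_{n+2-k}\times\mathbb{R}^{k-2}$ with translation direction $-\nu^{\pm}$.

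Finally I would derive a contradiction from the fine cylindrical theorem (Theorem \ref{thm Neck asymptotic}). The vector $(a_{1},\dots,a_{\ell})$ governing the $x'$-directions of the fine cylindrical expansion is the same for every space-time center and satisfies $a_{1}^{2}+\dots+a_{\ell}^{2}>0$, and it is unchanged under the translations $M_{t}\mapsto M_{t}-p^{\pm}_{j}$ since these preserve the cylinder axis $\mathbb{R}^{\ell}\times\{0\}$. Passing the fine cylindrical expansions of the centers $p^{\pm}_{j}$ to the limit, $\hat M^{\pm}_{t}$ must carry the \emph{same} fine cylindrical vector $(a_{1},\dots,a_{\ell})$. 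On the other hand, computing the fine neck expansion of $\mathrm{Bowl}_{n+2-k}\times\mathbb{R}^{k-2}$ shows that its leading $x'$-term is a nonzero multiple of the bowl axis $\nu^{\pm}$. Hence $(a_{1},\dots,a_{\ell})$ would be a nonzero vector proportional to both $\nu^{+}$ and $\nu^{-}$, contradicting $\nu^{+}\ne\pm\nu^{-}$. Therefore $\dim\check K\le k-2$ in the unstable case as well, which completes the proof.

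I expect the main obstacle to be the middle step: making rigorous the passage to the limits $\hat M^{\pm}_{t}$, proving that they split off \emph{exactly} $k-2$ lines — where both the differentiability of the rays $R^{\pm}$ and the full $\ell$-dimensionality of $\check K$ are used — invoking the Brendle--Choi classification \cite{BC2} to identify them as bowls times planes, and, most delicately, matching the bowl axes to the single universal fine cylindrical vector $(a_{1},\dots,a_{\ell})$ of Theorem \ref{thm Neck asymptotic} in order to extract the contradiction from $\nu^{+}\ne\nu^{-}$. The Brunn--Minkowski/Brunn-concavity input is already packaged in Theorem \ref{thm_blowdown_neutral}, so the neutral case requires essentially no further work.
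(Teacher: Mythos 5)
Your proposal follows the same overall strategy as the paper: reduce to $\ell=k-1$, invoke the Merle--Zaag alternative, dispatch the dominant-neutral case via Theorem~\ref{thm_blowdown_neutral}, and in the dominant-unstable case pass to translated limits along two rays of differentiability of $\partial\check K$, identify them as bowls times $(k-2)$-planes via the Brendle--Choi classification, and derive a contradiction from the universality of the fine cylindrical vector $(a_1,\dots,a_\ell)$.

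The one genuine gap you have not flagged is the uniform bound on the cylindrical scale $Z(p^\pm_j)$. The fine cylindrical theorem only gives constants $C,\mathcal T$ depending on an upper bound for $Z(X)$, so in order to pass the expansion to the limit $\hat M^\pm_t$ (and hence to conclude that the limit inherits the same vector $(a_1,\dots,a_\ell)$), one must first show $\sup_j Z(p^\pm_j)<\infty$. The paper proves this as a separate claim by a second contradiction argument: if $Z(p^\pm_j)\to\infty$, rescale by $Z(p^\pm_j)^{-1}$ and pass to a limit, which is neither a round shrinking cylinder (by the definition of cylindrical scale and the fact that $\check K$ is a proper subcone) nor dominant-neutral (that would contradict \eqref{rough decay}), and whose fine cylindrical vector would have to be a rescaled-to-zero copy of $(a_1,\dots,a_\ell)$, contradicting its nonvanishing. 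Without this step your ``passing the fine cylindrical expansions of the centers $p^\pm_j$ to the limit'' is not justified.

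Two smaller remarks. First, you package the splitting step via ``the blowdown of $\hat K^\pm_t$ is the tangent cone of $\check K$ along $R^\pm$''; the paper instead directly exhibits $(\ell-1)$ independent lines in $\hat M^{\pm}_0$ using convexity of $K_0$ together with the second-order Taylor expansion $q+\lambda w+|\lambda|^{3/2}\nu\in\operatorname{Int}\check K$ at the twice-differentiable ray. Your route is equivalent in spirit but the blowdown-of-a-limit statement would itself need an argument of exactly that kind, so it does not save work. Second, the paper does not merely choose two rays of differentiability: it finds a whole $2$-plane $P$ through the $x_1$-axis on which every nonzero boundary point of $\check K$ is twice differentiable (via Alexandrov and a Hausdorff-measure count over the $(\ell-2)$-parameter family of such planes), and then takes $x^\pm_h$ as the $x_2$-extremal points in $K_0\cap\{x_1=h\}\cap P$. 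This specific construction guarantees that $h^{-1}x^\pm_h$ converges to a twice-differentiable boundary point; your ad hoc choice of $p^\pm_j$ with $p^\pm_j/|p^\pm_j|\to\omega^\pm$ is fine, but in either variant the Alexandrov step has to be spelled out.
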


\begin{proof} Let $M_t=\partial K_t\subset\mathbb{R}^{n+1}$ be an ancient noncollapsed uniformly $k$-convex mean curvature flow that does not split off a line.
By the reduction from Section \ref{sec_coarse}, we can assume that
\begin{equation}\label{neck_tangent}
\lim_{\lambda \rightarrow 0} \lambda M_{\lambda^{-2}t}=\mathbb{R}^{\ell}\times S^{n-\ell}(\sqrt{2(n-\ell)t}),
\end{equation}
where $\ell=k-1$. Considering the expansion of the renormalized flow over the cylinder $\Gamma=\mathbb{R}^{\ell}\times S^{n-\ell}(\sqrt{2(n-\ell)})$, by Proposition \ref{mz.ode.fine.neck} (Merle-Zaag alternative) and Lemma \ref{lemma_dominant} (dominant mode), for $\tau\to -\infty$, either the neutral mode dominates for the tilted flow, or the unstable mode dominates for the untilted flow with respect to any center. In the dominant neutral mode case, we have shown in Theorem \ref{thm_blowdown_neutral} that the blowdown is at most $k-2$ dimensional. We can thus assume that we are in the dominant unstable mode case. Since we already know that $\dim \check K\leq \ell$, and since the dimension of convex sets is always an integer, it suffices to rule out the scenario that $\dim \check K= \ell=k-1$.

Suppose towards a contradiction that $\dim \check K= \ell=k-1$. Recall that $\check K$ does not contain any line and also recall that $k\geq 3$. Choosing suitable coordinates, we can assume that $\check{K}\setminus\{0\}$ is contained in $(\mathbb{R}^{\ell}\times \{0 \})\cap \{x_1> 0\}$ and contains the positive $x_1$-axis in its interior.

\begin{claim}[points of differentiability]\label{claim_differentiable}
There exists a two-dimensional plane $P\subseteq \mathbb{R}^\ell$ containing the $x_1$-axis, such that all points $0\neq p\in \partial \check K \cap P$ are twice differentiable points of $\partial \check K\subseteq \mathbb{R}^\ell$.
\end{claim}

\begin{proof}[{Proof of Claim \ref{claim_differentiable}}]
Observe first that since $\check K$ is a cone, whenever there is some point $0\neq p\in \partial \check K \cap P$ that is not twice differentiable, then $\partial\check K \cap P$ in fact contains a $1$-dimensional ray of points that are not twice differentiable.

Now suppose towards a contradiction that for every two-plane $P\subseteq \mathbb{R}^\ell$ containing the $x_1$-axis, there is some point $0\neq p\in \partial \check K \cap P$ that is not twice differentiable. Since the space of two-planes containing the $x_1$-axis is $(\ell-2)$-dimensional, together with the above it follows that the set of points in $\partial \check K$ that are not twice differentiable has positive $(\ell-1)$-dimensional Hausdorff measure. This contradicts Alexandrov's theorem.
\end{proof}

Rotating coordinates, we can assume that $P$ is the $x_1x_2$-plane. Observe that $\partial \check K\cap P$ consists of two rays $R^\pm$ with directional vectors $e_{\pm}$ satisfying $e_{+}\cdot e_{2}>0$ and $e_{-}\cdot e_{2}<0$, respectively.
By a space-time translation, we may also assume that $t_0=0$ and that $0\in M_0$ is the point in $M_0$ with smallest $x_1$-value. Now, for every $h>0$, let $x_h^{\pm}\in M_0\cap \{x_1=h\}\cap P$ be a point which maximizes/minimizes the value of $x_2$ in $K_0\cap \{x_1=h\}\cap P$.

\begin{claim}[cylindrical scale]\label{cyl_scale_bd}
There exists some constant $C<\infty$ such that
\begin{equation}
\sup_h Z(x^{\pm}_h)\leq C.
\end{equation}
\end{claim}

\begin{proof}[{Proof of the Claim \ref{cyl_scale_bd}}]
We will argue similarly as in the proofs of \cite[Claim 7.2]{CHH_wing}, \cite[Proposition 5.8]{CHH} and \cite[Proposition 6.2]{CHHW}.\\
Suppose towards a contradiction that $Z(x^{\pm}_{h_i})\to \infty$ for some sequence $h_i\to \infty$. Let $\mathcal{M}^i$ be the sequence of flows obtained by shifting $x^{\pm}_{h_i}$ to the origin, and parabolically rescaling by $Z(x^{\pm}_{h_i})^{-1}$. By \cite[Thm. 1.14]{HaslhoferKleiner_meanconvex} we can pass to a subsequential limit $\mathcal{M}^\infty$, which is an ancient noncollapsed flow that is weakly convex and smooth until it becomes extinct. Note also that $\mathcal{M}^\infty$ has an $\mathbb{R}^\ell \times S^{n-\ell}$ tangent flow at $-\infty$.

Note that $\mathcal{M}^\infty$ cannot be a round shrinking $\mathbb{R}^\ell \times S^{n-\ell}$. Indeed, if such a cylinder became extinct at time $0$, this would contradict the definition of the cylindrical scale, and if it became extinct at some later time, this would contradict the fact that $K^\infty_0\cap P$ is a strict subset of $P$ by construction.

Thus, by Proposition \ref{mz.ode.fine.neck} (Merle-Zaag alternative), for the flow $\mathcal{M}^\infty$, either the neutral mode is dominant or the unstable mode is dominant.
If the neutral mode is dominant, then for large $i$ we obtain a contradiction with the fact that $\mathcal{M}^i$ has dominant unstable mode, using in particular equation \eqref{rough decay}.
If the unstable mode is dominant, then by the fine cylindrical theorem (Theorem \ref{thm Neck asymptotic}), the limit $\mathcal{M}^\infty$ has some nonvanishing fine cylindrical vector $(a_1^\infty,\ldots, a_\ell^\infty)$. However, this contradicts the fact that the fine cylindrical vector of $\mathcal{M}^i$ is obtained from the fine cylindrical vector $(a_1,\ldots,a_\ell)$ of $\mathcal{M}$ by scaling by  $Z(x^{+}_{h_i})^{-1}\to 0$. This concludes the proof of the claim.
\end{proof}

Take $h_i\to \infty$ and consider the sequence $\mathcal{M}^{i,\pm}:=\mathcal{M}-(x_{h_i}^{\pm},0)$. By Claim \ref{cyl_scale_bd} (cylindrical scale), any subsequential limit $\mathcal{M}^{\infty,\pm}$ is an ancient noncollapsed flow with an $\mathbb{R}^\ell \times S^{n-\ell}$ tangent flow at $-\infty$. Moreover, arguing as in the proof of the claim, we see that $\mathcal{M}^{\infty,\pm}$ has a dominant unstable mode, with the same fine cylindrical vector $(a_1,\ldots , a_\ell)$ as our original flow $\mathcal{M}$.

\begin{claim}[splitting off lines]\label{split}
The hypersurfaces $M_0^{\infty,+}$ and $M_0^{\infty,-}$ contain $(\ell-1)$-dimensional planes $P^+\neq P^-$.
\end{claim}

\begin{proof}[{Proof of Claim \ref{split}}]
First observe that since $
{x_{h_i}^\pm }/  {\| x_{h_i}^\pm\|} \to e_{\pm} \in R^\pm$, the hypersurfaces
$M_0^{\infty,+}$ and $M_0^{\infty,-}$ contain a line in direction $e_+\neq e_{-}$. Thus, it remains to find $\ell-2$ lines in nonradial directions.

By the definition of the blowdown, we have the Hausdorff convergence
\begin{equation}
    h_{i}^{-1}(K\cap \{x_{1}=h_{i}\})\rightarrow \check{K}\cap\{x_{1}=1\}.
\end{equation}
This implies
\begin{equation}
 h_{i}^{-1}x^{\pm}_{h_{i}}\rightarrow q^{\pm}\in \partial \check{K}\cap P\cap \{ x_1=1\},
\end{equation}
where $q^{\pm}$ is a twice differentiable point of $\partial \check K$ by Claim \ref{claim_differentiable}.

Now, let $\nu$ be the inwards unit normal of $\check K\cap \{x_1=1\}\subset \mathbb{R}^{\ell-1}$ at $q=q^\pm$, and consider any unit tangent vector $w\in T_{q}(\partial\check K\cap \{x_1=1\})$. Since $\check K$ is twice differentiable at $q$, we have
\begin{equation}
q_\lambda:=q+\lambda w + |\lambda|^{3/2} \nu \in \textrm{Int}(\check K)
\end{equation}
for $0<|\lambda|<\lambda_0$ small enough. By convexity, the segments from $q$ to $q_\lambda$ and $q_{-\lambda}$ are contained in $\check K$. Note also that these segments have length at least $\lambda$ and meet at angle $\pi-\eps(\lambda)$, where $\eps\to 0$ as $\lambda\to 0$. Together with the Hausdorff convergence from above, we see that for $i$ large enough the segments connecting $x_{h_i}$ to $h_iq_{\lambda}$ and $h_iq_{-\lambda}$ are contained in $K$, have length at least $ h_i \lambda$ and meet at angle $\pi-\eps(\lambda)$. Taking $h_i\to \infty$, and sending $\lambda=\lambda_i\to 0$ slowly enough, we conclude that $M_0^{\infty,\pm}$ contains a line in direction $w$. Since $w$ was arbitrary, this completes the proof of the claim.
\end{proof}

By Claim \ref{split} (splitting off lines), we get $M_t^{\infty,\pm}=P^{\pm}\times N_t^{\pm}$, where $N_t^\pm$ is an ancient noncollapsed uniformly $2$-convex flow. Using the classification by Brendle-Choi \cite{BC2}, we infer that $N_t^\pm$ must be a translating bowl soliton. By inspection, we see that the fine cylindrical vector of $P^{\pm}\times N_t^{\pm}$ points in the direction of translation. However, since $N_t^{+}$ and $N_t^{-}$ translate in different directions, this contradicts the fact that $\mathcal{M}^{\infty,+}$ and $\mathcal{M}^{\infty,-}$ have the same fine cylindrical vector. This concludes the proof of the theorem.
\end{proof}

\bigskip

\bibliography{blowdown}

\bibliographystyle{alpha}

\vspace{10mm}

{\sc Wenkui du, Department of Mathematics, University of Toronto,  40 St George Street, Toronto, ON M5S 2E4, Canada}\\

{\sc Robert Haslhofer, Department of Mathematics, University of Toronto,  40 St George Street, Toronto, ON M5S 2E4, Canada}\\

\emph{E-mail:} wenkui.du@mail.utoronto.ca, roberth@math.toronto.edu

\end{document}